\numberwithin{equation}{section} 
\newenvironment{pdeq}{ \left\{ \begin{aligned}}{\end{aligned}\right.}
\newcommand{\np}[1]{(#1)}
\newcommand{\nb}[1]{[#1]}
\newcommand{\bp}[1]{\big(#1\big)}
\newcommand{\bb}[1]{\big[#1\big]}
\newcommand{\Bp}[1]{\bigg(#1\bigg)}
\newcommand{\Bb}[1]{\bigg[#1\bigg]}
\newcommand{\calj}{{\mathcal J}}
\newcommand{\calp}{{\mathcal P}}
\newcommand{\calt}{{\mathcal T}}
\newcommand{\R}{\mathbb{R}}
\newcommand{\Z}{\mathbb{Z}}
\newcommand{\CNumbers}{\mathbb{C}}
\newcommand{\N}{\mathbb{N}}
\DeclareMathOperator{\e}{e}
\DeclareMathOperator{\B}{B}
\DeclareMathOperator{\id}{Id}
\DeclareMathOperator{\Div}{div}
\DeclareMathOperator{\supp}{supp}
\DeclareMathOperator*{\esssup}{ess\,sup}
\newcommand{\embeds}{\hookrightarrow}
\newcommand{\ra}{\rightarrow}
\newcommand{\set}[1]{\ensuremath{\{#1\}}}
\newcommand{\setc}[2]{\ensuremath{\{#1\ \lvert\ #2\}}}
\newcommand{\setcl}[2]{\ensuremath{\bigl\{#1\ \lvert\ #2\bigr\}}}
\newcommand{\closure}[2]{\overline{#1}^{#2}}
\newcommand{\proj}{\calp}
\newcommand{\projsymbol}{\kappa_0}
\newcommand{\projcompl}{\calp_\bot}
\newcommand{\hproj}{\calp_H}
\newcommand{\quotientmap}{\pi}
\newcommand{\grp}{G}
\newcommand{\dualgrp}{\widehat{G}}
\newcommand{\idmatrix}{I}
\newcommand{\grad}{\nabla}
\newcommand{\gradmap}{\mathrm{grad}}
\newcommand{\dx}{{\mathrm d}x}
\newcommand{\dg}{{\mathrm d}g}
\newcommand{\ds}{{\mathrm d}s}
\newcommand{\dt}{{\mathrm d}t}
\newcommand{\dxi}{{\mathrm d}\xi}
\newcommand{\SR}{\mathscr{S}}
\newcommand{\TDR}{\mathscr{S^\prime}}
\newcommand{\linf}[2]{\langle #1, #2\rangle}
\newcommand{\ft}[1]{\widehat{#1}}
\newcommand{\ift}[1]{#1^\vee}
\newcommand{\FT}{\mathscr{F}}
\newcommand{\iFT}{\mathscr{F}^{-1}}
\newcommand{\norm}[1]{\lVert#1\rVert}
\newcommand{\oseennorm}[2]{\norm{#1}_{#2,\mathrm{Oseen}}}
\newcommand{\normL}[1]{\Bigl\lVert#1\Bigr\rVert}
\newcommand{\snorm}[1]{{\lvert #1 \rvert}}
\newcommand{\snorml}[1]{{\bigl\lvert #1 \big\rvert}}
\newcommand{\snormL}[1]{{\Bigl\lvert #1 \Big\rvert}}
\newcommand{\WSR}[2]{W^{#1,#2}} 
\newcommand{\DSR}[2]{D^{#1,#2}} 
\newcommand{\WSRloc}[2]{W^{#1,#2}_{loc}} 
\newcommand{\CR}[1]{C^{#1}}  
\newcommand{\LR}[1]{L^{#1}}
\newcommand{\LRloc}[1]{L^{#1}_{loc}} 
\newcommand{\CRi}{\CR \infty}
\newcommand{\CRci}{\CR \infty_0}
\newcommand{\CRc}[1]{\CR{#1}_0}
\newcommand{\LRsigma}[1]{L^{#1}_{\sigma}} 
\newcommand{\gradspace}[1]{\mathscr{G}^{#1}}
\newcommand{\DSRNsigma}[2]{D^{#1,#2}_{0,\sigma}} 
\newcommand{\WSRsigma}[2]{W^{#1,#2}_{\sigma}} 
\newcommand{\WSRsigmacompl}[2]{W^{#1,#2}_{\sigma,\bot}} 
\newcommand{\LRsigmacompl}[1]{L^{#1}_{\sigma,\bot}}
\newcommand{\CRcisigma}{\CR{\infty}_{0,\sigma}}
\newcommand{\LRper}[1]{L^{#1}_{\mathrm{per}}}
\newcommand{\WSRper}[2]{W^{#1,#2}_{\mathrm{per}}} 
\newcommand{\CRper}{\CR{\infty}_{\mathrm{per}}}
\newcommand{\CRciper}{\CR{\infty}_{0,\mathrm{per}}}
\newcommand{\CRcisigmaper}{\CR{\infty}_{0,\sigma,\mathrm{per}}}
\newcommand{\WSRsigmaper}[2]{W^{#1,#2}_{\sigma,\mathrm{per}}} 
\newcommand{\WSRsigmapercompl}[2]{W^{#1,#2}_{\sigma,\mathrm{per},\bot}} 
\newcommand{\xoseen}[1]{{X}^{#1}_{\sigma,\mathrm{Oseen}}}
\newcommand{\xpres}[1]{{X}^{#1}_{\mathrm{pres}}}
\newcommand{\xspacegeneric}{X}
\newcommand{\nsnonlinb}[2]{#1\cdot\grad #2}
\newcommand{\nsnonlin}[1]{\nsnonlinb{#1}{#1}}
\newcommand{\vvel}{v}
\newcommand{\tvvel}{\tilde{v}}
\newcommand{\wvel}{w}
\newcommand{\twvel}{\tilde{w}}
\newcommand{\uvelnonrel}{\mathfrak{u}}
\newcommand{\uvel}{u}
\newcommand{\uvelft}{\ft{u}}
\newcommand{\upres}{\mathfrak{p}}
\newcommand{\Uvel}{U}
\newcommand{\weakuvel}{\mathcal{U}}
\newcommand{\weakwvel}{\mathcal{W}}
\newcommand{\weakvvel}{\mathcal{V}}
\newcommand{\tweakuvel}{\widetilde{\mathcal{U}}}
\newcommand{\tuvel}{\tilde{u}}
\newcommand{\tupres}{\tilde{\mathfrak{p}}}
\newcommand{\tf}{\tilde{f}}
\newcommand{\ALOseen}{A_{\rm Oseen}}
\newcommand{\ALOseeninverse}{A_{\rm Oseen}^{-1}}
\newcommand{\ALgeneric}{\mathrm{A}}
\newcommand{\ALTP}{{\ALgeneric}_{\mathrm{TP}}}
\newcommand{\ALTPinverse}{{\ALgeneric}^{-1}_{\rm TP}}
\newcommand{\tin}{\text{in }}
\newcommand{\tif}{\text{if }}
\newcommand{\tand}{\text{and }}
\newcommand{\half}{\frac{1}{2}}
\renewcommand{\epsilon}{\varepsilon}
\renewcommand{\phi}{\varphi}
\newcommand{\rey}{\lambda}
\newcommand{\tay}{\calt}
\newcommand{\per}{\tay}
\newcommand{\iper}{\frac{1}{\tay}}
\newcommand{\perf}{\frac{2\pi}{\tay}}
\newcommand{\iperf}{\frac{\tay}{2\pi}}
\newcommand{\eone}{\e_1}
\newcommand{\dualrho}{\hat{\rho}}
\newcommand{\mmultiplier}{m}
\newcommand{\Mmultiplier}{M}
\newcommand{\polynomial}{P}
\newcommand{\tq}{\tilde{q}}
\newcommand{\bijection}{\Pi}
\newcommand{\bijectioninv}{\Pi^{-1}}
\newcommand{\fpmap}{\calj}
\newcommand{\newCCtr}[2][d]{
\newcounter{#2}\setcounter{#2}{0}
\expandafter\xdef\csname kyedtheconst#2\endcsname{#1}
}
\newcommand{\Cc}[2][nolabel]{
\stepcounter{#2}
\expandafter\ensuremath{\csname kyedtheconst#2\endcsname_{\arabic{#2}}}
\ifthenelse{\equal{#1}{nolabel}}
{}
{\expandafter\xdef\csname kyedconst#1\endcsname
{\expandafter\ensuremath{\csname kyedtheconst#2\endcsname_{\arabic{#2}}}}}
}
\newcommand{\CcSetCtr}[2]{
\setcounter{#1}{#2}
}
\newcommand{\Cclast}[1]{
\expandafter\ensuremath{\csname kyedtheconst#1\endcsname_{\arabic{#1}}}
}
\newcommand{\Ccllast}[1]{
\addtocounter{#1}{-1}
\expandafter\ensuremath{\csname kyedtheconst#1\endcsname_{\arabic{#1}}}
\addtocounter{#1}{1}
}
\newcommand{\const}[1]{
\expandafter{\ifcsname kyedconst#1\endcsname
  \csname kyedconst#1\endcsname
\else
  \errmessage{Undefined Kyedconstant #1.}%
\fi}
}
\theoremstyle{plain}
\newtheorem{thm}{Theorem}[section]
\newtheorem{defn}[thm]{Definition}
\newtheorem{lem}[thm]{Lemma}
\newtheorem{cor}[thm]{Corollary}
\theoremstyle{remark}
\newtheorem{rem}[thm]{Remark}
\begin{document}
\title{Existence and regularity of time-periodic solutions to the three-dimensional Navier-Stokes equations}

\author{
Mads Kyed \thanks{Some results in this paper are part of the authors habilitation thesis \cite{habil}.}\\
Institut f\"ur Mathematik \\
Universit\"at Kassel, Germany \\
Email: {\tt mkyed@mathematik.uni-kassel.de}
}

\date{\today}
\maketitle

\begin{abstract}
Existence, uniqueness, and regularity of time-periodic solutions to the Navier-Stokes equations in the three-dimensional whole-space are investigated. 
We consider the Navier-Stokes equations with a non-zero drift term corresponding to the physical model of a fluid flow around a body that moves with a non-zero constant velocity. Existence of a strong time-periodic solution is shown for small time-periodic data. 
It is further shown that this solution is unique in a large class of weak solutions that can
be considered physically reasonable. Finally, we establish regularity properties for any strong solution regardless of its size.
\end{abstract}

\noindent\textbf{MSC2010:} Primary 35Q30, 35B10, 76D05, 76D03.\\
\noindent\textbf{Keywords:} Navier-Stokes, time-periodic strong solutions, uniqueness, regularity.

\newCCtr[C]{C}
\newCCtr[M]{M}
\newCCtr[B]{B}
\newCCtr[\epsilon]{eps}
\CcSetCtr{eps}{-1}

\section{Introduction}

We investigate the time-periodic Navier-Stokes equations with a non-zero drift term in the three-dimensional whole-space. More specifically, we consider 
the system
\begin{align}\label{intro_nspastbodywholespace}
\begin{pdeq}
&\partial_t\uvel -\Delta\uvel -\rey\partial_1\uvel + \grad\upres + \nsnonlin{\uvel}  = f && \tin\R^3\times\R,\\
&\Div\uvel =0 && \tin\R^3\times\R
\end{pdeq}
\end{align} 
for an Eulerian velocity field $\uvel:\R^3\times\R\ra\R^3$ and pressure term
$\upres:\R^3\times\R\ra\R$ as well as data $f:\R^3\times\R\ra\R^3$ that are all $\per$-time-periodic, that is, 
\begin{align}\label{intro_timeperiodicsolution}
\begin{aligned}
&\forall (x,t)\in\R^3\times\R:\quad \uvel(x,t) = \uvel(x,t+\per)\quad\tand\quad \upres(x,t) = \upres(x,t+\per)
\end{aligned}
\end{align}
and 
\begin{align}\label{intro_timeperiodicdata}
\begin{aligned}
&\forall (x,t)\in\R^3\times\R:\quad f(x,t) = f(x,t+\per).
\end{aligned}
\end{align}
The time period $\per>0$ is a fixed constant. Physically, the system \eqref{intro_nspastbodywholespace} originates from the model of a flow of an
incompressible, viscous, Newtonian fluid past an object that moves with velocity $\rey\eone\in\R^3$. We shall consider the case $\rey\neq 0$ corresponding to the case
of an object moving with non-zero velocity.

The study of the time-periodic Navier-Stokes equations was initiated by \textsc{Serrin} in \cite{Serrin_PeriodicSolutionsNS1959}. \textsc{Serrin}
postulated that for time-periodic data $f$, and \emph{any} initial value, the solution $\uvelnonrel(x,t)$ to the corresponding initial-value problem 
converges as $t\ra\infty$ to some state which, when considered as an initial value in the initial-value problem, yields a time-periodic solution. 
The rationale behind \textsc{Serrin}'s postulate is that $\uvelnonrel(x,\per n)$, $\per$ being the time period of $f$, 
converges as $n\ra\infty$ to a state on a periodical orbit.
Another approach was introduced independently by \textsc{Yudovich} in \cite{Yudovich60} and \textsc{Prodi} in \cite{Prodi1960}.
These authors proposed to obtain a time-periodic solution by considering the Poincar\'{e} map that takes an initial value into the state obtained 
by evaluation at time $\per$ of the solution to the corresponding initial-value problem, where $\per$ is the period of the prescribed data $f$. 
A time-periodic solution is then identified as a fixed point of this Poincar\'{e} map. 
A further technique based on a representation formula derived from the Stokes semi-group was introduced by \textsc{Kozono} and \textsc{Nakao} in \cite{KozonoNakao96}.
All the methods described above have in common that they utilize the theory for the initial-value problem.
Over the years, a number of investigations based on these methods, or similar ideas involving the initial-value problem in some way, have been carried out:
\cite{Prodi1960},\cite{Prouse1963_TimePer2D},\cite{Prouse63},\cite{KanielShinbrot67},\cite{Takeshita69},\cite{Morimoto1972}, \cite{MiyakawaTeramoto82},\cite{Teramoto1983},\cite{Maremonti_TimePer91},\cite{Maremonti_HalfSpace91},\cite{MaremontiPadula96},\cite{Yamazaki2000},\cite{GaldiSohr2004},
\cite{GaldiSilvestre_Per06},\cite{GaldiSilvestre_Per09},\cite{Taniuchi2009},\cite{BaalenWittwer2011},\cite{Silvestre_TPFiniteKineticEnergy12}. 
None of these papers treat the question of existence and regularity of strong solutions in the case $\rey\neq 0$ of a flow past an obstacle moving with non-zero velocity. Only very recently has this question been investigated by \textsc{Galdi} \cite{GaldiTP2D12,GaldiTP2D13} in the two-dimensional case.

As the main result in this paper, existence of a strong solution to the time-periodic Navier-Stokes equations in the three-dimensional whole-space in the case $\rey\neq 0$ is shown for time-periodic data sufficiently restricted in size, that is, a strong solution to \eqref{intro_nspastbodywholespace}--\eqref{intro_timeperiodicdata}. 
It is further shown that this solution is unique in a large class of weak solutions, that 
it obeys a balance of energy, and that it is as regular as the data allows for. We shall employ a method that does \emph{not} utilize the
corresponding initial-value problem. Instead, we will reformulate \eqref{intro_nspastbodywholespace}--\eqref{intro_timeperiodicdata} as an equivalent system 
on the group $\grp:=\R^3\times\R/\per\Z$. This approach allows us to derive a suitable representation of the solution in terms of a Fourier multiplier based on the
Fourier transform $\FT_\grp$ associated to the group $\grp$. The method allows us to avoid 
the functional analytic setting of the initial-value problem and instead develop one that seems better suited for the time-periodic case.

In \cite{mrtpns} the linearization of \eqref{intro_nspastbodywholespace}--\eqref{intro_timeperiodicdata} was investigated and Banach spaces 
that establish maximal regularity in an $\LR{q}$-setting were identified. The methods deployed in \cite{mrtpns} were based on a
decomposition of the problem into a steady-state problem and a time-periodic problem involving only vector fields with vanishing time-average. We shall
employ in this paper both the decomposition and the maximal regularity results established in \cite{mrtpns}.

\section{Statement of the main results}\label{StatementOfMainResultSection}

We start by defining the function spaces needed to state the main theorems.
We denote points in $\R^3\times\R$ by $(x,t)$, and refer to $x$ as the spatial and $t$ as the time variable. 
We introduce the spaces of real functions
\begin{align*}
&\CRper(\R^3\times\R) := \setcl{U\in\CRi(\R^3\times\R)}{\forall t\in\R:\ U(\cdot,t+\per)=U(\cdot,t)},\\
&\CRciper\bp{\R^3\times[0,\per]} := \setcl{u\in\CRci\bp{\R^3\times[0,\per]}}{\exists U\in\CRper(\R^3\times\R):\ u=U_{|\R^3\times[0,\per]}},\\
&\CRcisigmaper\bp{\R^3\times[0,\per]} := \setcl{u\in\CRciper(\R^3\times[0,\per])^3}{\Div_x u = 0}.
\end{align*}
We introduce Lebesgue and Sobolev spaces as completions of the spaces above in different norms. 
Lebesgue spaces are defined for $q\in[1,\infty)$ by 
\begin{align*}
&\LRper{q}\bp{\R^3\times(0,\per)}:=\closure{\CRciper(\R^3\times[0,\per])}{\norm{\cdot}_{q}},\quad \norm{u}_{q} := \norm{u}_{\LR{q}\bp{\R^3\times(0,\per)}}. 
\end{align*}
Clearly, $\LRper{q}\bp{\R^3\times(0,\per)}$ coincides with the classical Lebesgue space $\LR{q}\bp{\R^3\times(0,\per)}$, and we will therefore omit the subscript $\textrm{per}$ for Lebesgue spaces in the following. 
Sobolev spaces of $\per$-time-periodic functions are defined for $k\in\N_0$ by
\begin{align}\label{MR_DefOfWSRper}
\begin{aligned}
&\WSRper{k}{q}\bp{\R^3\times(0,\per)}:=\closure{\CRciper(\R^3\times[0,\per])}{\norm{\cdot}_{k,q}},\\
&\norm{\uvel}_{k,q}:=\Bp{\sum_{(\alpha,\beta)\in\N_0^3\times\N_0,\ \snorm{(\alpha,\beta)}\leq k} \norm{\partial_t^\beta\partial_x^\alpha\uvel}^q_q }^{1/q}.
\end{aligned}
\end{align}
In contrast to the Lebesgue spaces, the Sobolev spaces $\WSRper{k}{q}\bp{\R^3\times(0,\per)}$ do \emph{not} coincide with the classical Sobolev spaces 
$\WSR{k}{q}\bp{\R^3\times(0,\per)}$.
For $q\in[1,\infty)$ we define the Lebesgue space of solenoidal vector fields
\begin{align*}
&\LRsigma{q}\bp{\R^3\times(0,\per)}:=\closure{\CRcisigmaper(\R^3\times[0,\per])}{\norm{\cdot}_{q}}, 
\end{align*}
and the anisotropic Sobolev space of $\per$-time-periodic, solenoidal, vector fields  
\begin{align}\label{MR_DefOfWSRsigmaper}
\begin{aligned}
&\WSRsigmaper{2,1}{q}\bp{\R^3\times(0,\per)}:= \closure{\CRcisigmaper(\R^3\times[0,\per])}{\norm{\cdot}_{2,1,q}},\\
&\norm{u}_{2,1,q} := 
\Bp{\sum_{(\alpha,\beta)\in\N_0^3\times\N_0,\ \snorm{\alpha}\leq 2,\snorm{\beta}\leq 1} \norm{\partial_x^\alpha u}^q_{q} + 
\norm{\partial_t^\beta u}^q_{q}}^{1/q}. 
\end{aligned}
\end{align}
In order to incorporate the decomposition described in the introduction on the level of function spaces, we define on functions $u:\R^3\times(0,\per)\ra\R$
the operators
\begin{align}\label{intro_defofprojGernericExpression}
\proj u(x,t):=\iper\int_0^\per u(x,s)\,\ds\quad\tand\quad\projcompl u(x,t) := u(x,t)-\proj u(x,t)
\end{align}
whenever these expressions are well-defined. 
Note that $\proj$ and $\projcompl$ decompose a time-periodic vector field $u$ into a time-independent part
$\proj u$ and a time-periodic part $\projcompl u$ with vanishing time-average over the period.
Also note that $\proj$ and $\projcompl$ are complementary projections, that is, $\proj^2=\proj$ and $\projcompl=\id-\proj$. As one may easily verify, 
\begin{align*} 
&\proj,\projcompl:\CRciper(\R^3\times[0,\per])\ra\CRciper(\R^3\times[0,\per]),
\end{align*}
and both projections extend by continuity to
bounded operators on $\LRsigma{q}\bp{\R^3\times(0,\per)}$ and $\WSRsigmaper{2,1}{q}\bp{\R^3\times(0,\per)}$.
We can thus define
\begin{align*}
&\LRsigmacompl{q}\bp{\R^3\times(0,\per)} := \projcompl\LRsigma{q}\bp{\R^3\times(0,\per)},\\
&\WSRsigmapercompl{2,1}{q}\bp{\R^3\times(0,\per)}:= 
\projcompl\WSRsigmaper{2,1}{q}\bp{\R^3\times(0,\per)}.
\end{align*}
For convenience, we introduce for intersections of such spaces the notation
\begin{align*}
&\LRsigmacompl{q,r}\bp{\R^3\times(0,\per)}:=\ \LRsigmacompl{q}\bp{\R^3\times(0,\per)}\ \cap\ \LRsigmacompl{r}\bp{\R^3\times(0,\per)},\\  
&\WSRsigmapercompl{2,1}{q,r}\bp{\R^3\times(0,\per)}:=\ \WSRsigmapercompl{2,1}{q}\bp{\R^3\times(0,\per)}\ \cap\ \WSRsigmapercompl{2,1}{r}\bp{\R^3\times(0,\per)}.
\end{align*}
For $q\in(1,2)$ we let
\begin{align}\label{MR_DefOfxoseenq}
\begin{aligned}
&\xoseen{q}(\R^3):=\setcl{\vvel\in\LRloc{1}(\R^3)^3}{\Div\vvel=0,\ \oseennorm{\vvel}{q}<\infty},\\
&\oseennorm{\vvel}{q} := \snorm{\rey}^{\frac{1}{2}}\norm{\vvel}_{\frac{2q}{2-q}} + \snorm{\rey}^{\frac{1}{4}}\norm{\grad\vvel}_{\frac{4}{4-q}} + \snorm{\rey}\norm{\partial_1\vvel}_q+\norm{\grad^2\vvel}_q,  
\end{aligned}
\end{align}
which is a Banach space intrinsically linked with the three-dimensional Oseen operator. 
For $q\in(1,2)$ and $r\in(1,\infty)$ we put
\begin{align}\label{MR_DefOfxoseenqr}
\begin{aligned}
&\xoseen{q,r}(\R^3):=\setcl{\vvel\in\LRloc{1}(\R^3)^3}{\Div\vvel=0,\ \oseennorm{\vvel}{q,r}<\infty},\\
&\oseennorm{\vvel}{q,r} := \oseennorm{\vvel}{q}+\norm{\grad^2\vvel}_r.
\end{aligned}
\end{align}
For $q\in(1,3)$ and $r\in(1,\infty)$ we further define
\begin{align}\label{MR_DefOfXpres}
\begin{aligned}
&\xpres{q,r}\bp{\R^3\times(0,\per)}:=\setc{\upres\in\LRloc{1}\bp{\R^3\times(0,\per)}}{\norm{\upres}_{\xpres{q,r}}<\infty},\\
&\norm{\upres}_{\xpres{q,r}}:=\Bp{\int_0^\per \norm{\upres(\cdot,t)}_{\frac{3q}{3-q}}^q + \norm{\grad_x\upres(\cdot,t)}_q^q\,\dt }^{1/q}+\norm{\grad_x\upres}_{r}.
\end{aligned}
\end{align}
Finally, we let
\begin{align*}
\DSRNsigma{1}{2}(\R^3):=\overline{\CRcisigma(\R^3)}^{\norm{\grad\cdot}_{2}}=\setc{\uvel\in\LR{6}(\R^3)^3}{\grad\uvel\in\LR{2}(\R^3)^{3\times 3},\ \Div\uvel=0}
\end{align*}
denote the classical homogeneous Sobolev space of solenoidal vector fields (the latter equality above is due to a standard Sobolev embedding theorem) and 
\begin{align*}
\WSRloc{a,b}{q}(\R^3\times\R):=\setc{\uvel\in\LRloc{q}(\R^3\times\R)}{\partial_x^\alpha\uvel,\partial_t^\beta\uvel\in\LRloc{q}(\R^3\times\R)\text{ for }\snorm{\alpha}\leq a,\snorm{\beta}\leq b}
\end{align*}
for $q\in[1,\infty)$ and $a,b\in\N_0$.

Throughout the paper, we shall frequently consider the restriction of $\per$-time-periodic functions defined on $\R^3\times\R$ to the domain $\R^3\times(0,\per)$.
More specifically, without additional notation we implicitly treat $\per$-time-periodic functions $f:\R^3\times\R\ra\R$ as functions $f:\R^3\times(0,\per)\ra\R$.  
If $f$ is independent on $t$, we may treat it as a function $f:\R^3\ra\R$.

We are now in a position to state the main results of the paper. The first theorem establishes existence of a strong solution for sufficiently small data. It it 
further shown that this solution is unique in large class of weak solutions that can be considered physically reasonable. We first define this class.
\begin{defn}\label{UniquenessClassDef}
Let $f\in\LRloc{1}\bp{\R^3\times\R}^3$ satisfy \eqref{intro_timeperiodicdata}.
We say that $\weakuvel\in\LRloc{1}\bp{\R^3\times\R}^3$ satisfying \eqref{intro_timeperiodicsolution} is a \emph{physically reasonable weak time-periodic solution} to \eqref{intro_nspastbodywholespace} if\footnote{We can consider the restriction $\weakuvel\in\LRloc{1}\bp{\R^3\times(0,\per)}$ as a vector-valued mapping $t\ra\weakuvel(\cdot,t)$. Moreover, it is easy to see that $\proj\weakuvel$ and thus also $\projcompl\weakuvel$ are well-defined as 
elements in $\LRloc{1}\bp{\R^3\times(0,\per)}$. Consequently, we may also consider $\projcompl\weakuvel$ as a vector-valued mapping $t\ra\projcompl\weakuvel(\cdot,t)$.}
\begin{enumerate}[1),leftmargin=\parindent, itemindent=0.2cm]
\item\label{UniquenessClassDefProp1} $\weakuvel\in\LR{2}\bp{(0,\per);\DSRNsigma{1}{2}(\R^3)}$,
\item\label{UniquenessClassDefProp2} $\projcompl\weakuvel\in\LR{\infty}\bp{(0,\per);\LR{2}(\R^3)^3}$,
\item\label{UniquenessClassDefProp3} $\weakuvel$ is a generalized $\per$-time-periodic solution to \eqref{intro_nspastbodywholespace} in the sense that for all test functions $\Phi\in\CRcisigmaper\bp{\R^3\times(0,\per)}$ holds
\begin{align}\label{UniquenessClassDefDefofweaksol}
\begin{aligned}
\int_0^\per\int_{\R^3} -\weakuvel\cdot\partial_t\Phi +\grad\weakuvel:\grad\Phi -\rey\partial_1\weakuvel\cdot\Phi + (\nsnonlin{\weakuvel})\cdot\Phi\,\dx\dt  = \int_0^\per\int_{\R^3} f\cdot\Phi\,\dx\dt,
\end{aligned}
\end{align}  
\item\label{UniquenessClassDefProp4} $\weakuvel$ satisfies the energy inequality\footnote{The integral on the right-hand side of \eqref{UniquenessClassDefEnergyIneq} 
is not necessarily well-defined for $f\in\LRloc{1}\bp{\R^3\times\R}^3$ and $\weakuvel$ satisfying \ref{UniquenessClassDefProp1}--\ref{UniquenessClassDefProp2}. Included in the definition of a physically reasonable weak time-periodic solution is therefore an implicit condition that these vector fields possess enough integrability for this integral to be well-defined.}
\begin{align}\label{UniquenessClassDefEnergyIneq}
\begin{aligned}
\int_0^\per\int_{\R^3} \snorm{\grad\weakuvel}^2\,\dx\dt \leq \int_0^\per\int_{\R^3} f\cdot \weakuvel\,\dx\dt. 
\end{aligned}
\end{align}
\end{enumerate}
\end{defn}

\begin{rem}\label{justificationOfPRsol}
The characterization of a solution satisfying \ref{UniquenessClassDefProp1}--\ref{UniquenessClassDefProp4} in Definition \ref{UniquenessClassDef} as a \emph{physically reasonable} solution is justified by the physical properties that can be derived 
from property \ref{UniquenessClassDefProp2} and \ref{UniquenessClassDefProp4}.
More precisely, if we consider the fluid flow corresponding to the Eulerian velocity field $\weakuvel$ as the sum of a steady state $\proj\weakuvel$ and a non-steady part 
$\projcompl\weakuvel$, property \ref{UniquenessClassDefProp2} implies that the kinetic energy of the non-steady part of the flow is bounded.
Property \ref{UniquenessClassDefProp4} states that the energy dissipated due to the viscosity of the fluid is less than the input of 
energy from the external forces. 
\end{rem}

We now state the first main theorem of the paper, which establishes existence of a strong solution unique in the class of \emph{physically reasonable weak solutions}. We shall further show that this solution satisfies an energy equality. The theorem reads:

\begin{thm}\label{ExistenceAndUniquenessThm}
Let $q\in(1,\frac{6}{5}\big]$, $r\in(4,\infty)$ and $\lambda\neq 0$. 
There is a constant $\Cc[ExistenceAndUniquenessThmConst]{eps}>0$ such that for any 
$f\in\LRloc{1}\bp{\R^3\times\R}^3$ satisfying \eqref{intro_timeperiodicdata} and
\begin{align}\label{ExistenceAndUniquenessThmDataCond}
\norm{f}_{\LR{q}\bp{\R^3\times(0,\per)}} + \norm{f}_{\LR{r}\bp{\R^3\times(0,\per)}} \leq \const{ExistenceAndUniquenessThmConst}  
\end{align}
there is a solution $(\uvel,\upres)\in\LRloc{1}\bp{\R^3\times\R}^3\times\LRloc{1}\bp{\R^3\times\R}$ to \eqref{intro_nspastbodywholespace}--\eqref{intro_timeperiodicsolution} with $\uvel=\vvel+\wvel$ and
\begin{align}\label{ExistenceAndUniquenessThmSolSpace}
(\vvel,\wvel,\upres)\in\xoseen{q,r}(\R^3)\times\WSRsigmapercompl{2,1}{q,r}\bp{\R^3\times(0,\per)}\times\xpres{q,r}\bp{\R^3\times(0,\per)}.
\end{align}
Moreover, $\uvel$ belongs to and is unique in the class of \emph{physically reasonable weak solutions} characterized by Definition \ref{UniquenessClassDef},
and it  satisfies the energy equality
\begin{align}\label{EnergyEqEE}
\int_0^\per\int_{\R^3} \snorm{\grad\uvel}^2\,\dx\dt = \int_0^\per\int_{\R^3} f\cdot\uvel\,\dx\dt. 
\end{align}
\end{thm}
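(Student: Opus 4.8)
The plan is to solve \eqref{intro_nspastbodywholespace}--\eqref{intro_timeperiodicsolution} by a Banach fixed-point argument erected on the linear maximal-regularity theory of \cite{mrtpns}, and to settle membership in the weak class, the energy equality, and uniqueness afterwards by energy methods. Following the decomposition from the introduction, I would apply $\proj$ and $\projcompl$ to the momentum equation. Since $\vvel=\proj\uvel$ is time-independent and $\proj\wvel=0$, the steady part solves an Oseen system with effective forcing $\proj f-\nsnonlin{\vvel}-\proj(\nsnonlinb{\wvel}{\wvel})$ (the mixed terms drop because $\proj(\nsnonlinb{\vvel}{\wvel})=\vvel\cdot\grad\proj\wvel=0$ and $\proj(\nsnonlinb{\wvel}{\vvel})=\proj\wvel\cdot\grad\vvel=0$), while the zero-mean part $\wvel=\projcompl\uvel$ solves a time-periodic Oseen system with forcing $\projcompl f-\nsnonlinb{\vvel}{\wvel}-\nsnonlinb{\wvel}{\vvel}-\projcompl(\nsnonlinb{\wvel}{\wvel})$. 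The results of \cite{mrtpns} provide a bounded inverse of the linearized operator from the solenoidal data space $\LR{q}\cap\LR{r}$ onto $\xoseen{q,r}(\R^3)\times\WSRsigmapercompl{2,1}{q,r}\bp{\R^3\times(0,\per)}$, with the pressure recovered in $\xpres{q,r}\bp{\R^3\times(0,\per)}$; here $\rey\neq0$ is essential, as it is the Oseen (not Stokes) operator that carries the anisotropic decay making the $\LR{q}$-theory close.

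The core analytic step is to bound the four quadratic pieces $\nsnonlinb{\vvel}{\vvel}$, $\nsnonlinb{\vvel}{\wvel}$, $\nsnonlinb{\wvel}{\vvel}$, $\nsnonlinb{\wvel}{\wvel}$ in the data norms $\LR{q}$ and $\LR{r}$ in terms of the solution norms. I would do this by Hölder's inequality together with the embeddings encoded in $\oseennorm{\cdot}{q,r}$ (which controls $\norm{\vvel}_{\frac{2q}{2-q}}$, $\norm{\grad\vvel}_{\frac{4}{4-q}}$ and, via $\norm{\grad^2\vvel}_r$ with $r>4$, also $\grad\vvel$ in $\LR{\infty}$) and the anisotropic space-time embeddings for $\WSRsigmapercompl{2,1}{q,r}$. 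The hypotheses $q\le\frac{6}{5}$ and $r>4$ are precisely what make all the resulting exponents admissible. Because the nonlinearity is quadratic, composing the bounded linear inverse with these estimates yields a self-map of a small ball in $\xoseen{q,r}(\R^3)\times\WSRsigmapercompl{2,1}{q,r}\bp{\R^3\times(0,\per)}$ that is a contraction whenever $\const{ExistenceAndUniquenessThmConst}$ is small; its unique fixed point is the asserted strong solution $(\vvel,\wvel,\upres)$.

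To place $\uvel=\vvel+\wvel$ in the class of Definition \ref{UniquenessClassDef}, I would verify \ref{UniquenessClassDefProp1} by showing $\grad\uvel\in\LR{2}\bp{\R^3\times(0,\per)}$, which for the steady part follows from $\grad^2\vvel\in\LR{q}\cap\LR{r}$ by Sobolev embedding and interpolation of the two exponents, and for $\wvel$ from the anisotropic embedding; property \ref{UniquenessClassDefProp2} follows from $\WSRsigmapercompl{2,1}{q,r}\bp{\R^3\times(0,\per)}\embeds\LR{\infty}\bp{(0,\per);\LR{2}(\R^3)}$. Property \ref{UniquenessClassDefProp3} is obtained by testing the strong equations against $\Phi\in\CRcisigmaper\bp{\R^3\times(0,\per)}$ and integrating by parts, whereby the pressure drops out. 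Testing instead with $\uvel$ itself yields \eqref{EnergyEqEE}: the pressure term vanishes by $\Div\uvel=0$, the drift term vanishes since $\int_{\R^3}\partial_1\uvel\cdot\uvel\,\dx=0$, the convective term vanishes by $\Div\uvel=0$, and the time-derivative term integrates to zero over a period because $\pt\uvel=\pt\wvel$, $\proj\wvel=0$, and $t\mapsto\norm{\wvel(\cdot,t)}_2$ is periodic. The integrability established above justifies each integration by parts, and \eqref{EnergyEqEE} gives \ref{UniquenessClassDefProp4} a fortiori.

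The main obstacle is uniqueness in the full class of physically reasonable weak solutions. Given any such $\weakuvel$ with the same data, set $W:=\weakuvel-\uvel$; by \ref{UniquenessClassDefProp1} one has $W\in\LR{2}\bp{(0,\per);\DSRNsigma{1}{2}(\R^3)}$, so $\grad W\in\LR{2}\bp{\R^3\times(0,\per)}$ and $\norm{W(\cdot,t)}_6\le C\norm{\grad W(\cdot,t)}_2$. The structural advantage of the time-periodic setting is that, unlike the initial-value problem, there is no initial-data term and no Gronwall argument is needed: integrating the energy balance for $W$ over one period, the time-derivative contribution vanishes by periodicity, leaving an estimate of the form $\int_0^\per\norm{\grad W}_2^2\,\dt\le\int_0^\per\snormL{\int_{\R^3}(\nsnonlinb{W}{W})\cdot\uvel\,\dx}\dt$ after moving the derivative onto $W$ by an integration by parts using $\Div W=0$. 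Bounding the right-hand side by Hölder's and Sobolev's inequalities gives $\int_0^\per\norm{\grad W}_2^2\,\dt\le C\,\norm{\uvel}_{\LR{\infty}((0,\per);\LR{3})}\int_0^\per\norm{\grad W}_2^2\,\dt$, and since the critical norm $\norm{\uvel}_{\LR{\infty}((0,\per);\LR{3})}$ is controlled by the solution size, hence by $\const{ExistenceAndUniquenessThmConst}$, the right-hand side is absorbed for small data and $\grad W=0$, whence $W=0$. The delicate points are the admissibility of $\uvel$ (and of $W$) as a test field for the merely weak solution $\weakuvel$, which requires a density/approximation argument resting on the extra integrability and the $\rey\neq0$ Oseen decay of $\uvel$, and the observation that the critical cross term sits exactly at the endpoint scaling $\uvel\in\LR{\infty}_t\LR{3}_x$ reached at $q=\frac{6}{5}$.
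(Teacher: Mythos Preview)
Your existence argument and the final uniqueness estimate are essentially the paper's: a contraction on $\xoseen{q,r}(\R^3)\times\WSRsigmapercompl{2,1}{q,r}$ built on the linear maximal regularity of \cite{mrtpns}, and the absorption inequality
\[
\int_0^\per\norm{\grad(\weakuvel-\uvel)}_2^2\,\dt \le C\,\norm{\uvel}_{\LR{\infty}((0,\per);\LR{3}(\R^3))}\int_0^\per\norm{\grad(\weakuvel-\uvel)}_2^2\,\dt,
\]
with $q\le\frac{6}{5}$ giving the $\LR{3}$-control of $\vvel$. Two minor points: the paper works on the group $\grp=\R^3\times\R/\per\Z$ so as to use $\FT_\grp$-multipliers systematically, and for the energy equality it tests with $\vvel$ and $\wvel$ separately rather than with $\uvel$, because the integrability of $\partial_t\wvel\cdot\vvel$, $\Delta\vvel\cdot\vvel$, etc.\ is easier to check termwise; your direct route is not wrong but needs this bookkeeping.

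There is, however, a genuine gap in the uniqueness argument. To derive your inequality for $W=\weakuvel-\uvel$ you must (i) insert $\uvel$ as test field in the weak formulation \eqref{UniquenessClassDefDefofweaksol} for $\weakuvel$ and (ii) multiply the strong equation for $\uvel$ by $\weakuvel$ and integrate. Both steps require integrability that the \emph{a priori} class of Definition~\ref{UniquenessClassDef} does not supply. Concretely, the drift term produces $\rey\,\partial_1\weakuvel\cdot\vvel$; from Definition~\ref{UniquenessClassDef} one only has $\partial_1\weakuvel\in\LR{2}(\grp)$, while $\vvel\in\LR{\frac{2q}{2-q}}(\R^3)\cap\LR{\infty}(\R^3)$ with $\frac{2q}{2-q}>2$, so $\vvel\notin\LR{2}(\R^3)$ in general and the pairing is not in $\LR{1}$. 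A density approximation of $\uvel$ alone cannot cure this: the obstruction is the summability of $\weakuvel$, not of $\uvel$.

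The paper closes this gap with a separate regularity lemma (Lemma~\ref{ss_AddRegOfWeakSolLem}): any physically reasonable weak solution $\weakuvel$ with data $f\in\LR{q}\cap\LR{3/2}$ automatically satisfies $\projcompl\weakuvel\in\WSRsigmacompl{2,1}{q}(\grp)$ and $\proj\weakuvel\in\xoseen{\tq}(\R^3)$. The proof is a bootstrap: one first shows $\nsnonlin{\weakwvel}\in\LR{1}\cap\LR{5/4}$ and $\proj[\nsnonlin{\weakwvel}]\in\LR{3/2}$ from the weak-class bounds, feeds this into the steady Oseen problem for $\weakvvel=\proj\weakuvel$ (invoking \cite[Lemma~X.6.1]{GaldiBookNew}) to get $\weakvvel\in\xoseen{q}\cap\xoseen{3/2}$, and then returns to the time-periodic linear problem for $\weakwvel$ via Lemma~\ref{lt_TPOseenMappingThmLem}. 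Only with this upgraded regularity do all the pairings in the cross-testing (your steps leading to the energy balance for $W$) become integrable. You should insert this lemma before the uniqueness step; the rest of your outline then goes through exactly as in the paper.
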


The second main theorem of the paper concerns regularity properties of strong solutions. 
More specifically, it is shown that any additional regularity of the data translates into a similar degree of 
additional regularity for the solution. 

\begin{thm}\label{RegularityThm}
Let $q\in\big(1,\frac{4}{3}\big]$, $r\in(8,\infty)$, $\lambda\neq0$ and $m\in\N_0$. If
$f\in\LRloc{1}\bp{\R^3\times\R}^3$ satisfies \eqref{intro_timeperiodicdata} and 
\begin{align*}
f\in\WSRper{m}{q}\bp{\R^3\times(0,\per)}^3\cap\WSRper{m}{r}\bp{\R^3\times(0,\per)}^3,
\end{align*}
then a solution $(\uvel,\upres)\in\LRloc{1}\bp{\R^3\times\R}^3\times\LRloc{1}\bp{\R^3\times\R}$ to \eqref{intro_nspastbodywholespace}--\eqref{intro_timeperiodicsolution} in the class \eqref{ExistenceAndUniquenessThmSolSpace} (with $\uvel=\vvel+\wvel$) satisfies
\begin{align*}
\begin{aligned}
&\forall(\alpha,\beta,\kappa)\in\N_0^3\times\N_0^3\times\N_0,\ \snorm{\alpha}\leq m,\ \snorm{\beta}+\snorm{\kappa}\leq m:\\
&\qquad(\partial_x^\alpha\vvel,\partial_x^\beta\partial_t^\kappa\wvel,\partial_x^\beta\partial_t^\kappa\upres)\in 
\WSRloc{2}{r}(\R^3)\times\WSRloc{2,1}{r}\np{\R^3\times\R}\times\WSRloc{1,0}{r}\np{\R^3\times\R}\quad\text{with}\\
&\qquad(\partial_x^\alpha\vvel,\partial_x^\beta\partial_t^\kappa\wvel,\partial_x^\beta\partial_t^\kappa\upres)\in 
\xoseen{q,r}(\R^3)\times\WSRsigmapercompl{2,1}{q,r}\bp{\R^3\times(0,\per)}\times\xpres{q,r}\bp{\R^3\times(0,\per)}.
\end{aligned}
\end{align*}
\end{thm}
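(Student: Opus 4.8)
The plan is to run a bootstrap argument on the order of differentiation $m$, exploiting that the full system \eqref{intro_nspastbodywholespace} has constant coefficients and that the decomposition \eqref{intro_defofprojGernericExpression} splits it into two independent linear problems with the nonlinearity moved to the right-hand side. Applying $\proj$ and $\projcompl$ to \eqref{intro_nspastbodywholespace}, and using $\proj\partial_t\uvel=0$ together with the fact that $\proj$ and $\projcompl$ commute with $\partial_x$ (and $\projcompl$ with $\partial_t$), the pair $(\vvel,\wvel)=(\proj\uvel,\projcompl\uvel)$ solves a steady Oseen system for $\vvel$ and a zero-time-average time-periodic Oseen system for $\wvel$, both driven by $f-\nsnonlin{\uvel}$. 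The key tool is the linear maximal regularity isomorphism established in \cite{mrtpns}: for data in $\LR{q}\cap\LR{r}$ the three solution components land precisely in $\xoseen{q,r}(\R^3)$, $\WSRsigmapercompl{2,1}{q,r}\bp{\R^3\times(0,\per)}$ and $\xpres{q,r}\bp{\R^3\times(0,\per)}$, together with an a priori estimate.

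The bootstrap proceeds by induction on $m$. The case $m=0$ is exactly the hypothesis that $(\vvel,\wvel,\upres)$ lies in the class \eqref{ExistenceAndUniquenessThmSolSpace}, from which the local statements $\vvel\in\WSRloc{2}{r}(\R^3)$, $\wvel\in\WSRloc{2,1}{r}(\R^3\times\R)$ and $\upres\in\WSRloc{1,0}{r}(\R^3\times\R)$ follow by the embedding of these $X$-spaces into the standard (local) Sobolev spaces. For the inductive step, assume the conclusion for $m-1$. Since the operators $\partial_x^\alpha$ (for $\vvel$) and $\partial_x^\beta\partial_t^\kappa$ (for $\wvel$ and $\upres$) commute with the constant-coefficient linear operator, the differentiated quantities solve the same linear systems, now with right-hand sides $\proj\,\partial_x^\alpha\bp{f-\nsnonlin{\uvel}}$ and $\projcompl\,\partial_x^\beta\partial_t^\kappa\bp{f-\nsnonlin{\uvel}}$. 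By the maximal regularity isomorphism of \cite{mrtpns} it therefore suffices to prove that these right-hand sides belong to $\LR{q}\cap\LR{r}$ for all multi-indices with $\snorm{\alpha}\leq m$ and $\snorm{\beta}+\snorm{\kappa}\leq m$.

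The contribution of $f$ is immediate from $f\in\WSRper{m}{q}\cap\WSRper{m}{r}$. The heart of the argument, and the step I expect to be the main obstacle, is to bound the differentiated nonlinearity $\partial^\gamma\nsnonlin{\uvel}$ in $\LR{q}\cap\LR{r}$. Expanding by the Leibniz rule, each summand is a product $\partial^{\gamma_1}\uvel_j\,\partial_j\partial^{\gamma_2}\uvel$ with $\gamma_1+\gamma_2=\gamma$; the factor carrying the most derivatives is of order at most $m+1$ and is controlled in $\LR{r}$ by the induction hypothesis at level $m-1$, since each $X$-space carries two additional spatial and one additional temporal derivative, so level $m-1$ already supplies all the required derivatives of $\uvel$ in $\LR{r}$. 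The remaining, lower-order factor is placed, via the Sobolev embeddings built into $\xoseen{q,r}$ and $\WSRsigmapercompl{2,1}{q,r}$, into $\LR{\infty}$ or a sufficiently high Lebesgue space, so that H\"older's inequality returns a bound in both $\LR{q}$ and $\LR{r}$. It is precisely these product estimates that force the ranges $q\in(1,\tfrac{4}{3}]$ and $r\in(8,\infty)$: the extra integrability headroom compared with Theorem \ref{ExistenceAndUniquenessThm} is needed to embed the lower-order factor into $\LR{\infty}$ while keeping the product in $\LR{r}$, and the anisotropic space-time derivative count for the $\wvel$-part must be tracked carefully. Once the right-hand sides are shown to lie in $\LR{q}\cap\LR{r}$, the maximal regularity isomorphism yields the claimed membership of $\partial_x^\alpha\vvel$, $\partial_x^\beta\partial_t^\kappa\wvel$ and $\partial_x^\beta\partial_t^\kappa\upres$ in the corresponding $X$-spaces, and the local Sobolev regularity follows once more by embedding, completing the induction. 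Note that no smallness of the solution enters at any stage, consistent with the theorem applying to any strong solution in the class \eqref{ExistenceAndUniquenessThmSolSpace}.
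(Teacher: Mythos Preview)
Your inductive bootstrap strategy and the decomposition into the steady Oseen problem for $\vvel$ and the zero-mean time-periodic problem for $\wvel$ match the paper's approach, and the treatment of the $\vvel$-part via the embeddings of $\xoseen{q,r}(\R^3)$ (in particular $\grad\vvel\in\LR{\infty}$) is correct. However, there is a genuine gap in the step from $m=0$ to $m=1$ for the $\wvel$-part.

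Concretely, after one differentiation the Leibniz expansion of $\nsnonlin{\wvel}$ produces the term $\grad\wvel_j\cdot\grad\wvel$. To put this into $\LR{r}$ by H\"older you need $\grad\wvel\in\LR{\infty}(\grp)$, but this does \emph{not} follow from $\wvel\in\WSRsigmacompl{2,1}{q,r}(\grp)$: the anisotropic space controls $\grad^2\wvel$ and $\partial_t\wvel$ in $\LR{r}$, yet gives no control of the mixed derivative $\partial_t\grad\wvel$, so the four-dimensional Sobolev embedding $\WSR{1}{s}(\grp)\embeds\LR{\infty}(\grp)$ for $s>4$ is unavailable for $\grad\wvel$. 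Your sentence ``the remaining, lower-order factor is placed \ldots\ into $\LR{\infty}$ or a sufficiently high Lebesgue space'' therefore fails precisely here.

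The paper closes this gap by a nontrivial Fourier-multiplier argument on the group $\grp$: it introduces the half-time-derivative $\partial_t^{1/2}$, writes $\partial_t^{1/2}\projcompl(\nsnonlin{\wvel})$ as an $\LR{p}(\grp)$-multiplier applied to $(\partial_t-\Delta)(\wvel\otimes\wvel)$, verifies the multiplier condition via Marcinkiewicz and the de~Leeuw--Edwards--Gaudry transference principle, and thereby obtains $\partial_t^{1/2}\wvel\in\WSRsigmacompl{2,1}{q}\cap\WSRsigmacompl{2,1}{r/2}$. A second multiplier argument then yields $\partial_t\partial_j\wvel\in\LR{r/2}(\grp)$, hence $\grad\wvel\in\WSR{1}{r/2}(\grp)\embeds\LR{\infty}(\grp)$, which is exactly where the hypothesis $r>8$ (so that $r/2>4$) enters. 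Only after this step does the straightforward differentiation-and-Leibniz bootstrap you describe go through.
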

 
As a corollary to Theorem \ref{RegularityThm}, we state that a strong solution is smooth if the data is smooth.
\begin{cor}\label{RegularitySmoothnessCor}
Let $q\in\big(1,\frac{4}{3}\big]$, $r\in(8,\infty)$ and $\lambda\neq0$. If
$f\in\CRper\bp{\R^3\times\R}^3$, 
then a solution  
$(\uvel,\upres)\in\LRloc{1}\bp{\R^3\times\R}^3\times\LRloc{1}\bp{\R^3\times\R}$ to \eqref{intro_nspastbodywholespace}--\eqref{intro_timeperiodicsolution} in the class \eqref{ExistenceAndUniquenessThmSolSpace} (with $\uvel=\vvel+\wvel$) satisfies
$\uvel\in\CRper\bp{\R^3\times\R}^3$ and $\upres\in\CRper\bp{\R^3\times\R}$.
\end{cor}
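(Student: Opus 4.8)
The plan is to let the order of differentiability in Theorem~\ref{RegularityThm} tend to infinity and to convert the resulting Sobolev regularity into classical smoothness by means of Sobolev embedding. Concretely, Theorem~\ref{RegularityThm} applied with a fixed $m$ already yields $\partial_x^\alpha\vvel\in\WSRloc{2}{r}(\R^3)$, $\partial_x^\beta\partial_t^\kappa\wvel\in\WSRloc{2,1}{r}\np{\R^3\times\R}$ and $\partial_x^\beta\partial_t^\kappa\upres\in\WSRloc{1,0}{r}\np{\R^3\times\R}$ for all $\snorm{\alpha}\le m$ and $\snorm{\beta}+\snorm{\kappa}\le m$, provided the datum satisfies $f\in\WSRper{m}{q}\bp{\R^3\times(0,\per)}\cap\WSRper{m}{r}\bp{\R^3\times(0,\per)}$. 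The task therefore splits into verifying this hypothesis for \emph{every} $m\in\N_0$, and then upgrading the information ``all weak derivatives locally in $\LRloc{r}$'' to genuine $\CRi$-smoothness.

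For the first part, I would argue that the smoothness and time-periodicity of $f$, together with the spatial decay implicit in $f$ being admissible data for a solution in the class \eqref{ExistenceAndUniquenessThmSolSpace} (through \eqref{intro_nspastbodywholespace}, $f$ equals a fixed differential-algebraic expression in $\vvel,\wvel,\upres$, which forces $f\in\LR{q}\bp{\R^3\times(0,\per)}\cap\LR{r}\bp{\R^3\times(0,\per)}$), place $f$ in $\WSRper{m}{q}\bp{\R^3\times(0,\per)}\cap\WSRper{m}{r}\bp{\R^3\times(0,\per)}$ for each $m$. Granting this, Theorem~\ref{RegularityThm} may be invoked for every $m\in\N_0$, so that the three local memberships above hold for \emph{all} multi-indices $\alpha,\beta,\kappa$.

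For the second part, iterating these memberships in the spatial and temporal variables --- using the $W^{2,1}$- and $W^{1,0}$-structure to peel off additional derivatives and the parabolic scaling of two spatial against one temporal derivative --- shows that every mixed space-time derivative of $\vvel$, $\wvel$ and $\upres$ lies in $\LRloc{r}$ on $\R^3$, respectively on $\R^3\times\R$. Since the relevant dimensions are $3$ and $4$ and $r>8$ exceeds both, the Sobolev embedding $\WSRloc{k}{r}\embeds\CR{k-1}$ applied for all $k$ gives $\vvel\in\CRi(\R^3)$ and $\wvel,\upres\in\CRi(\R^3\times\R)$. Because $\vvel$ is steady and $\wvel,\upres$ lie in time-periodic spaces, all derivatives agree across the period, whence $\uvel=\vvel+\wvel\in\CRper\bp{\R^3\times\R}^3$ and $\upres\in\CRper\bp{\R^3\times\R}$, as claimed. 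The step I expect to demand the most care is the anisotropic bookkeeping in this last part: one must track the parabolic weighting so that arbitrarily high \emph{mixed} space-time derivatives are genuinely attained before embedding --- the substantive analysis having already been carried out in Theorem~\ref{RegularityThm}.
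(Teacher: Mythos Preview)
Your second part --- upgrading ``all derivatives locally in $\LR{r}$'' to $\CRi$ via Sobolev embedding --- is fine and is exactly what the paper has in mind. The gap is in your first part. You claim that $f\in\CRper(\R^3\times\R)$ together with $f\in\LR{q}\cap\LR{r}$ (the latter read off from the equation and the membership of $(\vvel,\wvel,\upres)$ in the class \eqref{ExistenceAndUniquenessThmSolSpace}) forces $f\in\WSRper{m}{q}\cap\WSRper{m}{r}$ for every $m$. This implication is false: a smooth function in $\LR{q}(\R^3)$ need not have derivatives in $\LR{q}(\R^3)$. For instance, $g(x)=\sin(\e^{|x|})(1+|x|^2)^{-2}$ lies in every $\LR{s}(\R^3)$, $s\geq 1$, is $\CRi$, but $\grad g$ is not in any $\LR{s}$. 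Nor can you bootstrap your way out: to infer $\partial_j f\in\LR{q}\cap\LR{r}$ from the equation you would need third-order spatial derivatives of $\uvel$ and mixed $\partial_t\partial_j$-derivatives of $\wvel$, which are \emph{not} contained in the class \eqref{ExistenceAndUniquenessThmSolSpace}; that is precisely the regularity you are trying to establish, so the argument is circular.

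This is why the paper's proof invokes a \emph{localization} argument rather than a direct global application of Theorem~\ref{RegularityThm}. After cutting off in space, the localized data are smooth with compact spatial support, hence lie in every $\WSRper{m}{q}\cap\WSRper{m}{r}$ automatically; the commutator terms introduced by the cutoff are lower order and controlled by the regularity already in hand. One then runs the regularity machinery (either Theorem~\ref{RegularityThm} applied to the localized system, or standard interior parabolic/elliptic estimates for the linearized problem) on each compact set and iterates. Your write-up omits this step entirely; without it, the hypothesis of Theorem~\ref{RegularityThm} cannot be verified for $m\geq 1$, and the argument does not go through.
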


\section{Notation}

Points in $\R^3\times\R$ are denoted by $(x,t)$ with $x\in\R^3$ and $t\in\R$.
We refer to $x$ as the spatial and to $t$ as the time variable. 

For a sufficiently regular function $u:\R^3\times\R\ra\R$, we put $\partial_i u:=\partial_{x_i} u$.
For any multiindex $\alpha\in\N_0^3$, we let $\partial_x^\alpha\uvel:= \sum_{j=1}^3 \partial_j^{\alpha_j}\uvel$
and put $\snorm{\alpha}:=\sum_{j=1}^3 \alpha_j$. Moreover, for $x\in\R^3$ we let 
$x^\alpha:=x_1^{\alpha_1}x_2^{\alpha_2}x_3^{\alpha_3}$.
Differential operators act only in the spatial variable unless otherwise indicated. For example, 
we denote by $\Delta u$ the Laplacian of $u$ with respect to the spatial variables, that is, 
$\Delta u:=\sum_{j=1}^3\partial_j^2 u$. For a vector field $u:\R^3\times\R\ra\R^3$, we 
let $\Div u:=\sum_{j=1}^3\partial_j u_j$ denote the divergence of $u$.
For $u:\R^3\times\R\ra\R^3$ and $v:\R^3\times\R\ra\R^3$ we let $(\nsnonlinb{u}{v}):\R^3\times\R\ra\R^3$
denote the vector field 
$(\nsnonlinb{u}{v})_i:=\sum_{j=1}^3\partial_j v_i u_j$.

For two vectors $a,b\in\R^3$, we let $a \otimes b\in\R^{3\times 3}$ denote the tensor 
with $(a\otimes b)_{ij}:=a_ib_j$. 
We denote by $\idmatrix$ the identity tensor 
$\idmatrix\in\R^{3\times 3}$.

We use the symbol $\embeds$ to denote an embedding $X\embeds Y$ of one vector space $X$ into another vector space $Y$. 
In the case of topological vector spaces, embeddings are always required to be continuous.
For a vector space $X$ and $A,B\subset X$, we write $X=A\oplus B$ iff $A$ and $B$ are subspaces of $X$ with 
$A\cap B=\set{0}$ and $X=A+B$. We also write $a\oplus b$ for elements of $A\oplus B$.

Constants in capital letters in the proofs and theorems are global, while constants in small letters are local to the proof in which they appear.

\section{Reformulation in a group setting}

We let $\grp$ denote the group
\begin{align*}
\grp:=\R^3\times\R/\per\Z
\end{align*} 
with addition as the group operation. 
Clearly, there is a natural correspondence between $\per$-time-periodic functions defined on $\R^3\times\R$ and functions defined on $\grp$.
We shall take advantage of this correspondence and reformulate
\eqref{intro_nspastbodywholespace}--\eqref{intro_timeperiodicdata} and the main theorems in a setting of vector fields defined on $\grp$. 
For this purpose, we introduce a differentiable structure on $\grp$ and define appropriate Lebesgue and Sobolev spaces.

The group $\grp$, endowed with the canonical topology, is a locally compact abelian group. Consequently, it has a Fourier transform associated to it.
The main advantage of working in a setting of $\grp$-defined functions is the ability to employ this Fourier transform and express solutions to linear systems of 
partial differential equations in terms of Fourier multipliers. 

\subsection{Differentiable structure, distributions and Fourier transform}\label{lt_differentiablestructuresubsection}

The topology and differentiable structure on $\grp$ is inherited from $\R^3\times\R$. More precisely, 
we equip $\grp$ with the quotient topology induced by the canonical quotient mapping
\begin{align*}
\quotientmap :\R^3\times\R \ra \R^3\times\R/\per\Z,\quad \quotientmap(x,t):=(x,[t]).
\end{align*}
Equipped with the quotient topology, $\grp$ becomes a locally compact abelian group. 
We shall use the restriction 
\begin{align*}
\bijection:\R^3\times[0,\per)\ra\grp,\quad \bijection:=\pi_{|\R^3\times[0,\per)}
\end{align*}
to identify $\grp$ with the domain $\R^3\times[0,\per)$. $\bijection$ is clearly a (continuous) bijection. 

Via $\bijection$, one can identify the Haar measure $\dg$ on $\grp$ as the product of the Lebesgue measure on $\R^3$ and the Lebesgue measure on $[0,\per)$.
The Haar measure is unique up-to a normalization factor, which we choose such that
\begin{align*}
\forall\uvel\in\CRc{}(\grp):\quad \int_\grp \uvel(g)\,\dg = \iper\int_0^\per\int_{\R^3} \uvel\circ\bijection(x,t)\,\dx\dt,
\end{align*}
where $\CRc{}(\grp)$ denotes the space of continuous functions of compact support.  
For the sake of convenience, we will omit the $\bijection$ in integrals with respect to $\dx\dt$ of $\grp$-defined functions, that is, instead of 
$\iper\int_0^\per\int_{\R^3} \uvel\circ\bijection(x,t)\,\dx\dt$ we simply write $\iper\int_0^\per\int_{\R^3} \uvel(x,t)\,\dx\dt$.

Next, we define by
\begin{align}\label{lt_smoothfunctionsongrp}
\CRi(\grp):=\setc{\uvel:\grp\ra\R}{\uvel\circ\quotientmap \in\CRi(\R^3\times\R)}
\end{align}
the space of smooth functions on $\grp$. For $\uvel\in\CRi(\grp)$ we define derivatives 
\begin{align*}
\forall(\alpha,\beta)\in\N_0^3\times\N_0:\quad \partial_t^\beta\partial_x^\alpha\uvel := \bb{\partial_t^\beta\partial_x^\alpha (\uvel\circ\quotientmap)}\circ\bijectioninv.
\end{align*}
It is easy to verify for $\uvel\in\CRi(\grp)$ that also $\partial_t^\beta\partial_x^\alpha\uvel\in\CRi(\grp)$. 

With a differentiable structure defined on $\grp$ via \eqref{lt_smoothfunctionsongrp}, we can introduce the space of tempered distributions on $\grp$.
For this purpose, we first recall the Schwartz-Bruhat space of generalized Schwartz functions; see for example \cite{Bruhat61}. More precisely, we define for $\uvel\in\CRi(\grp)$ the semi-norms
\begin{align*}
\rho_{\alpha,\beta,\gamma}(\uvel):=\sup_{(x,t)\in\grp} \snorm{x^\gamma\partial_t^\beta\partial_x^\alpha\uvel(x,t)}\quad
\text{for }(\alpha,\beta,\gamma)\in\N_0^3\times\N_0\times\N_0^3,
\end{align*}
and put 
\begin{align*}
\SR(\grp):=\setc{\uvel\in\CRi(\grp)}{\forall(\alpha,\beta,\gamma)\in\N_0^3\times\N_0\times\N_0^3:\ \rho_{\alpha,\beta,\gamma}(\uvel)<\infty}.
\end{align*}
Clearly, $\SR(\grp)$ is a vector space and $\rho_{\alpha,\beta,\gamma}$ a semi-norm on $\SR(\grp)$. We endow $\SR(\grp)$ with the semi-norm 
topology induced by the family $\setcl{\rho_{\alpha,\beta,\gamma}}{(\alpha,\beta,\gamma)\in\N_0^3\times\N_0\times\N_0^3}$.
The topological dual space $\TDR(\grp)$ of $\SR(\grp)$ is then well-defined. We equip $\TDR(\grp)$ with the weak* topology and refer to it 
as the space of tempered distributions on $\grp$. Observe that both $\SR(\grp)$ and $\TDR(\grp)$ remain closed under multiplication 
by smooth functions that have at most polynomial growth with respect to the spatial variables.

For a tempered distribution $\uvel\in\TDR(\grp)$, distributional derivatives 
$\partial_t^\beta\partial_x^\alpha\uvel\in\TDR(\grp)$ are defined by duality in the usual manner:
\begin{align*}
\forall \psi\in\SR(\grp):\ \linf{\partial_t^\beta\partial_x^\alpha\uvel}{\psi}:=\linf{\uvel}{(-1)^{\snorm{(\alpha,\beta)}}\partial_t^\beta\partial_x^\alpha\psi}. 
\end{align*}
It is easy to verify that $\partial_t^\beta\partial_x^\alpha\uvel$ is well-defined as an element of $\TDR(\grp)$.
For tempered distributions on $\grp$, we keep the convention that differential operators like $\Delta$ and $\Div$ act only in the 
spatial variable $x$ unless otherwise indicated. 

We shall also introduce tempered distributions on $\grp$'s dual group $\dualgrp$. We associate each $(\xi,k)\in\R^3\times\Z$ with 
the character 
$\chi:\grp\ra\CNumbers,\ \chi(x,t):=\e^{ix\cdot\xi+ik\perf t}$
on $\grp$. It is standard to verify that all characters are of this form, and we can thus identify
$\dualgrp = \R^3\times\Z$. By default, $\dualgrp$ is equipped with the compact-open topology, which in this case coincides with the product of the Euclidean topology on $\R^3$ and 
the discrete topology on $\Z$. The Haar measure on $\dualgrp$ is simply the product of the Lebesgue measure on $\R^3$ and the counting measure on $\Z$. 

A differentiable structure on $\dualgrp$ is obtained by introduction of the space 
\begin{align*}
\CRi(\dualgrp):=\setc{\wvel\in\CR{}(\dualgrp)}{\forall k\in\Z:\ \wvel(\cdot,k)\in\CRi(\R^3)}.
\end{align*}
To define the generalized Schwartz-Bruhat space on the dual group $\dualgrp$, we further introduce for $\wvel\in\CRi(\dualgrp)$ the semi-norms
\begin{align*}
\dualrho_{\alpha,\beta,\gamma}(\wvel):= 
\sup_{(\xi,k)\in\dualgrp} \snorm{k^\beta \xi^\alpha \partial_\xi^\gamma \wvel(\xi,k)}
\quad\text{for }(\alpha,\beta,\gamma)\in\N_0^3\times\N_0\times\N_0^3.
\end{align*}
We then put 
\begin{align*}
\begin{aligned} 
\SR(\dualgrp)&:=\setc{\wvel\in\CRi(\dualgrp)}{\forall (\alpha,\beta,\gamma)\in\N_0^3\times\N_0\times\N_0^3:\ \dualrho_{\alpha,\beta,\gamma}(\wvel)<\infty}.
\end{aligned}
\end{align*}
We endow the vector space $\SR(\dualgrp)$ with the semi-norm topology induced by the family
of semi-norms $\setc{\dualrho_{\alpha,\beta,\gamma}}{(\alpha,\beta,\gamma)\in\N_0^3\times\N_0\times\N_0^3}$. The topological dual space  
of $\SR(\dualgrp)$ is denoted by $\TDR(\dualgrp)$. We equip $\TDR(\dualgrp)$ with the weak* topology and refer to it 
as the space of tempered distributions on $\dualgrp$.

So far, all function spaces have been defined as real vector spaces of real functions. 
Clearly, we can define them analogously as complex vector spaces of complex functions. 
When a function space is used in context with the Fourier transform, which we shall 
introduce below, we consider it as a complex vector space. 

The Fourier transform $\FT_\grp$ on $\grp$ is given by
\begin{align*}
\FT_\grp:\LR{1}(\grp)\ra\CR{}(\dualgrp),\quad \FT_\grp(\uvel)(\xi,k):=\ft{\uvel}(\xi,k):=
\iper\int_0^\per\int_{\R^3} \uvel(x,t)\,\e^{-ix\cdot\xi-ik\perf t}\,\dx\dt.
\end{align*}
If no confusion can arise, we simply write $\FT$ instead of $\FT_\grp$.
The inverse Fourier transform is formally defined by 
\begin{align*}
\iFT:\LR{1}(\dualgrp)\ra\CR{}(\grp),\quad \iFT(\wvel)(x,t):=\ift{\wvel}(x,t):=
\sum_{k\in\Z}\,\int_{\R^3} \wvel(\xi,k)\,\e^{ix\cdot\xi+ik\perf t}\,\dxi.
\end{align*}
It is standard to verify that $\FT:\SR(\grp)\ra\SR(\dualgrp)$ is a homeomorphism with $\iFT$ as the actual inverse, provided the Lebesgue measure $\dxi$ is normalized appropriately. 
By duality, $\FT$ extends to a mapping $\TDR(\grp)\ra\TDR(\dualgrp)$. More precisely, we define 
\begin{align*}
\FT:\TDR(\grp)\ra\TDR(\dualgrp),\quad \forall\psi\in\SR(\dualgrp):\ \linf{\FT(\uvel)}{\psi}:=\linf{\uvel}{\FT({\psi})}.
\end{align*}
Similarly, we define
\begin{align*}
\iFT:\TDR(\dualgrp)\ra\TDR(\grp),\quad \forall\psi\in\SR(\grp):\ \linf{\iFT(\uvel)}{\psi}:=\linf{\uvel}{\iFT({\psi})}.
\end{align*}
Clearly $\FT:\TDR(\grp)\ra\TDR(\dualgrp)$ is a homeomorphism with $\iFT$ as the actual inverse.

The Fourier transform in the setting above provides us with a calculus between the differential operators on $\grp$ and the 
polynomials on $\dualgrp$. As one easily verifies, for $\uvel\in\TDR(\grp)$ and $\alpha\in\N_0^3$, $l\in\N_0$ we have
\begin{align*}
\FT\bp{\partial_t^l\partial_x^\alpha\uvel}=i^{l+\snorm{\alpha}}\,\Big(\perf\Big)^l\,k^l\,\xi^\alpha\,\FT(\uvel)
\end{align*} 
as identity in $\TDR(\dualgrp)$.

\subsection{Function spaces}\label{lt_functionspacesSection}

Having introduced smooth functions on $\grp$ in form of the space $\CRi(\grp)$, we define function spaces of $\grp$-defined functions and vector fields corresponding
to the Lebesgue and Sobolev spaces of $\per$-time-periodic functions and vector fields introduced in Section \ref{StatementOfMainResultSection}. 

We start by putting 
\begin{align*}
&\CRci(\grp):=\setc{\uvel\in\CRi(\grp)}{\supp\uvel\text{ is compact}}.
\end{align*}
We let $\LR{q}(\grp)$ denote the usual Lebesgue space with respect to the Haar measure $\dg$, and let $\norm{\cdot}_q$ denote the norm.
It is standard to verify that $\LR{q}(\grp)\subset\TDR(\grp)$.
Classical Sobolev spaces are then defined as  
\begin{align*}
\WSR{k}{q}(\grp):=\setc{\uvel\in\LR{q}(\grp)}{\norm{\uvel}_{k,q}<\infty},
\end{align*}
where $\norm{\cdot}_{k,q}$ is defined exactly as in \eqref{MR_DefOfWSRper}, and the condition $\norm{\uvel}_{k,q}<\infty$ expresses that the distributional derivatives
of $\uvel$ appearing in the norm $\norm{\cdot}_{k,q}$ all belong to $\LR{q}(\grp)$. We note that 
$\WSR{k}{q}(\grp)=\closure{\CRci(\grp)}{\norm{\cdot}_{k,q}}$ and $\LR{q}(\grp)=\closure{\CRci(\grp)}{\norm{\cdot}_{q}}$,
which can be shown by standard arguments.

Next, we let 
\begin{align*}
&\CRcisigma(\grp):=\setc{\uvel\in\CRci(\grp)^3}{\Div\uvel=0},
\end{align*}
and define the Banach spaces 
\begin{align*}
\LRsigma{q}(\grp):=\closure{\CRcisigma(\grp)}{\norm{\cdot}_{q}},\quad
\WSRsigma{2,1}{q}(\grp):= \closure{\CRcisigma(\grp)}{\norm{\cdot}_{2,1,q}}
\end{align*}
of solenoidal vector fields, where the norm $\norm{\cdot}_{2,1,q}$ is defined as in \eqref{MR_DefOfWSRsigmaper}.
It can be shown that
\begin{align}
&\LRsigma{q}(\grp)=\setc{\uvel\in\LR{q}(\grp)^3}{\Div\uvel=0}.\label{lt_densitylemmaLRsigmaCharacterization}
\end{align}
This identity is well-known if the underlying domain is $\R^3$; a proof can be found in \cite[Chapter III.4]{galdi:book1}. 
Simple modifications to this proof (see \cite[Lemma 3.2.1]{habil}) suffice to establish the identity in the case where $\R^3$ is replaced with $\grp$. 
For convenience, we put 
\begin{align*}
&\LRsigma{q,r}(\grp):=\LRsigma{q}(\grp)\cap\LRsigma{r}(\grp),&& \norm{\cdot}_{\LRsigma{q,r}(\grp)}:=\norm{\cdot}_q+\norm{\cdot}_r,\\
&\WSRsigma{2,1}{q,r}(\grp):=\WSRsigma{2,1}{q}(\grp)\cap\WSRsigma{2,1}{r}(\grp),&& \norm{\cdot}_{2,1,q,r}:=\norm{\cdot}_{2,1,q}+\norm{\cdot}_{2,1,r},
\end{align*}
which are obviously Banach spaces in the associated norms.

Recalling \eqref{intro_defofprojGernericExpression}, we define analogously the projection $\proj$  on $\grp$-defined functions:
\begin{align*}
\proj:\CRci(\grp)\ra\CRci(\grp),\quad \proj u(x,t):=\iper\int_0^\per u(x,s)\,\ds 
\end{align*}
and put $\projcompl:=\id-\proj$.
We make note of the following properties:
\begin{lem}\label{ProjInGrpSettingLem}
Let $q\in(1,\infty)$. The projection $\proj:\CRcisigma(\grp)\ra\CRcisigma(\grp)$ 
extends, by continuity, uniquely to a bounded projection $\proj:\LRsigma{q}(\grp)\ra\LRsigma{q}(\grp)$ and to a bounded projection
$\proj:\WSRsigma{2,1}{q}(\grp)\ra\WSRsigma{2,1}{q}(\grp)$. The same is true for $\projcompl$.
\end{lem}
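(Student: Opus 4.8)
The plan is to derive both boundedness claims from a single elementary fact, namely that $\proj$ is an $\LR{q}$-contraction, combined with the two structural observations that $\proj$ commutes with spatial differentiation and annihilates time differentiation. First I would record that $\proj$ genuinely maps the test space into itself, as already asserted before the lemma: for $u\in\CRcisigma(\grp)$, the field $\proj u$ is smooth, its $x$-support lies in the (compact) projection of $\supp u$ onto $\R^3$, it is independent of $t$, and it is solenoidal because spatial differentiation passes through the time-average, giving $\Div\proj u=\proj\Div u=0$. The same interchange yields $\partial_x^\alpha\proj u=\proj\partial_x^\alpha u$ for every $\alpha\in\N_0^3$, whereas $\pt\proj u=0$ since $\proj u$ carries no time dependence. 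These identities are exactly what collapse the anisotropic Sobolev estimate to a purely spatial one.

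Next I would prove the core estimate $\norm{\proj u}_q\le\norm{u}_q$. Since $\iper\,\ds$ is a probability measure on $(0,\per)$, Jensen's inequality gives, pointwise in $x$,
\begin{align*}
\snorm{\proj u(x,t)}^q = \snormL{\iper\int_0^\per u(x,s)\,\ds}^q \le \iper\int_0^\per\snorm{u(x,s)}^q\,\ds .
\end{align*}
Integrating over $\grp$ and using that the left-hand side is $t$-independent, the normalization of the Haar measure turns this into $\norm{\proj u}_q^q\le\norm{u}_q^q$; thus $\proj$ is in fact a contraction on $\LR{q}(\grp)$, hence also on $\LRsigma{q}(\grp)$ by the preceding paragraph. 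Boundedness of $\projcompl=\id-\proj$ then follows from the triangle inequality.

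For the Sobolev estimate I would merely assemble the pieces: applying $\partial_x^\alpha\proj u=\proj\partial_x^\alpha u$, $\pt\proj u=0$, and the contraction property termwise to each summand of the norm $\norm{\cdot}_{2,1,q}$ yields
\begin{align*}
\norm{\proj u}_{2,1,q} \le \norm{u}_{2,1,q},
\end{align*}
so $\proj$ is again a contraction, this time on $\WSRsigma{2,1}{q}(\grp)$. The projection property is inherited rather than proved anew: on $\CRcisigma(\grp)$ one checks directly that $\proj^2=\proj$, because the time-average of a $t$-independent function is that function itself, and since both sides are continuous the identity persists on the closures, whence $\projcompl^2=\projcompl$ as well. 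As $\CRcisigma(\grp)$ is dense in both $\LRsigma{q}(\grp)$ and $\WSRsigma{2,1}{q}(\grp)$ by definition, the continuous extensions exist and are unique.

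I do not anticipate a genuine obstacle. The only points requiring care are the interchange of spatial differentiation with the time-average and the verification that $\proj$ preserves membership in $\CRcisigma(\grp)$, since it is precisely this invariance of the test space that legitimizes extending by density; everything else reduces to Jensen's inequality and the elementary fact that $\proj$ removes all time dependence.
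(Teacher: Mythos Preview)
Your argument is correct and follows essentially the same route as the paper's proof, which merely remarks that boundedness of $\proj$ in both norms ``can easily be verified by employing H\"older's and Minkowski's integral inequality.'' Your use of Jensen's inequality for the convex function $\snorm{\cdot}^q$ against the probability measure $\iper\,\ds$ is just a repackaging of the same estimate (H\"older against the constant function on $(0,\per)$), and your explicit treatment of the commutation $\partial_x^\alpha\proj=\proj\partial_x^\alpha$, the vanishing of $\partial_t\proj u$, the preservation of $\CRcisigma(\grp)$, and the extension of $\proj^2=\proj$ by density fills in details the paper leaves implicit.
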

\begin{proof}
Boundedness of $\proj$ in the norms of $\LRsigma{q}(\grp)$ and $\WSRsigma{2,1}{q}(\grp)$ can easily be verified by employing H\"older's and Minkowski's integral
inequality; see also \cite[Lemma 4.5]{mrtpns}.
\end{proof}

\begin{lem}\label{lt_projextensionl1loc}
$\proj$ extends uniquely to a projection $\proj:\LRloc{1}(\grp)\ra\LRloc{1}(\grp)$. The same is true for 
$\projcompl$. 
\end{lem}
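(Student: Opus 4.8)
The plan is to produce the extension explicitly from the averaging formula that already defines $\proj$ on $\CRci(\grp)$, and then to obtain uniqueness from density of $\CRci(\grp)$ together with continuity in the natural locally convex topology of $\LRloc{1}(\grp)$. Since the time-direction of $\grp=\R^3\times\R/\per\Z$ is compact, $\LRloc{1}(\grp)$ carries the Fréchet topology generated by the seminorms $u\mapsto\norm{u}_{\LR{1}(K\times\R/\per\Z)}$ as $K$ ranges over the compact subsets of $\R^3$. First I would check well-definedness: for $u\in\LRloc{1}(\grp)$, Fubini's theorem ensures that $s\mapsto u(x,s)$ lies in $\LR{1}(0,\per)$ for almost every $x$, so the expression $\proj u(x,t):=\iper\int_0^\per u(x,s)\,\ds$ is meaningful almost everywhere and, crucially, is independent of $t$. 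A direct application of Tonelli's theorem (using that $\iper\per=1$) then yields, for every compact $K\subset\R^3$,
\[
\norm{\proj u}_{\LR{1}(K\times\R/\per\Z)}\leq\norm{u}_{\LR{1}(K\times\R/\per\Z)}.
\]
Hence $\proj u\in\LRloc{1}(\grp)$ and $\proj$ is continuous on $\LRloc{1}(\grp)$, and since this formula reproduces the original definition on $\CRci(\grp)$, it is the desired extension.

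Next I would establish the projection property and transfer everything to $\projcompl$. Because $\proj u$ does not depend on the time variable, re-averaging it over the period leaves it unchanged, which gives $\proj^2=\proj$ on all of $\LRloc{1}(\grp)$. Uniqueness is then immediate: any continuous extension must agree with the one above on the dense subspace $\CRci(\grp)$, hence everywhere. Finally, defining $\projcompl:=\id-\proj$ produces a continuous extension of $\projcompl$, and the elementary identity $\projcompl^2=\id-2\proj+\proj^2=\id-\proj=\projcompl$ shows that it too is a projection, completing the statement for $\projcompl$.

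The hard part — really the only point requiring genuine care — will be the density of $\CRci(\grp)$ in $\LRloc{1}(\grp)$ for the local topology; everything else is Fubini--Tonelli bookkeeping and the trivial algebra of the averaging operator. I expect this density to follow from the usual two-step regularization: first multiply by a smooth spatial cutoff to reduce to compactly supported data, then mollify by convolution on $\grp$ (available through the differentiable structure introduced in Section~\ref{lt_differentiablestructuresubsection}), with both steps converging in each seminorm $\norm{\cdot}_{\LR{1}(K\times\R/\per\Z)}$. This is routine, so I anticipate no substantive difficulty.
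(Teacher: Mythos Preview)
Your argument is correct and follows essentially the same route as the paper: localize to $\LR{1}$ over a compact spatial set times the torus, use the obvious $\LR{1}$-bound for the time average there, and patch these together to get a well-defined projection on $\LRloc{1}(\grp)$. The paper's version is simply terser---it writes the same idea as ``$\proj$ extends uniquely, by continuity, to a bounded projection on $\LR{1}(\B_R\times\R/\per\Z)$ for every $R>0$''---whereas you spell out the Fubini--Tonelli step, the idempotence, and the density argument; none of this is a genuinely different approach, only a more explicit presentation of the same mechanism.
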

\begin{proof}
For any $R>0$, $\proj$ extends uniquely, by continuity, to a bounded projection on $\LR{1}(\B_R\times\R/\per\Z)$. Thus, 
for $\uvel\in\LRloc{1}(\grp)$ the element $\proj\uvel$ is naturally defined in $\LR{1}(\B_R\times\R/\per\Z)$ for any $R>0$.
Consequently, $\proj\uvel$, and thus also $\projcompl\uvel$, are well-defined as elements in $\LRloc{1}(\grp)$. 
\end{proof}

\begin{lem}\label{lt_projprojcomplinnerprodl1locfunctions}
Let $f,g\in\LRloc{1}(\grp)$. Then 
\begin{align}\label{lt_projprojcomplinnerprodl1locfunctionsinnerprod}
\begin{aligned}
\iper\int_0^\per \proj f(x,t) \cdot \projcompl g(x,t)\,\dt=0 \quad\text{for a.e. }x\in\R^3.
\end{aligned}
\end{align}
\end{lem}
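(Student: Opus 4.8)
The plan is to reduce the asserted identity to a one-dimensional statement in the time variable at a.e.\ fixed $x$, and then to exploit two elementary features of $\proj$: it produces functions that are constant in $t$, and it is a projection, $\proj^2=\proj$, so that $\proj\projcompl=0$.

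First I would invoke the product structure of the Haar measure on $\grp$, which via $\bijection$ factors as the Lebesgue measure on $\R^3$ times the Lebesgue measure on $(0,\per)$. Since $f,g\in\LRloc{1}(\grp)$ means $f,g\in\LR{1}\bp{\B_R\times\R/\per\Z}$ for every $R>0$, Fubini's theorem gives that for a.e.\ $x\in\B_R$ the slices $f(x,\cdot)$ and $g(x,\cdot)$ lie in $\LR{1}(0,\per)$; exhausting $\R^3$ by $\B_R$, $R\in\N$, this holds for a.e.\ $x\in\R^3$. For each such $x$ the averages
\begin{align*}
\proj f(x,t)=\iper\int_0^\per f(x,s)\,\ds=:a(x),\qquad \proj g(x,t)=\iper\int_0^\per g(x,s)\,\ds=:b(x)
\end{align*}
are finite and independent of $t$, while $\projcompl g(x,t)=g(x,t)-b(x)$ still lies in $\LR{1}(0,\per)$. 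Consequently the integrand $\proj f(x,\cdot)\cdot\projcompl g(x,\cdot)$ is the product of the $t$-independent quantity $a(x)$ with the $\LR{1}(0,\per)$ function $\projcompl g(x,\cdot)$, hence integrable over $(0,\per)$, so the left-hand side of \eqref{lt_projprojcomplinnerprodl1locfunctionsinnerprod} is meaningful for a.e.\ $x$.

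Then, for such a fixed $x$, I would pull the $t$-independent factor $a(x)$ out of the time integral and use that $\projcompl g$ has vanishing time-average:
\begin{align*}
\iper\int_0^\per \proj f(x,t)\cdot\projcompl g(x,t)\,\dt = a(x)\cdot\iper\int_0^\per\bp{g(x,t)-b(x)}\,\dt = a(x)\cdot\bp{b(x)-b(x)}=0.
\end{align*}
Equivalently, the inner time-integral equals $\proj\bp{\projcompl g}(x)$, which vanishes since $\projcompl=\id-\proj$ and $\proj^2=\proj$ give $\proj\projcompl=\proj-\proj^2=0$; this is precisely the claimed identity.

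The argument is essentially elementary, and I do not anticipate a genuine obstacle. The only point deserving care is the Fubini reduction in the first step, which is what guarantees -- for a.e.\ $x$ -- both that the pointwise manipulations are legitimate and that the left-hand side is itself well-defined; once this routine measure-theoretic bookkeeping is in place, the vanishing follows immediately from the projection identity $\proj^2=\proj$.
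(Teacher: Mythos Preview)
Your proof is correct and matches the paper's approach exactly: the paper's proof is the single sentence ``This is a simple consequence of the fact that $\proj f$ is independent on $t$,'' and you have simply filled in the routine measure-theoretic justification (Fubini on $\B_R\times(0,\per)$) and the elementary computation that this remark encodes.
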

\begin{proof}
This is a simple consequence of the fact that $\proj f$ is independent on $t$.
\end{proof}

\begin{lem}\label{lt_projsymbollem} 
The projections $\proj$ and $\projcompl$ extend uniquely, by continuity, to continuous operators $\proj:\TDR(\grp)\ra\TDR(\grp)$ and $\projcompl:\TDR(\grp)\ra\TDR(\grp)$ with
\begin{align}
&\proj f = \iFT_\grp\bb{\projsymbol\cdot \ft{f}},\label{lt_projsymbollemproj}\\
&\projcompl f = \iFT_\grp\bb{(1-\projsymbol)\cdot \ft{f}}\label{lt_projsymbollemprojcompl},
\end{align}
where
\begin{align*}
\projsymbol:\dualgrp\ra\CNumbers,\quad
\projsymbol(\xi,k):=
\begin{pdeq}
&1 && \tif k=0,\\
&0 && \tif k\neq0.
\end{pdeq}
\end{align*}
\end{lem}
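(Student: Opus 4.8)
The plan is to take the multiplier formulas \eqref{lt_projsymbollemproj}--\eqref{lt_projsymbollemprojcompl} as the \emph{definition} of the extended operators and then verify the two things needed: that they are well-defined and continuous on $\TDR(\grp)$, and that they agree with the already-defined $\proj$ and $\projcompl$ on $\CRci(\grp)$; uniqueness is then automatic by density. First I would exploit the special structure of the symbol: since $\Z$ carries the discrete topology, $\projsymbol$ is constant in the spatial frequency $\xi$ for each fixed $k$, so $\projsymbol\in\CRi(\dualgrp)$, it is bounded, and all its $\xi$-derivatives vanish. Consequently $\partial_\xi^\gamma(\projsymbol\,\psi)=\projsymbol\,\partial_\xi^\gamma\psi$ for $\psi\in\SR(\dualgrp)$, whence $\dualrho_{\alpha,\beta,\gamma}(\projsymbol\,\psi)\leq\dualrho_{\alpha,\beta,\gamma}(\psi)$ for every $(\alpha,\beta,\gamma)\in\N_0^3\times\N_0\times\N_0^3$. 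Thus multiplication by $\projsymbol$ is a continuous endomorphism of $\SR(\dualgrp)$, and by transposition a weak* continuous endomorphism of $\TDR(\dualgrp)$. Composing with the homeomorphisms $\FT:\TDR(\grp)\ra\TDR(\dualgrp)$ and $\iFT:\TDR(\dualgrp)\ra\TDR(\grp)$ shows that the right-hand side of \eqref{lt_projsymbollemproj} defines a continuous operator on $\TDR(\grp)$; the same argument with the bounded symbol $1-\projsymbol$ handles \eqref{lt_projsymbollemprojcompl} (equivalently, one uses $\projcompl=\id-\proj$).

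Next I would verify that this multiplier operator restricts to the original projection on test functions. For $f\in\CRci(\grp)\subset\SR(\grp)$ the essential ingredient is the orthogonality of the temporal characters, $\iper\int_0^\per\e^{-ik\perf t}\,\dt=\kroneckerdelta_{k,0}$, obtained by evaluating the elementary integral. Since $\proj f$ is independent of $t$, this gives $\ft{\proj f}(\xi,k)=\kroneckerdelta_{k,0}\int_{\R^3}\proj f(x)\,\e^{-ix\cdot\xi}\,\dx$, while Fubini's theorem applied to the definition of $\FT$ yields $\ft f(\xi,0)=\int_{\R^3}\bp{\iper\int_0^\per f(x,t)\,\dt}\e^{-ix\cdot\xi}\,\dx=\int_{\R^3}\proj f(x)\,\e^{-ix\cdot\xi}\,\dx$. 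Combining these identities gives $\ft{\proj f}(\xi,k)=\kroneckerdelta_{k,0}\,\ft f(\xi,0)=\projsymbol(\xi,k)\,\ft f(\xi,k)$, that is, $\FT(\proj f)=\projsymbol\cdot\ft f$. Applying $\iFT$ and using that it inverts $\FT$ on $\SR(\grp)$ produces $\proj f=\iFT\bb{\projsymbol\cdot\ft f}$ for every $f\in\CRci(\grp)$, which is exactly \eqref{lt_projsymbollemproj} on the dense subspace; the identity for $\projcompl$ follows from $\projcompl=\id-\proj$.

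Finally, uniqueness of the continuous extension rests on the density of $\CRci(\grp)$ in $\TDR(\grp)$ for the weak* topology, which follows from the dense embeddings $\CRci(\grp)\embeds\SR(\grp)\embeds\TDR(\grp)$: two weak* continuous operators on $\TDR(\grp)$ that coincide on the dense subspace $\CRci(\grp)$ must coincide everywhere. The main—indeed essentially the only—technical point is the continuity of multiplication by $\projsymbol$ on $\SR(\dualgrp)$ and its dual, and here it is crucial that $\projsymbol$ is constant in $\xi$ so that all its $\xi$-derivatives vanish and the Schwartz-Bruhat seminorms $\dualrho_{\alpha,\beta,\gamma}$ are preserved; the discreteness of $\Z$ is what makes the piecewise-in-$k$ symbol a legitimate element of $\CRi(\dualgrp)$ in the first place.
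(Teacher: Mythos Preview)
Your argument is correct and follows essentially the same route as the paper: verify the identity $\FT(\proj f)=\projsymbol\,\ft f$ on Schwartz functions by direct computation using the orthogonality of the temporal characters, and then extend by duality. The paper's proof is simply a terser version of yours---it carries out the computation on $\SR(\grp)$ and then says ``the formula extends to $\TDR(\grp)$ by duality'' without spelling out, as you do, why multiplication by $\projsymbol$ is continuous on $\SR(\dualgrp)$ and hence on $\TDR(\dualgrp)$.
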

\begin{proof}
We simply observe for $f\in\SR(\grp)$ that
\begin{align*}
\FT_\grp\bb{\proj f}(\xi,k) &= \iper\int_0^\per\int_{\R^n}\iper\int_0^\per f(x,s)\,\ds\,e^{-ix\cdot\xi-i\perf k t}\,\dx\dt\\
&= \projsymbol(\xi,k) \int_{\R^n}\iper\int_0^\per f(x,s)\,\ds\,e^{-ix\cdot\xi}\,\dx
= \projsymbol(\xi,k)\, \ft{f}(\xi,0) = \projsymbol(\xi,k)\, \ft{f}(\xi,k).
\end{align*}
The formula extends to $f\in\TDR(\grp)$ by duality.
\end{proof}

Having introduced the projections $\proj$ and $\projcompl$, we can now define
\begin{align*}
&\LRsigmacompl{q}(\grp) := \projcompl\LRsigma{q}(\grp),\\
&\LRsigmacompl{q,r}(\grp) := \projcompl\LRsigma{q,r}(\grp)\ = \LRsigmacompl{q}(\grp)  \cap\ \LRsigmacompl{r}(\grp),\\
&\WSRsigmacompl{2,1}{q}(\grp):= \projcompl\WSRsigma{2,1}{q}(\grp),\\
&\WSRsigmacompl{2,1}{q,r}(\grp):= \projcompl\WSRsigmacompl{2,1}{q,r}(\grp) = \WSRsigmacompl{2,1}{q}(\grp)\ \cap\ \WSRsigmacompl{2,1}{r}(\grp).
\end{align*}
Since $\proj u$ is $t$-independent, it is easy to verify that $\proj\LRsigma{q}(\grp)=\LRsigma{q}(\R^3)$. It follows that $\proj$ induces the decomposition
\begin{align}\label{lrsigmaDecomp}
\LRsigma{q}(\grp)=\LRsigma{q}(\R^3)\oplus\LRsigmacompl{q}(\grp).
\end{align}

Next, we introduce the Helmholtz projection on the Lebesgue space $\LR{q}(\grp)^3$ by a classical Fourier-multiplier expression: 
\begin{lem}\label{lt_HelmholtzProjDefLem}
The Helmholtz projection 
\begin{align}\label{lt_HelmholtzProjDefDef}
\hproj: \LR{2}(\grp)^3\ra\LR{2}(\grp)^3,\quad \hproj f := \iFT_\grp\Bb{\Bp{\idmatrix - \frac{\xi\otimes\xi}{\snorm{\xi}^2}} \ft{f}}
\end{align}
extends for any $q\in[1,\infty)$ uniquely to a continuous projection $\hproj:\LR{q}(\grp)^3\ra\LR{q}(\grp)^3$. Moreover,
$\hproj\LR{q}(\grp)^3=\LRsigma{q}(\grp)$.
\end{lem}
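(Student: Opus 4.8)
The plan is to reduce the statement to the classical Helmholtz projection on $\R^3$. The decisive observation is that the symbol $M(\xi):=\idmatrix-\frac{\xi\otimes\xi}{\snorm{\xi}^2}$ depends only on the spatial frequency $\xi\in\R^3$ and not on the temporal frequency $k\in\Z$. Consequently, $\hproj$ ought to coincide with the operator that applies the classical $\R^3$-Helmholtz projection $\hproj_{\R^3}$ in the spatial variable to each time slice. First I would make this precise on the Schwartz--Bruhat space: for $f\in\SR(\grp)^3$, writing the inner $x$-integral of $\FT_\grp$ as an $\R^3$-Fourier transform of the slice $f(\cdot,t)$ and using Fubini in the $t$-integral yields
\begin{align*}
\FT_\grp\bb{\hproj_{\R^3}^{(x)} f}(\xi,k)=M(\xi)\,\ft{f}(\xi,k),
\end{align*}
where $\hproj_{\R^3}^{(x)}$ denotes the operator applying $\hproj_{\R^3}$ in $x$ for every fixed $t$. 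Hence $\hproj$ and $\hproj_{\R^3}^{(x)}$ agree on $\SR(\grp)^3$.

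Next I would establish boundedness. The entries $\delta_{ij}-\xi_i\xi_j/\snorm{\xi}^2$ of $M$ are smooth and homogeneous of degree $0$ on $\R^3\setminus\set{0}$, so $M$ satisfies the Mikhlin--H\"ormander condition and $\hproj_{\R^3}$ is bounded on $\LR{q}(\R^3)^3$ for every $q\in(1,\infty)$ (classical; see \cite[Chapter III]{galdi:book1}). Identifying $\LR{q}(\grp)^3$ with $\LR{q}\bp{\R/\per\Z;\LR{q}(\R^3)^3}$ and using that $\hproj$ acts slice-wise in $t$, a Fubini/Minkowski estimate
\begin{align*}
\norm{\hproj u}_q=\Bp{\iper\int_0^\per \norm{\hproj_{\R^3}u(\cdot,t)}_{\LR{q}(\R^3)}^q\,\dt}^{1/q}\leq\norm{\hproj_{\R^3}}\,\norm{u}_q
\end{align*}
transfers the bound to $\hproj$ on $\LR{q}(\grp)^3$ with the same operator norm. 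Since $\SR(\grp)$ is dense in $\LR{q}(\grp)$, the continuous extension is unique, which settles the first assertion.

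It remains to verify the projection and range properties. Since $M(\xi)$ is the orthogonal projection of $\R^3$ onto $\xi^\perp$, it satisfies the pointwise identities $M(\xi)^2=M(\xi)$ and $\xi^\transpose M(\xi)=0$; the first gives $\hproj^2=\hproj$ on $\SR(\grp)^3$ and hence, by continuity, on all of $\LR{q}(\grp)^3$. For the range, the derivative--multiplier calculus together with $\xi^\transpose M(\xi)=0$ yields $\FT_\grp(\Div\hproj f)=i\,\xi^\transpose M(\xi)\ft{f}=0$, so $\Div\hproj f=0$ and therefore, by the characterization \eqref{lt_densitylemmaLRsigmaCharacterization}, $\hproj\LR{q}(\grp)^3\subseteq\LRsigma{q}(\grp)$. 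Conversely, if $u\in\LRsigma{q}(\grp)$ then $\Div u=0$ means $\xi\cdot\ft{u}=0$, whence $M(\xi)\ft{u}=\ft{u}$ and $\hproj u=u$; thus $\LRsigma{q}(\grp)\subseteq\hproj\LR{q}(\grp)^3$, and equality follows.

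The one genuinely delicate point is the justification of the symbol manipulations beyond $\LR{1}$, where $\FT_\grp$ is no longer given by a convergent integral: every identity involving $M(\xi)$ must first be proved on $\SR(\grp)$ and then propagated to $\LR{q}(\grp)$ via density and the boundedness established above. Once the reduction to $\hproj_{\R^3}$ and the slice-wise action in $t$ are in place, the remaining arguments are routine.
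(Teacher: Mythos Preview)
Your approach is essentially identical to the paper's: both reduce to the classical $\R^3$-Helmholtz projection via the observation that the multiplier depends only on $\xi$, and then verify the projection and range properties from the algebraic identities $M(\xi)^2=M(\xi)$ and $\xi^\transpose M(\xi)=0$ together with the characterization \eqref{lt_densitylemmaLRsigmaCharacterization}. The paper is merely terser about the boundedness transfer; note that both arguments (yours via Mikhlin--H\"ormander, the paper's via the classical $\R^3$ result) only cover $q\in(1,\infty)$, so the stated range $q\in[1,\infty)$ in the lemma appears to be a typographical slip rather than something either proof actually establishes.
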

\begin{proof}
The Fourier multiplier on the right-hand side in \eqref{lt_HelmholtzProjDefDef} is identical to the multiplier of the classical Helmholtz projection 
in the Euclidean $\R^3$-setting. Boundedness of $\hproj$ on $\LR{q}(\grp)^3$ can thus be derived from boundedness of the classical Helmholtz projection 
on $\LR{q}(\R^3)^3$. One readily verifies that $\hproj$ is a projection,
and that $\Div\hproj f=0$. By \eqref{lt_densitylemmaLRsigmaCharacterization}, $\hproj\LR{q}(\grp)^3\subset\LRsigma{q}(\grp)$ follows. On the other hand, 
since $\Div f=0$ implies $\xi_j\ft{f}_j=0$, we have $\hproj f=f$ for all $f\in\LRsigma{q}(\grp)$. We conclude $\hproj\LR{q}(\grp)^3=\LRsigma{q}(\grp)$. 
\end{proof}

Since $\hproj:\LR{q}(\grp)^3\ra\LR{q}(\grp)^3$ is a continuous projection, it decomposes $\LR{q}(\grp)$ into a direct sum 
\begin{align*}
\LR{q}(\grp)=\LRsigma{q}(\grp)\oplus \gradspace{q}(\grp)
\end{align*}
of closed subspaces with
\begin{align*}
\gradspace{q}(\grp) := \bp{\id-\hproj}\LR{q}(\grp)^3.
\end{align*}
We further define
\begin{align*}
&\gradspace{q,r}(\grp):=\gradspace{q}(\grp)\cap\gradspace{q}(\grp),\quad \norm{\cdot}_{\gradspace{q,r}(\grp)}:=\norm{\cdot}_q+\norm{\cdot}_r,
\end{align*}
which is clearly a Banach space with respect to the associated norm.

We introduce the convention that a $\grp$-defined function $\uvel:\grp\ra\R$ can be considered an element of a function space of $\R^3$-defined functions, say
$\xspacegeneric(\R^3)$, if and only if $\uvel$ is independent on $t$, and the restriction $u_{|\R^3\times\set{0}}$ belongs to $\xspacegeneric(\R^3)$.
In this context, we shall need, in addition to the spaces $\xoseen{q,r}(\R^3)$ defined in \eqref{MR_DefOfxoseenqr}, also the homogeneous Sobolev spaces $\DSR{m}{q}(\R^3)$ and their associated semi-norms:
\begin{align*}
\begin{aligned}
&\DSR{m}{q}(\R^3):=\setc{u\in\LRloc{1}(\R^3)}{\forall\alpha\in\N_0^3\text{ with } \snorm{\alpha}=m:\ \partial^\alpha u \in\LR{q}(\R^3)},\\
&\snorm{u}_{m,q}:=\bigg( \sum_{\snorm{\alpha}=m}\, \int_{\R^3} \snorm{\partial^\alpha u(x)}^q\,\dx \bigg)^\frac{1}{q}.
\end{aligned}
\end{align*}
Moreover, we will deploy the space of solenoidal vector fields 
\begin{align*}
\LRsigma{q}(\R^3):=\closure{\CRcisigma(\R^3)}{\norm{\cdot}_q}
\end{align*}
and
\begin{align*}
\LRsigma{q,r}(\R^3):=\LRsigma{q}(\R^3)\cap\LRsigma{r}(\R^3),\quad \norm{\cdot}_{\LRsigma{q,r}(\R^3)}:=\norm{\cdot}_q+\norm{\cdot}_r\,,
\end{align*}
which is obviously a Banach space in the given norm.

Finally, we define for $\grp$-defined functions the norm $\norm{\cdot}_{\xpres{q,r}}$ exactly as in \eqref{MR_DefOfXpres} and let
\begin{align*}
\xpres{q,r}(\grp):=\setc{\upres\in\LRloc{1}(\grp)}{\norm{\upres}_{\xpres{q,r}}<\infty}.
\end{align*}

\subsection{Reformulation}

Since the differentiable structure on $\grp$ is inherited from $\R^3\times\R$, we can formulate \eqref{intro_nspastbodywholespace}--\eqref{intro_timeperiodicdata}
as a system of partial differential equations on $\grp$:
\begin{align}\label{ss_nsongrp}
\begin{pdeq}
&\partial_t\uvel -\Delta\uvel -\rey\partial_1\uvel + \grad\upres + \nsnonlin{\uvel}= f && \tin\grp,\\
&\Div\uvel =0 && \tin\grp,
\end{pdeq}
\end{align} 
with unknowns $\uvel:\grp\ra\R^3$ and $\upres:\grp\ra\R$, and data $f:\grp\ra\R^3$.
Observe that in this formulation the periodicity conditions are 
not needed anymore. 
Indeed, all functions defined on $\grp$ are by construction $\per$-time-periodic.

Based on the new formulation above, we obtain the following new formulations of
Theorem \ref{ExistenceAndUniquenessThm} and Theorem \ref{RegularityThm} in a setting of $\grp$-defined vector fields. 
For convenience, in the new formulation we split Theorem \ref{ExistenceAndUniquenessThm} into three parts: the statement of existence, the balance of energy, and 
the statement of uniqueness. 

\begin{thm}\label{ss_StrongSolThm}
Let $q\in(1,\frac{4}{3}\big]$, $r\in(4,\infty)$ and $\lambda\neq 0$. 
There is a constant $\Cc[ss_StrongSolThmEps]{eps}>0$ such that for any 
$f\in\LR{q}(\grp)^3\cap\LR{r}(\grp)^3$ with
\begin{align}\label{ss_StrongSolThmDataCond}
\norm{f}_{\LR{q}(\grp)} + \norm{f}_{\LR{r}(\grp)} \leq \const{ss_StrongSolThmEps}  
\end{align}
there is a solution $(\uvel,\upres)$ to \eqref{ss_nsongrp} with $\uvel=\vvel+\wvel$ and
\begin{align}\label{ss_StrongSolThmSolSpace}
(\vvel,\wvel,\upres)\in\xoseen{q,r}(\R^3)\times\WSRsigmacompl{2,1}{q,r}(\grp)\times\xpres{q,r}(\grp).
\end{align}
\end{thm}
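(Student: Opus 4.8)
The plan is to solve the decoupled linear problem via the splitting $\uvel = \vvel + \wvel$ established in \cite{mrtpns}, then recover the full nonlinear solution through a fixed-point argument (contraction mapping) valid for small data. The key structural observation is that applying the projections $\proj$ and $\projcompl$ (Lemma \ref{ProjInGrpSettingLem}) together with the Helmholtz projection $\hproj$ (Lemma \ref{lt_HelmholtzProjDefLem}) decouples \eqref{ss_nsongrp} into a steady-state Oseen problem for $\vvel := \proj\uvel$ and a purely time-periodic problem (with vanishing time-average) for $\wvel := \projcompl\uvel$. Indeed, since $\proj$ commutes with the spatial differential operators $\Delta$, $\partial_1$, $\Div$, and with $\hproj$, projecting the momentum equation kills $\partial_t\vvel$ (as $\vvel$ is $t$-independent) and sends $\projcompl$ to annihilate the time-average, leaving two separate linear systems coupled only through the nonlinear term $\nsnonlin{\uvel}$ on the right-hand side.

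First I would set up the linear solution operator. For the steady part, I invoke the maximal-regularity isomorphism for the Oseen operator: the map sending $g \in \gradspace{q,r}(\grp)^\perp \cap \LRsigma{q,r}(\R^3)$-data to the solution $\vvel \in \xoseen{q,r}(\R^3)$ is bounded, which is exactly the content of the $\xoseen{q,r}$-norm in \eqref{MR_DefOfxoseenqr} (the $\lambda$-weighted terms reflect the Oseen scaling). For the time-periodic part with vanishing average, the corresponding operator $\nslinoprtp$ is an isomorphism onto $\WSRsigmacompl{2,1}{q,r}(\grp)$; this is where the group Fourier transform $\FT_\grp$ enters, since on $\dualgrp = \R^3 \times \Z$ the symbol of $\partial_t - \Delta - \lambda\partial_1$ is bounded away from the degeneracy because the projection $1 - \projsymbol$ removes the $k=0$ frequency. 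Both are results imported from \cite{mrtpns}; I would state the combined linear estimate as: for $(\fs, \ftp) \in (\LRsigma{q,r}(\R^3)) \times (\LRsigmacompl{q,r}(\grp))$ there is a unique $(\vvel,\wvel,\upres)$ in the space \eqref{ss_StrongSolThmSolSpace} solving the linearization, with norm controlled by $\norm{\fs}_{q,r} + \norm{\ftp}_{q,r}$.

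Next I would recast the full problem as a fixed point. Writing $\mathcal{L}$ for the (bounded, invertible) linear solution operator and $\mathcal{N}(\uvel) := -\hproj(\nsnonlin{\uvel})$ for the projected nonlinearity, solving \eqref{ss_nsongrp} amounts to finding a fixed point of $\uvel \mapsto \mathcal{L}\bp{\hproj f + \mathcal{N}(\uvel)}$ in the Banach space $\maxregspacelinnssigma{q,r} := \xoseen{q,r}(\R^3) \oplus \WSRsigmacompl{2,1}{q,r}(\grp)$. The crucial analytic estimate is the bilinear bound
\begin{align*}
\norm{\hproj(\nsnonlinb{\uvel}{\tuvel})}_{\LRsigma{q,r}(\R^3) \times \LRsigmacompl{q,r}(\grp)} \leq \Cc{C}\, \norm{\uvel}_{\maxregspacelinnssigma{q,r}}\, \norm{\tuvel}_{\maxregspacelinnssigma{q,r}},
\end{align*}
which must be verified separately on the four products arising from $\uvel = \vvel + \wvel$, $\tuvel = \tvvel + \twvel$. \textbf{This bilinear estimate is the main obstacle.} It requires combining the embedding properties encoded in the $\xoseen{q,r}$-norm (the $\LR{2q/(2-q)}$ and $\LR{4/(4-q)}$ gradient control, which are precisely the anisotropic Oseen decay rates) with the parabolic embeddings for $\WSRsigmacompl{2,1}{q,r}(\grp)$, and the Hölder exponents must close for the admissible range $q \in (1,\tfrac{4}{3}]$, $r \in (4,\infty)$ — this is exactly why those ranges are imposed. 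The term $\proj(\nsnonlinb{\wvel}{\wvel})$ feeding into the steady equation, and the time-fluctuating cross terms feeding into the time-periodic equation, are the delicate ones, as they mix the two scales. Once this bound holds, the standard Banach fixed-point theorem gives a unique small solution provided $\const{ss_StrongSolThmEps}$ is chosen so that $\Cc{C}\norm{f} < \tfrac14$ (or a similar threshold), and the pressure $\upres \in \xpres{q,r}(\grp)$ is recovered from $(\id - \hproj)(\nsnonlin{\uvel} - f)$ via the de Rham / Helmholtz complement, completing the construction.
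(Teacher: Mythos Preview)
Your proposal is correct and follows essentially the same route as the paper: Helmholtz-project to remove the pressure, invoke the combined linear isomorphism $\ALTP$ from Lemma~\ref{ss_MaxRegThmLem} (which already packages the steady Oseen and the $\projcompl$-part isomorphisms from \cite{mrtpns}), establish the bilinear estimate $\norm{\hproj\bb{\nsnonlin{(\vvel+\wvel)}}}_{q,r}\leq C\norm{(\vvel,\wvel)}^2$ by splitting into the four cross-terms and using the embeddings of Lemma~\ref{ss_StrongSolThmEmbeddingLem}, run Banach's fixed-point theorem on a small ball, and finally recover $\upres$ via Lemma~\ref{ss_PressureMappingLem}. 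The only cosmetic difference is that the paper works directly with the product-space operator $\ALTP$ rather than explicitly splitting the equation into its $\proj$- and $\projcompl$-components, but the content is identical.
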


\begin{defn}\label{ss_UniquenessClassDef}
Let $f\in\LRloc{q}(\grp)^3$.
We say that $\weakuvel\in\LRloc{1}(\grp)^3$ is a \emph{physically reasonable weak solution} to \eqref{ss_nsongrp}
if, considered as a mapping $t\ra\weakuvel(\cdot,t)$, it satisfies  
$\weakuvel\in\LR{2}\bp{(0,\per);\DSRNsigma{1}{2}(\R^3)}$,
$\projcompl\weakuvel\in\LR{\infty}\bp{(0,\per);\LR{2}(\R^3)^3}$,
$\weakuvel$ satisfies \eqref{UniquenessClassDefDefofweaksol} for all $\Phi\in\CRcisigma(\grp)$, and
$\weakuvel$ satisfies the energy inequality \eqref{UniquenessClassDefEnergyIneq}.
\end{defn}

\begin{thm}\label{ss_EnergyEqThm}
Let $q\in\big(1,\frac{4}{3}\big]$, $r\in(4,\infty)$, $\rey\neq 0$ and $f\in\LR{q}(\grp)^3\cap\LR{r}(\grp)^3$.
A solution $(\uvel,\upres)$ to \eqref{ss_nsongrp} in the class \eqref{ss_StrongSolThmSolSpace} (with $\uvel=\vvel+\wvel$)
satisfies the energy equation \eqref{EnergyEqEE}.
\end{thm}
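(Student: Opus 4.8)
The plan is to derive \eqref{EnergyEqEE} by testing the strong-solution identity \eqref{ss_nsongrp} with the solution $\uvel$ itself. Formally, pairing the momentum equation with $\uvel$ and integrating over $\grp$, the time-derivative term $\frac12\iper\int_{\R^3}\int_0^\per\partial_t\snorm{\uvel}^2\,\dt\,\dx$ vanishes by time-periodicity; the drift, pressure, and convective terms vanish on account of $\Div\uvel=0$ (each being, up to a sign, the integral of a spatial divergence: $\tfrac{\rey}{2}\partial_1\snorm{\uvel}^2$, $\Div(\upres\uvel)$, and $\tfrac12\Div(\snorm{\uvel}^2\uvel)$, respectively); and the viscous term yields $\int_\grp\snorm{\grad\uvel}^2\,\dg$. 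The two points requiring justification are that every product above is absolutely integrable, and that the integrations by parts produce no contribution from spatial infinity. The latter is genuinely delicate because the steady part $\vvel$ behaves like $\snorm{x}^{-1}$ and is therefore \emph{not} in $\LR{2}(\R^3)$.

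First I would record the integrability furnished by the solution class \eqref{ss_StrongSolThmSolSpace}. Since $r>3$, the bound $\grad^2\vvel\in\LR{r}(\R^3)$ together with $\grad\vvel\in\LR{4/(4-q)}$ gives (Morrey plus decay) $\grad\vvel\in\LR{\infty}$, and combined with $\vvel\in\LR{2q/(2-q)}$ this yields $\vvel\in\LR{\infty}(\R^3)$ with $\vvel(x)\to0$ as $\snorm{x}\to\infty$; interpolating $\grad\vvel\in\LR{4/(4-q)}\cap\LR{\infty}$ gives $\grad\vvel\in\LR{2}(\R^3)$. For the oscillatory part, $q<2<r$ and $\wvel\in\WSRsigmacompl{2,1}{q,r}(\grp)$ give, by interpolation, $\wvel,\grad\wvel\in\LR{2}(\grp)$ and $\wvel\in\LR{3}(\grp)$; moreover $\wvel$ is bounded by anisotropic Sobolev embedding. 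Hence $\grad\uvel\in\LR{2}(\grp)$, so the left-hand side of \eqref{EnergyEqEE} is finite, and using that the conjugate exponent satisfies $q'\ge2q/(2-q)$ (so $\vvel\in\LR{q'}$) together with the complementary pairing for $\wvel$ one checks $f\cdot\uvel\in\LR{1}(\grp)$. By Lemma \ref{lt_projprojcomplinnerprodl1locfunctions} the steady/oscillatory cross terms on both sides of the identity integrate to zero over the period, which further simplifies the bookkeeping.

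To make the formal computation rigorous I would introduce a spatial cut-off $\chi_R(x)=\chi(x/R)$ with $\chi\in\CRci(\R^3)$, $\chi\equiv1$ on $\B_1$ and $\supp\chi\subset\B_2$, and pair \eqref{ss_nsongrp} (an almost-everywhere identity on $\grp$) with $\chi_R\uvel$, which has compact spatial support and is bounded. The time-derivative term is exactly zero because $\chi_R$ is $t$-independent and $\uvel$ is periodic. After integration by parts the viscous term equals $\int_\grp\chi_R\snorm{\grad\uvel}^2\,\dg$ plus an error of order $\tfrac1R\int_{\B_{2R}\setminus\B_R}\snorm{\grad\uvel}\snorm{\uvel}$; the pressure, drift, and convective terms reduce to errors controlled by $\tfrac1R\int_{\B_{2R}\setminus\B_R}\snorm{\upres}\snorm{\uvel}$, $\tfrac1R\int_{\B_{2R}\setminus\B_R}\snorm{\uvel}^2$, and $\tfrac1R\int_{\B_{2R}\setminus\B_R}\snorm{\uvel}^3$. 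Splitting $\uvel=\vvel+\wvel$, the viscous, drift, and convective annular errors arising from $\wvel$ vanish because the relevant products (of factors in $\LR{2}(\grp)$ or $\LR{3}(\grp)$) belong to $\LR{1}(\grp)$; the errors from $\vvel$ are estimated by H\"older on the annulus using $\vvel\in\LR{2q/(2-q)}\cap\LR{\infty}$ and $\grad\vvel\in\LR{2}$, the resulting powers of $R$ all being negative for $q\le\tfrac43$, $r>4$. Letting $R\to\infty$, with $\int_\grp\chi_R\snorm{\grad\uvel}^2\to\int_\grp\snorm{\grad\uvel}^2$ by dominated convergence, leaves $\int_\grp\snorm{\grad\uvel}^2\,\dg=\int_\grp f\cdot\uvel\,\dg$, which is \eqref{EnergyEqEE} after unfolding the normalization of $\dg$.

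The main obstacle is precisely the control of these far-field error terms for the steady Oseen component. Because $\vvel\notin\LR{2}(\R^3)$, none of the integrations by parts in $x$ may be taken for granted, and the crude estimate of $\tfrac1R\int_{\B_{2R}\setminus\B_R}\snorm{\vvel}^2$ (drift term) is in fact borderline. Here I would bypass the cut-off and exploit the $x_1$-divergence structure directly: since $\partial_1\vvel\in\LR{q}$ and $\vvel\in\LR{q'}$, the product $\partial_1\vvel\cdot\vvel=\tfrac12\partial_1\snorm{\vvel}^2$ lies in $\LR{1}(\R^3)$, so Fubini together with $\vvel(x)\to0$ forces $\int_{\R^3}\partial_1\snorm{\vvel}^2\,\dx=0$ with no cut-off needed, and the remaining cross terms are handled using $\wvel\in\LR{2}(\grp)$. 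The pressure error is the other sensitive point and is treated through the space-time integrability built into the $\xpres{q,r}$ norm, namely $\upres\in\LR{q}\bp{(0,\per);\LR{3q/(3-q)}(\R^3)}$, combined with the decay of $\uvel$. Once these annular limits are established, the energy equality follows.
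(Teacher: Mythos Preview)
Your approach is essentially sound but differs from the paper's in two structural choices. The paper does not test the full equation with $\uvel$ under a spatial cut-off; instead it applies $\proj$ and $\projcompl$ (followed by the Helmholtz projection $\hproj$) to \eqref{ss_nsongrp} to obtain separate equations for $\vvel$ and for $\wvel$, then multiplies each by its own unknown and integrates. Because $\hproj$ is applied first, the pressure disappears entirely and never has to be estimated at infinity; and because the two pieces are tested separately, every product that arises is directly checked to lie in $\LR{1}$ (for instance $\Delta\vvel\cdot\vvel\in\LR{1}(\R^3)$ via $\Delta\vvel\in\LR{q}$ and $\vvel\in\LR{q'}$, exactly the pairing you identified for the drift term), so no cut-off and no annular-error analysis is needed. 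After integrating by parts in each identity, the two are added and the cross term $\int\grad\vvel:\grad\wvel$ vanishes by \eqref{lt_projprojcomplinnerprodl1locfunctionsinnerprod}, giving \eqref{EnergyEqEE}. Your route via $\chi_R\uvel$ is more systematic and makes the far-field issues explicit, but it forces you to control the pressure boundary term $\tfrac{1}{R}\int_{\B_{2R}\setminus\B_R}\snorm{\upres}\snorm{\uvel}$, which you treat only sketchily; the paper sidesteps this by killing $\grad\upres$ with $\hproj$ before testing. Both arguments arrive at the same identity, but the paper's split-and-project device trades the cut-off bookkeeping for a clean $\LR{1}$ verification of each term.
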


\begin{thm}\label{ss_UniquenessThm}
Let $q\in(1,\frac{6}{5}\big]$, $r\in(4,\infty)$, $\lambda\neq 0$ and $f\in\LR{q}(\grp)^3\cap\LR{r}(\grp)^3$. There is a constant 
$\Cc[ss_UniquenessThmConst]{eps}>0$ such that if 
$\norm{f}_q + \norm{f}_r \leq \const{ss_UniquenessThmConst}$,  
then a solution $(\uvel,\upres)$ to \eqref{ss_nsongrp} in the class \eqref{ss_StrongSolThmSolSpace} (with $\uvel=\vvel+\wvel$) 
is unique in the class of \emph{physically reasonable weak solutions} characterized by Definition \ref{ss_UniquenessClassDef}.
\end{thm}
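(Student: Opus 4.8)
The plan is to prove uniqueness via an energy argument comparing the strong solution to an arbitrary physically reasonable weak solution.

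The plan is to prove uniqueness by a Serrin-type energy argument: I compare an arbitrary physically reasonable weak solution $\weakuvel$ with the strong solution $\uvel=\vvel+\wvel$ from Theorem \ref{ss_StrongSolThm}, set $w:=\weakuvel-\uvel$, and derive an energy inequality for $w$ whose nonlinear right-hand side can be absorbed into the dissipation once the data, and hence $\uvel$, is small enough. Concretely, I aim to establish
\begin{align*}
\int_0^\per\int_{\R^3}\snorm{\grad w}^2\,\dx\dt \le -\int_0^\per\int_{\R^3}(\nsnonlinb{w}{\uvel})\cdot w\,\dx\dt,
\end{align*}
and then to bound the right-hand side by $C\,\mathcal{N}(\uvel)\int_0^\per\int_{\R^3}\snorm{\grad w}^2\,\dx\dt$ with $\mathcal{N}(\uvel)\to0$ as the data tends to zero.

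The first and most delicate step is to produce two scalar identities by testing each solution against the other. I would insert $\uvel$ as a test field in the generalized formulation \eqref{UniquenessClassDefDefofweaksol} satisfied by $\weakuvel$, and pair the strong equation \eqref{ss_nsongrp} for $\uvel$ against $\weakuvel$. Since \eqref{UniquenessClassDefDefofweaksol} is only given for $\Phi\in\CRcisigma(\grp)$, the use of the non-compactly-supported field $\uvel$ must be justified by approximation: one regularizes and truncates $\vvel\in\xoseen{q,r}(\R^3)$ and $\wvel\in\WSRsigmacompl{2,1}{q,r}(\grp)$ by solenoidal test fields (a spatial cut-off followed by a Bogovskii correction to restore $\Div=0$) and passes to the limit. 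Here the structural properties of the class in Definition \ref{ss_UniquenessClassDef}, namely $\weakuvel\in\LR{2}\bp{(0,\per);\DSRNsigma{1}{2}(\R^3)}$ together with $\projcompl\weakuvel\in\LR{\infty}\bp{(0,\per);\LR{2}(\R^3)^3}$, are exactly what guarantees that every space--time pairing --- in particular the term $\int_0^\per\int_{\R^3}\weakuvel\cdot\partial_t\uvel$, which I would reduce to $\int_0^\per\int_{\R^3}\projcompl\weakuvel\cdot\partial_t\wvel$ via Lemma \ref{lt_projprojcomplinnerprodl1locfunctions} --- is absolutely convergent. I expect this justification to be the main obstacle.

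Granting the two identities, the algebra is routine. Adding them, the two copies of $\int_0^\per\int_{\R^3}\weakuvel\cdot\partial_t\uvel$ appear with opposite signs and cancel; the drift terms combine to $-\rey\int_0^\per\int_{\R^3}\partial_1(\weakuvel\cdot\uvel)$ and vanish after integration by parts; and, writing $\weakuvel=\uvel+w$ and repeatedly using $\Div\uvel=\Div w=0$ together with $\int(\nsnonlinb{a}{b})\cdot c=-\int(\nsnonlinb{a}{c})\cdot b$, the two nonlinear contributions collapse to $\int_0^\per\int_{\R^3}(\nsnonlinb{w}{w})\cdot\uvel=-\int_0^\per\int_{\R^3}(\nsnonlinb{w}{\uvel})\cdot w$. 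Subtracting twice the resulting identity for $\int_0^\per\int_{\R^3}\grad\weakuvel:\grad\uvel$ from $\int_0^\per\int_{\R^3}\snorm{\grad\weakuvel}^2+\snorm{\grad\uvel}^2$, and inserting the energy equality \eqref{EnergyEqEE} for $\uvel$ (Theorem \ref{ss_EnergyEqThm}) and the energy inequality \eqref{UniquenessClassDefEnergyIneq} for $\weakuvel$, the two data integrals $\int_0^\per\int_{\R^3}f\cdot\uvel$ and $\int_0^\per\int_{\R^3}f\cdot\weakuvel$ drop out and the displayed energy inequality for $w$ follows.

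It remains to estimate $\int_0^\per\int_{\R^3}\snorm{w}^2\snorm{\grad\uvel}$ and to absorb it. I would split $\grad\uvel=\grad\vvel+\grad\wvel$ and, where needed, decompose $w=\proj w\oplus\projcompl w$, using $\proj w\in\DSRNsigma{1}{2}(\R^3)\embeds\LR{6}(\R^3)$ and the extra integrability $\projcompl w\in\LR{\infty}\bp{(0,\per);\LR{2}(\R^3)^3}\cap\LR{2}\bp{(0,\per);\DSRNsigma{1}{2}(\R^3)}$ provided by the class. Hölder's inequality in space--time together with the Sobolev embedding and the norms encoded in $\xoseen{q,r}(\R^3)$ and $\WSRsigmacompl{2,1}{q,r}(\grp)$ then bound each piece by a constant times $\oseennorm{\vvel}{q,r}+\norm{\wvel}_{2,1,q,r}$ multiplied into $\int_0^\per\int_{\R^3}\snorm{\grad w}^2$; the restrictions $q\in(1,\tfrac{6}{5}]$ and $r\in(4,\infty)$ are precisely what makes these Hölder exponents admissible. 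Since Theorem \ref{ss_StrongSolThm} yields $\oseennorm{\vvel}{q,r}+\norm{\wvel}_{2,1,q,r}\le C\bp{\norm{f}_q+\norm{f}_r}$, choosing $\const{ss_UniquenessThmConst}$ small enough makes the prefactor strictly less than $1$, forcing $\int_0^\per\int_{\R^3}\snorm{\grad w}^2=0$. Finally $\grad w=0$ gives $\proj w=0$ (a constant in $\LR{6}$) and $\projcompl w=0$ (a constant in $\LR{2}$), hence $w=0$ and $\weakuvel=\uvel$.
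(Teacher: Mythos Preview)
Your overall architecture matches the paper's proof: both test the weak formulation of $\weakuvel$ with $\uvel$, test the strong equation for $\uvel$ with $\weakuvel$, add, invoke the energy equality \eqref{EnergyEqEE} for $\uvel$ and the energy inequality \eqref{UniquenessClassDefEnergyIneq} for $\weakuvel$, and absorb the remaining trilinear term for small data. However, there is a substantive gap in your justification of the two cross-tested identities.

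The paper does \emph{not} rely on a cut-off/Bogovski\u\i\ approximation of $\uvel$. Instead, it first invokes Lemma~\ref{ss_AddRegOfWeakSolLem} to upgrade the weak solution itself: one obtains $\proj\weakuvel\in\xoseen{q}(\R^3)$ and $\projcompl\weakuvel\in\WSRsigmacompl{2,1}{q}(\grp)$. This regularity boost is precisely what makes each pairing lie in $\LR{1}(\grp)$, for instance $\partial_1(\proj\weakuvel)\cdot\vvel$: a~priori $\partial_1(\proj\weakuvel)$ is only in $\LR{2}(\R^3)$ while $\vvel\in\LR{s}(\R^3)$ for $s\ge\frac{2q}{2-q}>2$, so the product is not obviously integrable; after Lemma~\ref{ss_AddRegOfWeakSolLem} one has $\partial_1(\proj\weakuvel)\in\LR{q}(\R^3)\cap\LR{2}(\R^3)$ and the pairing follows since $\bigl(\frac{2q}{2-q}\bigr)'\in(q,2)$. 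Your plan to approximate $\uvel$ by compactly supported solenoidal fields and pass to the limit using only the raw class properties of $\weakuvel$ runs into exactly this obstruction for the drift and convective terms, and the Bogovski\u\i\ correction on growing annuli does not obviously close the estimates in the required exponents. You should incorporate Lemma~\ref{ss_AddRegOfWeakSolLem} (or reproduce its content) before the cross-testing step.

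A second, smaller point: your absorption estimate targets $\int_0^\per\int_{\R^3}\snorm{w}^2\snorm{\grad\uvel}$, but the available control on $\grad\uvel$ (namely $\grad\vvel\in\LR{\frac{4q}{4-q}}\cap\LR{\infty}$) does not place $\grad\uvel$ in $\LR{3/2}$ uniformly in time, which is what pairing with $\norm{w(\cdot,t)}_6^2$ would require. The paper instead integrates by parts back to $\int_0^\per\int_{\R^3}(\nsnonlinb{w}{w})\cdot\uvel$ and estimates by $\esssup_{t}\norm{\uvel(\cdot,t)}_3\,\int_0^\per\norm{\grad w(\cdot,t)}_2^2\,\dt$; the bound $\norm{\uvel}_{\LR{\infty}((0,\per);\LR{3}(\R^3))}\le C\norm{(\vvel,\wvel)}$ is exactly where the restriction $q\le\frac{6}{5}$ (so that $\frac{2q}{2-q}\le 3$) enters.
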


\begin{thm}\label{ss_RegularityThm}
Let $q\in\big(1,\frac{4}{3}\big]$, $r\in(8,\infty)$, $\lambda\neq0$ and $m\in\N$. If
$f\in\WSR{m}{q}(\grp)^3\cap\WSR{m}{r}(\grp)^3$,
then a solution $(\uvel,\upres)$ to \eqref{ss_nsongrp} in the class \eqref{ss_StrongSolThmSolSpace} (with $\uvel=\vvel+\wvel$) satisfies
\begin{align*}
\begin{aligned}
&\forall(\alpha,\beta,\kappa)\in\N_0^3\times\N_0^3\times\N_0,\ \snorm{\alpha}\leq m,\ \snorm{\beta}+\snorm{\kappa}\leq m:\\
&\qquad(\partial_x^\alpha\vvel,\partial_x^\beta\partial_t^\kappa\wvel,\partial_x^\beta\partial_t^\kappa\upres)\in 
\xoseen{q,r}(\R^3)\times\WSRsigmacompl{2,1}{q,r}(\grp)\times\xpres{q,r}(\grp).
\end{aligned}
\end{align*}
\end{thm}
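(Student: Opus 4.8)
The plan is to exploit the maximal-regularity theory of the linearized problem together with a bootstrapping argument driven by the smoothness of $f$. The key structural observation is that the decomposition $\uvel=\vvel+\wvel$ diagonalizes the problem via the projections $\proj,\projcompl$ of Lemma \ref{ProjInGrpSettingLem}: applying $\proj$ to \eqref{ss_nsongrp} yields a steady Oseen problem for $\vvel=\proj\uvel$ governed by the space $\xoseen{q,r}(\R^3)$, while applying $\projcompl$ yields a purely time-periodic problem for $\wvel=\projcompl\uvel$ with vanishing time-average, governed by $\WSRsigmacompl{2,1}{q,r}(\grp)$. Since the pressure is recovered from $\uvel$ by the Helmholtz projection $\hproj$ of Lemma \ref{lt_HelmholtzProjDefLem}, controlling the velocity in both scales automatically controls the pressure in $\xpres{q,r}(\grp)$.

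First I would set up the induction on $m$, the base case $m=0$ being the existence statement of Theorem \ref{ss_StrongSolThm}. For the inductive step, I would differentiate \eqref{ss_nsongrp} and show that each spatial derivative $\partial_x^\alpha\uvel$ (and each space-time derivative $\partial_x^\beta\partial_t^\kappa\uvel$) solves a linear Navier-Stokes-type system with the \emph{same} linear operator on the left-hand side but a new right-hand side built from $\partial$-derivatives of $f$ together with commutator terms arising from the nonlinearity $\nsnonlin{\uvel}$. Concretely, the Leibniz rule gives
\begin{align*}
\partial_x^\alpha\np{\nsnonlin{\uvel}} = \nsnonlinb{\uvel}{\partial_x^\alpha\uvel} + \sum_{0<\gamma\leq\alpha}\binom{\alpha}{\gamma}\nsnonlinb{\partial_x^\gamma\uvel}{\partial_x^{\alpha-\gamma}\uvel},
\end{align*}
so the highest-order term $\nsnonlinb{\uvel}{\partial_x^\alpha\uvel}$ is treated perturbatively (absorbed into the left-hand side using the smallness of $\uvel$ guaranteed by \eqref{ss_StrongSolThmDataCond}), while the lower-order products are known to lie in $\LR{q}\cap\LR{r}$ by the inductive hypothesis combined with the embeddings furnished by the norms in \eqref{MR_DefOfxoseenqr} and \eqref{MR_DefOfWSRsigmaper}. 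The same scheme applies after applying $\proj$ and $\projcompl$ separately, using Lemma \ref{lt_projsymbollem} to commute the projections past the derivatives in the Fourier-multiplier picture.

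The decisive step is verifying that the forcing terms land in the correct function spaces so that the linear maximal-regularity estimate can be reapplied at each stage; for this I would lean on the maximal-regularity results for the linearization established in \cite{mrtpns}, invoked on both the $\LR{q}$- and the $\LR{r}$-scale. The technical heart is the product estimate: one must show that every term $\nsnonlinb{\partial_x^\gamma\uvel}{\partial_x^{\alpha-\gamma}\uvel}$ with $0<\gamma<\alpha$ is controlled in $\LR{q,r}(\grp)$ by the inductively available norms, which requires pairing the anisotropic integrability built into $\oseennorm{\cdot}{q,r}$ for the steady part $\vvel$ with the parabolic scaling of $\WSRsigmacompl{2,1}{q,r}(\grp)$ for the oscillatory part $\wvel$, and then applying Hölder together with the relevant Sobolev embeddings. \textbf{The main obstacle} I anticipate is precisely this bookkeeping of mixed integrability exponents across the Oseen and time-periodic scales—ensuring the interpolated Hölder exponents close up to land in both $\LR{q}$ and $\LR{r}$ simultaneously, since the restrictions $q\in(1,\tfrac{4}{3}]$ and $r\in(8,\infty)$ are exactly what is needed to make these product estimates and the subsequent embeddings consistent at every order of differentiation.
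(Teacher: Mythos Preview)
Your proposal has two genuine gaps that prevent it from going through as written.

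First, the theorem carries \emph{no} smallness hypothesis: it asserts regularity for \emph{any} solution in the class \eqref{ss_StrongSolThmSolSpace}, not just the small one produced by Theorem \ref{ss_StrongSolThm}. Your plan to treat the top-order Leibniz term $\nsnonlinb{\uvel}{\partial_x^\alpha\uvel}$ perturbatively by absorption, invoking \eqref{ss_StrongSolThmDataCond}, is therefore not available. The paper's argument never uses smallness; it instead establishes enough additional a~priori integrability of the solution to control all Leibniz terms directly as data.

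Second, and more seriously, even the lower-order Leibniz terms do not close at the first step of your induction. After one spatial derivative the nonlinearity produces products of the type $\partial_j\wvel_l\,\partial_l\wvel_i$, and to place these in $\LR{q}(\grp)$ you need $\grad\wvel\in\LR{\infty}(\grp)$. But membership in $\WSRsigmacompl{2,1}{q,r}(\grp)$ gives $\grad\wvel,\grad^2\wvel\in\LR{r}(\grp)$ without any control on $\partial_t\grad\wvel$, so the four-dimensional Sobolev embedding does \emph{not} yield $\grad\wvel\in\LR{\infty}(\grp)$. The same obstruction blocks the time derivative: $\nsnonlinb{\uvel}{\partial_t\wvel}$ requires $\grad\partial_t\wvel$, which is absent from the base regularity. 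The paper circumvents this by introducing a \emph{half} time derivative $\partial_t^{1/2}$ as a Fourier multiplier on $\grp$. Writing $\partial_t^{1/2}\bb{\projcompl(\nsnonlin{\wvel})}$ via the symbol $(1-\projsymbol)(i\tfrac{2\pi}{\per}k)^{1/2}(i\xi_l)/(\snorm{\xi}^2+i\tfrac{2\pi}{\per}k)$ acting on $(\partial_t-\Delta)[\wvel_j\wvel_l]$, and proving this symbol is an $\LR{p}(\grp)$-multiplier via the de~Leeuw/Edwards--Gaudry transference principle, one obtains $\partial_t^{1/2}\wvel\in\WSRsigmacompl{2,1}{q}\cap\WSRsigmacompl{2,1}{r/2}$ and hence $\partial_t\partial_j\wvel\in\LR{r/2}(\grp)$. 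Combined with $\grad^2\wvel\in\LR{r}$ this puts $\grad\wvel\in\WSR{1}{r/2}(\grp)$, and precisely the hypothesis $r>8$ (so $r/2>4$) then gives $\grad\wvel\in\LR{\infty}(\grp)$. Only \emph{after} this intermediate step does the direct differentiation-and-bootstrap scheme you describe actually run; the paper then proceeds essentially along your lines, taking full space and time derivatives of \eqref{ss_RegularityThmEqForwvel} and \eqref{ss_RegularityThmEqForvvel} and invoking Lemmas \ref{lt_TPOseenMappingThmLem}, \ref{lt_TPOseenUniqueness}, \ref{lt_OseenMappingThmLem}, \ref{lt_OseenUniquenessLem}. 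Your outline is missing this half-derivative bridge, which is the technical heart of the argument and the reason for the constraint $r>8$.
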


The main challenge will now be to prove the theorems above, which will be done in the next section.
The advantage obtained at this point, by the reformulation of these theorems in the setting on the group $\grp$,
is the ability by means of the Fourier transform $\FT_\grp$ to employ multiplier theory.

\section{Proof of main theorems}

We will now prove Theorem \ref{ExistenceAndUniquenessThm} and Theorem \ref{RegularityThm}.
The proofs reduce to simple verifications once we have established Theorem \ref{ss_StrongSolThm}--Theorem \ref{ss_RegularityThm}. 
First, however, we recall the maximal regularity results for the linearization of \eqref{ss_nsongrp} from \cite{mrtpns}. 
Based on the linear theory, the existence of a strong solution as stated in Theorem \ref{ss_StrongSolThm} will then be shown with the contraction mapping
principle. Also the regularity properties in Theorem \ref{ss_RegularityThm} will be established from estimates obtained for the linear problem.

We shall repeatedly make use of the following embedding property of homogeneous Sobolev spaces:
\begin{lem}\label{HomSobEmbHighq}
Let $q\in(1,\infty)$ and $r\in(3,\infty)$. Then 
\begin{align}\label{HomSobEmbHighqEmb}
\forall u\in\DSR{1}{r}(\R^3)\cap\LR{q}(\R^3):\quad \norm{u}_\infty \leq \Cc[HomSobEmbHighqConst]{C} \bp{\snorm{u}_{1,r}+\norm{u}_q}
\end{align}
with $\Cclast{C}=\Cclast{C}(q,r)$.	
\end{lem}
\begin{proof}
See \cite[Remark II.7.2]{galdi:book1}.
\end{proof}

\begin{lem}\label{ss_StrongSolThmEmbeddingLem}
Let $q\in\big(1,\frac{4}{3}\big]$ and $r\in(4,\infty)$. Then every $\vvel\in\xoseen{q,r}(\R^3)$ satisfies
\begin{align}
&\norm{\grad\vvel}_\infty\leq \Cc[ss_StrongSolThmEmbeddingLemConstGradvinfty]{C}\norm{\vvel}_{\xoseen{q,r}(\R^3)},\label{ss_StrongSolThmEmbeddingLemGradvinfty}\\
&\norm{\grad\vvel}_r\leq \Cc[ss_StrongSolThmEmbeddingLemConstGradvr]{C}\norm{\vvel}_{\xoseen{q,r}(\R^3)},\label{ss_StrongSolThmEmbeddingLemGradvr}\\
&\norm{\vvel}_\infty\leq \Cc[ss_StrongSolThmEmbeddingLemConstvinfty]{C}\norm{\vvel}_{\xoseen{q,r}(\R^3)},
\label{ss_StrongSolThmEmbeddingLemvinfty}\\
&\norm{\grad\vvel}_2\leq \Cc[ss_StrongSolThmEmbeddingLemConstGradv2]{C}\norm{\vvel}_{\xoseen{q,r}(\R^3)}.
\label{ss_StrongSolThmEmbeddingLemGradv2}
\end{align}
Moreover, every $\wvel\in\WSRsigmacompl{2,1}{q,r}(\grp)$ satisfies
\begin{align}
\norm{\wvel}_\infty \leq \Cc[ss_StrongSolThmEmbeddingLemConstwinfty]{C}\norm{\wvel}_{2,1q,r}.\label{ss_StrongSolThmEmbeddingLemwinfty}
\end{align}
\end{lem}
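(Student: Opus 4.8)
The plan is to derive the four estimates for $\vvel$ from Lemma \ref{HomSobEmbHighq} together with elementary $\LR{p}$-interpolation, exploiting that the norm $\oseennorm{\vvel}{q,r}$ controls $\vvel$ in $\LR{\frac{2q}{2-q}}(\R^3)$, $\grad\vvel$ in $\LR{\frac{4}{4-q}}(\R^3)$, and $\grad^2\vvel$ in $\LR{q}(\R^3)\cap\LR{r}(\R^3)$. The powers of $\snorm{\rey}$ weighting these terms are harmless because $\rey\neq0$ is fixed, so all constants below may depend on $\rey$ as well as on $q,r$.

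First I would prove \eqref{ss_StrongSolThmEmbeddingLemGradvinfty}. Since $\grad^2\vvel\in\LR{r}(\R^3)$ we have $\grad\vvel\in\DSR{1}{r}(\R^3)$, and $\grad\vvel\in\LR{\frac{4}{4-q}}(\R^3)$ by definition of the norm. As $r>4>3$, Lemma \ref{HomSobEmbHighq} applied componentwise to $\grad\vvel$ with the exponent $\frac{4}{4-q}\in(1,\infty)$ yields $\norm{\grad\vvel}_\infty\leq C\bp{\norm{\grad^2\vvel}_r+\norm{\grad\vvel}_{\frac{4}{4-q}}}$, and both terms on the right are bounded by $\oseennorm{\vvel}{q,r}$. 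Next, \eqref{ss_StrongSolThmEmbeddingLemGradvr} and \eqref{ss_StrongSolThmEmbeddingLemGradv2} follow by interpolating $\grad\vvel$ between $\LR{\frac{4}{4-q}}(\R^3)$ and $\LR{\infty}(\R^3)$: since $\frac{4}{4-q}\leq\frac32<2<4<r$, both $2$ and $r$ lie in the open range $(\frac{4}{4-q},\infty)$, so $\norm{\grad\vvel}_p\leq\norm{\grad\vvel}_{\frac{4}{4-q}}^{\theta}\,\norm{\grad\vvel}_\infty^{1-\theta}$ for $p\in\set{2,r}$ with the appropriate $\theta\in(0,1)$, and the right-hand side is controlled using the norm and \eqref{ss_StrongSolThmEmbeddingLemGradvinfty}.

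For \eqref{ss_StrongSolThmEmbeddingLemvinfty}, estimate \eqref{ss_StrongSolThmEmbeddingLemGradvr} shows $\vvel\in\DSR{1}{r}(\R^3)$, while $\vvel\in\LR{\frac{2q}{2-q}}(\R^3)$ by the norm, with $\frac{2q}{2-q}\in(2,4]\subset(1,\infty)$. A second application of Lemma \ref{HomSobEmbHighq} then gives $\norm{\vvel}_\infty\leq C\bp{\norm{\grad\vvel}_r+\norm{\vvel}_{\frac{2q}{2-q}}}$, which is again bounded by $\oseennorm{\vvel}{q,r}$ after using \eqref{ss_StrongSolThmEmbeddingLemGradvr}. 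This completes the $\vvel$-estimates; note the dependency chain \eqref{ss_StrongSolThmEmbeddingLemGradvinfty}$\Rightarrow$\eqref{ss_StrongSolThmEmbeddingLemGradvr},\eqref{ss_StrongSolThmEmbeddingLemGradv2} and \eqref{ss_StrongSolThmEmbeddingLemGradvr}$\Rightarrow$\eqref{ss_StrongSolThmEmbeddingLemvinfty}.

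Finally, for \eqref{ss_StrongSolThmEmbeddingLemwinfty} I would invoke an anisotropic, parabolic-scaling Sobolev embedding on $\grp=\R^3\times\torus$, in which a time derivative is weighted like two spatial derivatives. The space underlying $\norm{\cdot}_{2,1,r}$ then has effective smoothness $2-\frac{3+2}{r}=2-\frac5r$, which is strictly positive for $r>4>\frac52$, so the embedding into $\LR{\infty}$ holds and $\norm{\wvel}_\infty\leq C\norm{\wvel}_{2,1,r}\leq C\norm{\wvel}_{2,1,q,r}$. I expect this last estimate to be the main obstacle: unlike the purely spatial bounds for $\vvel$, it mixes space and time regularity, and the classical parabolic embedding is formulated on $\R^3\times\R$ rather than on the compact-in-time group $\grp$. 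The transfer is carried out by passing to the $\per$-periodic representative of $\wvel$ on $\R^3\times\R$ and localizing in time with a smooth cut-off (equivalently, a partition of unity adapted to $\torus$); the vanishing time-average encoded in $\projcompl$ is not needed for this step.
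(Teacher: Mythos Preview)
Your argument for the four $\vvel$-estimates matches the paper's proof essentially line by line: Lemma~\ref{HomSobEmbHighq} for \eqref{ss_StrongSolThmEmbeddingLemGradvinfty} and \eqref{ss_StrongSolThmEmbeddingLemvinfty}, then interpolation between $\LR{\frac{4q}{4-q}}$ (the paper writes $\frac{4q}{4-q}$ in the proof rather than the $\frac{4}{4-q}$ you read off the definition; either exponent lies below $2$ for $q\le\frac43$, so the argument is unaffected) and $\LR{\infty}$ for \eqref{ss_StrongSolThmEmbeddingLemGradvr} and \eqref{ss_StrongSolThmEmbeddingLemGradv2}, in the same dependency order you describe.

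For \eqref{ss_StrongSolThmEmbeddingLemwinfty} you overcomplicate matters. The paper does not invoke any anisotropic parabolic embedding or a periodic-extension-plus-cutoff argument. It simply observes that the $\norm{\cdot}_{2,1,r}$ norm dominates the \emph{isotropic} $\WSR{1}{r}$ norm on $\grp$ (first-order derivatives in both $x$ and $t$ are already present), and then uses the classical isotropic Sobolev embedding $\WSR{1}{r}\bp{\R^3\times(0,\per)}\embeds\LR{\infty}\bp{\R^3\times(0,\per)}$ on the four-dimensional slab, valid precisely for $r>4$. The transfer to $\grp$ is immediate because lifting by $\bijection$ gives $\WSR{1}{r}(\grp)\embeds\WSR{1}{r}\bp{\R^3\times(0,\per)}$. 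Your parabolic route would also work, but the isotropic one is shorter and explains exactly why the threshold is $r>4$.
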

\begin{proof}\newCCtr[c]{ss_StrongSolThmEmbeddingLem}
Recall $\eqref{HomSobEmbHighqEmb}$ and observe that 
\begin{align*}
\norm{\grad\vvel}_\infty\leq \const{HomSobEmbHighqConst}\bp{\snorm{\vvel}_{2,r}+\norm{\grad\vvel}_{\frac{4q}{4-q}}}\leq 
\const{HomSobEmbHighqConst}\norm{\vvel}_{\xoseen{q,r}(\R^3)},
\end{align*}
which implies \eqref{ss_StrongSolThmEmbeddingLemGradvinfty}.
It follows that $\grad\vvel\in\LR{\frac{4q}{4-q}}(\R^3)\cap\LR{\infty}(\R^3)$ and consequently, since $\frac{4q}{4-q}<r<\infty$, by interpolation that
\begin{align*}
\norm{\grad\vvel}_r \leq \Cc{ss_StrongSolThmEmbeddingLem}\bp{\norm{\grad\vvel}_\infty + \norm{\grad\vvel}_\frac{4q}{4-q} } 
\leq \Cclast{ss_StrongSolThmEmbeddingLem} \norm{\vvel}_{\xoseen{q,r}(\R^3)}.
\end{align*}
This shows \eqref{ss_StrongSolThmEmbeddingLemGradvr}.
With \eqref{ss_StrongSolThmEmbeddingLemGradvr} at our disposal, we again employ \eqref{HomSobEmbHighqEmb} and find that
\begin{align*}
\norm{\vvel}_\infty \leq \const{HomSobEmbHighqConst}\bp{\norm{\grad\vvel}_r + \norm{\vvel}_\frac{2q}{2-q} } 
\leq \Cc{ss_StrongSolThmEmbeddingLem} \norm{\vvel}_{\xoseen{q,r}(\R^3)}.
\end{align*}
Thus \eqref{ss_StrongSolThmEmbeddingLemvinfty} follows. To show \eqref{ss_StrongSolThmEmbeddingLemGradv2}, 
observe, since $q\leq\frac{4}{3}$ and thus $\frac{4q}{4-q}\leq 2$, that
\begin{align*}
\norm{\grad\vvel}_2 \leq \Cc{ss_StrongSolThmEmbeddingLem}\bp{\norm{\grad\vvel}_{\frac{4q}{4-q}} + \norm{\grad\vvel}_{\infty} } 
\leq \Cc{ss_StrongSolThmEmbeddingLem} \norm{\vvel}_{\xoseen{q,r}(\R^3)}.
\end{align*} 
Finally, the Sobolev embedding $\WSR{1}{r}(\grp)\embeds \LR{\infty}(\grp)$ for $r>4$, which follows from the classical Sobolev embedding
$\WSR{1}{r}\bp{\R^3\times(0,\per)}\embeds \LR{\infty}\bp{\R^3\times(0,\per)}$ since $\bijection$, by lifting, induces an embedding
$\WSR{1}{r}(\grp){\embeds}\WSR{1}{r}\bp{\R^3\times(0,\per)}$,
implies \eqref{ss_StrongSolThmEmbeddingLemwinfty}.
\end{proof}

\begin{lem}\label{lt_TPOseenUniqueness}
If $\uvel\in\TDR(\grp)$ with $\proj\uvel=0$ satisfies
\begin{align}\label{lt_TPOseenUniquenessEquation}
\partial_t\uvel -\Delta\uvel -\rey\partial_1\uvel = 0\quad \tin\grp,
\end{align} 
then $\uvel=0$.
\end{lem}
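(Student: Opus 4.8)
The plan is to pass to the dual group $\dualgrp=\R^3\times\Z$ via the Fourier transform $\FT_\grp$ and exploit the calculus relating differential operators on $\grp$ to polynomials on $\dualgrp$. Applying $\FT_\grp$ to equation \eqref{lt_TPOseenUniquenessEquation} and using the identity $\FT\bp{\partial_t^l\partial_x^\alpha\uvel}=i^{l+\snorm{\alpha}}(\perf)^l\,k^l\,\xi^\alpha\,\FT(\uvel)$, the equation becomes the algebraic relation
\begin{align*}
\Bp{i\perf k + \snorm{\xi}^2 - i\rey\xi_1}\,\ft{\uvel}(\xi,k) = 0 \quad\text{in }\TDR(\dualgrp).
\end{align*}
I would denote the symbol by $m(\xi,k):=i\perf k + \snorm{\xi}^2 - i\rey\xi_1$ and study where it vanishes. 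Splitting into real and imaginary parts, $\realpart m = \snorm{\xi}^2$ and the imaginary part is $\perf k - \rey\xi_1$. Thus $m(\xi,k)=0$ forces $\snorm{\xi}^2=0$, i.e. $\xi=0$, and then the imaginary part reduces to $\perf k=0$, i.e. $k=0$. Hence the symbol $m$ vanishes \emph{only} at the single point $(\xi,k)=(0,0)$ of $\dualgrp$.

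Next I would incorporate the hypothesis $\proj\uvel=0$. By Lemma \ref{lt_projsymbollem}, this means $\iFT_\grp\bb{\projsymbol\cdot\ft\uvel}=0$, so $\projsymbol\cdot\ft\uvel=0$, which says precisely that $\ft\uvel$ is supported in $\set{k\neq 0}$. Combined with the conclusion of the previous step — namely that $\ft\uvel$ is supported in the zero set of $m$, which is contained in $\set{(0,0)}\subset\set{k=0}$ — the two support conditions are disjoint. Therefore $\ft\uvel=0$ as an element of $\TDR(\dualgrp)$, and applying the (homeomorphism) $\iFT_\grp$ gives $\uvel=0$.

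The one point requiring care, and the main obstacle, is the passage from the distributional identity $m\cdot\ft\uvel=0$ to a genuine statement about the support of $\ft\uvel$, since $\ft\uvel$ is only a tempered distribution on $\dualgrp$. Because the $k$-variable is discrete, I would argue component-wise in $k$: for each fixed $k$, $\ft\uvel(\cdot,k)$ is a tempered distribution on $\R^3$ annihilated by multiplication with the smooth function $\xi\mapsto m(\xi,k)$, which is nonvanishing everywhere when $k\neq 0$ and vanishes only at $\xi=0$ when $k=0$. For $k\neq 0$, dividing by the nowhere-zero smooth symbol (whose reciprocal has at most polynomial growth, so multiplication preserves $\TDR$) yields $\ft\uvel(\cdot,k)=0$; but the support condition from $\proj\uvel=0$ already handles all $k\neq 0$ trivially, and for $k=0$ that same condition forces $\ft\uvel(\cdot,0)=0$ outright. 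Either way the distribution vanishes, completing the proof.
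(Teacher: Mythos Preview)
Your proof is correct and follows essentially the same route as the paper: apply $\FT_\grp$, observe that the symbol $i\perf k + \snorm{\xi}^2 - i\rey\xi_1$ vanishes only at $(0,0)$ so that $\supp\ft\uvel\subset\set{(0,0)}$, and then use $\proj\uvel=0$ (equivalently $\projsymbol\,\ft\uvel=0$) to exclude the slice $k=0$ and conclude $\ft\uvel=0$. One minor slip in wording: in your final paragraph you say the condition $\proj\uvel=0$ ``already handles all $k\neq 0$,'' but it is the division by the nonvanishing symbol that handles $k\neq 0$, while $\proj\uvel=0$ is precisely what kills the $k=0$ component---both cases are nevertheless treated correctly elsewhere in that paragraph, so the argument stands.
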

\begin{proof}
Applying $\FT_\grp$ on both sides in \eqref{lt_TPOseenUniquenessEquation}, we deduce that 
$\bp{i\perf k + \snorm{\xi}^2 -\rey i\xi_1} \uvelft = 0$.
Since the polynomial $\snorm{\xi}^2 + i\bp{\perf k -\rey \xi_1}$ vanishes only at $(\xi,k)=(0,0)$,
we conclude that $\supp\ft{\uvel}\subset\set{(0,0)}$. However, since 
$\proj\uvel=0$ we have $\projsymbol\ft{\uvel}=0$, whence $(\xi,0)\notin\supp\ft{\uvel}$ for all $\xi\in\R^3$. Consequently,
$\supp\ft{\uvel}=\emptyset$. It follows that $\ft{\uvel}=0$ and thus $\uvel=0$. 
\end{proof}

\begin{lem}\label{lt_OseenUniquenessLem}
Let $\vvel\in\LR{q}(\R^3)$ for some $q\in[1,\infty)$. If 
\begin{align}\label{lt_OseenUniquenessLemEquation}
-\Delta\vvel -\rey\partial_1\vvel = 0\quad \tin\R^3,
\end{align} 
then $\vvel=0$.
\end{lem}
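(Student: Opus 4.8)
The plan is to follow the same Fourier-analytic reasoning used in the proof of Lemma~\ref{lt_TPOseenUniqueness}, now carried out on $\R^3$ rather than on $\grp$. Since $\vvel\in\LR{q}(\R^3)$ with $q\in[1,\infty)$, it defines a tempered distribution, so I may apply the Euclidean Fourier transform to \eqref{lt_OseenUniquenessLemEquation}. Using $\FT(-\Delta\vvel)=\snorm{\xi}^2\ft{\vvel}$ and $\FT(\partial_1\vvel)=i\xi_1\ft{\vvel}$, the equation becomes $\bp{\snorm{\xi}^2-\rey i\xi_1}\ft{\vvel}=0$ as an identity in $\TDR(\R^3)$.

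Next I would analyse the symbol $p(\xi):=\snorm{\xi}^2-\rey i\xi_1$. Its zero set is determined by the simultaneous conditions $\snorm{\xi}^2=0$ and $\rey\xi_1=0$; since $\rey\neq0$, the only solution is $\xi=0$. On the open set $\R^3\setminus\set{0}$ the function $1/p$ is smooth, so multiplying the identity $p\,\ft{\vvel}=0$ by $1/p$ yields $\ft{\vvel}=0$ there. Hence $\supp\ft{\vvel}\subseteq\set{0}$. This is precisely the analogue of the support argument in Lemma~\ref{lt_TPOseenUniqueness}, simplified by the absence of the temporal frequency $k$.

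Finally, a tempered distribution supported at the single point $\set{0}$ is a finite linear combination of derivatives of the Dirac measure $\delta_0$, so $\vvel$ is (via the inverse transform) a polynomial. A nontrivial polynomial does not belong to $\LR{q}(\R^3)$ for any finite $q$; since $\vvel\in\LR{q}(\R^3)$ with $q<\infty$ by hypothesis, the polynomial must be trivial, and therefore $\vvel=0$. The only point requiring mild care is the passage from $\supp\ft{\vvel}\subseteq\set{0}$ to the polynomial structure, which is the standard structure theorem for distributions supported at a point; every other step is routine.
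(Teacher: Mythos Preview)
Your proposal is correct and follows essentially the same approach as the paper's own proof: apply the Fourier transform on $\R^3$, observe that the symbol $\snorm{\xi}^2-\rey i\xi_1$ vanishes only at the origin so that $\supp\ft{\vvel}\subset\set{0}$, conclude $\vvel$ is a polynomial, and then use the integrability hypothesis to force $\vvel=0$. Your write-up merely spells out a few details (the structure theorem for point-supported distributions, the reason a nonzero polynomial fails to lie in $\LR{q}$) that the paper leaves implicit; note also that the condition $\rey\neq0$ you invoke is not actually needed, since $\snorm{\xi}^2=0$ alone already forces $\xi=0$.
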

\begin{proof}
Applying the Fourier transform $\FT_{\R^3}$ in \eqref{lt_OseenUniquenessLemEquation}, we see that
$\bp{\snorm{\xi}^2 -\rey i\xi_1} \ft{\vvel} = 0$.
It follows that $\supp\ft{\vvel}\subset\set{0}$, whence $\vvel$ is a polynomial. Since $\vvel\in\LR{q}(\R^3)$, we must have
$\vvel=0$.
\end{proof}

\begin{lem}\label{lt_TPOseenMappingThmLem}
Let $q\in(1,\infty)$. Then 
\begin{align*}
&\ALTP:\WSRsigmacompl{2,1}{q}(\grp)\ra\LRsigmacompl{q}(\grp),\quad \ALTP\wvel:= \partial_t\wvel-\Delta{\wvel} -\rey\partial_1{\wvel}
\end{align*}
is a homeomorphism. Moreover $\norm{\ALTPinverse}\leq \Cc{C}\,\polynomial(\rey,\per)$,
where $\Cclast{C}=\Cclast{C}(q)$ and $\polynomial(\rey,\per)$ is a polynomial in $\rey$ and $\per$.
\end{lem}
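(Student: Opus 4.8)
The plan is to show that $\ALTP$ is bounded and injective, and then to construct its inverse explicitly as a Fourier-multiplier operator on $\grp$, whose $\LR{q}$-boundedness simultaneously yields surjectivity, continuity of the inverse, and the asserted polynomial bound. Boundedness of $\ALTP$ is immediate, since $\partial_t$, $\Delta$ and $\partial_1$ map the norm of $\WSRsigmacompl{2,1}{q}(\grp)$ into that of $\LR{q}(\grp)$; moreover, because $\Div$ commutes with these operators, $\ALTP\wvel$ is solenoidal whenever $\wvel$ is, and since $\projcompl$ likewise commutes with them, $\ALTP$ indeed maps into $\LRsigmacompl{q}(\grp)$. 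Injectivity follows at once from Lemma \ref{lt_TPOseenUniqueness}: every $\wvel$ in the domain satisfies $\proj\wvel=0$, so $\ALTP\wvel=0$ forces $\wvel=0$.

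For surjectivity I would fix $g\in\LRsigmacompl{q}(\grp)$ and pass to the Fourier side. Since $\proj g=0$, the transform $\ft{g}$ is supported in $\set{k\neq 0}$, and on this set the symbol $\snorm{\xi}^2+i\bp{\perf k-\rey\xi_1}$ of $\ALTP$ never vanishes, as it vanishes only at $(\xi,k)=(0,0)$. Accordingly I would define the candidate
\begin{align*}
\wvel:=\iFT_\grp\Bb{\frac{1}{\snorm{\xi}^2+i\bp{\perf k-\rey\xi_1}}\,\ft{g}}.
\end{align*}
Taking the scalar product with $\xi$ and using $\Div g=0$ shows that $\wvel$ is again solenoidal (so that no pressure gradient is needed), while the support condition gives $\proj\wvel=0$. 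It therefore only remains to show that $\wvel\in\WSRsigmacompl{2,1}{q}(\grp)$, with a corresponding norm estimate.

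This reduces to proving that the symbols
\begin{align*}
\frac{\xi^\alpha}{\snorm{\xi}^2+i\bp{\perf k-\rey\xi_1}}\quad(\snorm{\alpha}\leq 2)\qquad\text{and}\qquad\frac{i\perf k}{\snorm{\xi}^2+i\bp{\perf k-\rey\xi_1}}
\end{align*}
are $\LR{q}(\grp)$-multipliers, which is precisely the maximal-regularity statement for the time-periodic Oseen linearization established in \cite{mrtpns}; I would simply invoke it. Its proof rests on the transference principle, reducing an $\LR{q}$-multiplier on the dual group $\dualgrp=\R^3\times\Z$ to the corresponding multiplier on $\R^3\times\R$ after modifying the symbol near the zero-time-frequency hyperplane, which is harmless since $\proj g=0$ removes the contribution from $k=0$; the $\LR{q}$-boundedness of the Euclidean multiplier then follows from the Marcinkiewicz multiplier theorem.

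The main obstacle, and the only genuinely quantitative point, is tracking the dependence of the operator norm on $\rey$ and $\per$. Here a rescaling of the frequency variables $\xi$ and $k$ converts the Mikhlin constants of the symbols above into a polynomial expression, while the remaining dimensional constant depends only on $q$. This yields $\norm{\ALTPinverse}\leq\Cc{C}\,\polynomial(\rey,\per)$ with $\Cclast{C}=\Cclast{C}(q)$ and completes the argument.
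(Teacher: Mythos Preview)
Your proposal is correct and matches the paper's approach: the paper's proof of this lemma consists solely of the citation ``See \cite[Theorem 4.8]{mrtpns}'', and you likewise reduce the claim to that maximal-regularity result, additionally sketching the transference-plus-Marcinkiewicz argument behind it. Your outline of boundedness, injectivity via Lemma~\ref{lt_TPOseenUniqueness}, and the Fourier-multiplier construction of the inverse is exactly the content of the cited theorem.
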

\begin{proof}
See \cite[Theorem 4.8]{mrtpns}.
\end{proof}

\begin{lem}\label{lt_OseenMappingThmLem}
For $q\in (1,2)$ 
\begin{align*}
\ALOseen:\xoseen{q,r}(\R^3)\ra\LRsigma{q,r}(\R^3),\quad\ALOseen\vvel:=  -\Delta{\vvel} -\rey\partial_1{\vvel}
\end{align*}
a homeomorphism. Moreover 
$\norm{\ALOseeninverse}\leq\Cc[OseenAprioriConst]{C}$ 
with $\const{OseenAprioriConst}$ independent on $\rey$.
\end{lem}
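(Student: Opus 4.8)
The plan is to read off the homeomorphism property from the classical solution theory of the three-dimensional Oseen system, and to obtain the uniformity of $\norm{\ALOseeninverse}$ in $\rey$ by a scaling that reduces everything to the normalized operator at $\rey=1$. Boundedness of $\ALOseen\colon\xoseen{q,r}(\R^3)\ra\LRsigma{q,r}(\R^3)$ is the easy half: for $\vvel\in\xoseen{q,r}(\R^3)$ the two $\grad^2$-terms of the norm yield $\Delta\vvel\in\LR{q}(\R^3)\cap\LR{r}(\R^3)$, the weighted term gives $\partial_1\vvel\in\LR{q}(\R^3)$, and $\partial_1\vvel\in\LR{r}(\R^3)$ follows from the embedding properties collected in Lemma \ref{ss_StrongSolThmEmbeddingLem}; since moreover $\Div\ALOseen\vvel=0$, the image lies in $\LRsigma{q,r}(\R^3)$. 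Injectivity is immediate: $\vvel\in\xoseen{q,r}(\R^3)\subset\LR{2q/(2-q)}(\R^3)$ together with $\ALOseen\vvel=0$ forces $\vvel=0$ by Lemma \ref{lt_OseenUniquenessLem}. It therefore remains to establish surjectivity together with the a priori estimate $\oseennorm{\vvel}{q,r}\le c\,\bp{\norm{\ALOseen\vvel}_q+\norm{\ALOseen\vvel}_r}$ with $c$ independent of $\rey$; the bounded-inverse conclusion then follows.

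For the uniformity in $\rey$ I would use the dilation $\vvel(x)=\wvel(\rey x)$; after the reflection $x_1\mapsto-x_1$ we may assume $\rey>0$. A direct computation gives $\ALOseen\vvel(x)=\rey^2\,\bb{(-\Delta-\partial_1)\wvel}(\rey x)$, so $\ALOseen$ is conjugate to the single operator $-\Delta-\partial_1$. Tracking $\LR{p}$-norms through this dilation, each weighted summand of $\oseennorm{\vvel}{q}$ (the terms $\norm{\vvel}_{2q/(2-q)}$, $\norm{\grad\vvel}_{4q/(4-q)}$, $\norm{\partial_1\vvel}_q$, $\norm{\grad^2\vvel}_q$) and the norm $\norm{\ALOseen\vvel}_q$ all scale like $\rey^{2-3/q}$ --- this is precisely what fixes the powers $\tfrac12,\tfrac14,1$ of $\snorm{\rey}$ in the definition of the norm --- while $\norm{\grad^2\vvel}_r$ and $\norm{\ALOseen\vvel}_r$ both scale like $\rey^{2-3/r}$. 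Hence the desired $\rey$-independent estimate is equivalent to the corresponding estimate for $-\Delta-\partial_1$ at $\rey=1$.

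At $\rey=1$ the assertion is part of the classical $\LR{q}$-theory of the Oseen problem. For solenoidal $h\in\LR{q}(\R^3)$ with $q\in(1,2)$ one solves $(-\Delta-\partial_1)\wvel=h$ by convolution with the Oseen fundamental solution $\fundsoloseen$, and the mixed-norm bounds $\norm{\wvel}_{2q/(2-q)}$, $\norm{\grad\wvel}_{4q/(4-q)}$, $\norm{\partial_1\wvel}_q$, $\norm{\grad^2\wvel}_q\lesssim\norm{h}_q$ are read off from the summability and anisotropic decay of $\fundsoloseen$, as developed in \cite[Chapter VII]{galdi:book1}. The highest-order terms (the $\LR{q}$-bounds on $\partial_1\wvel$ and $\grad^2\wvel$) and the additional estimate $\norm{\grad^2\wvel}_r\lesssim\norm{h}_r$, $r\in(1,\infty)$, come from the $\LR{r}$-boundedness of the zero-order Fourier multipliers $\xi_i\xi_j/(\snorm{\xi}^2+i\xi_1)$ and $i\xi_1/(\snorm{\xi}^2+i\xi_1)$ (a Mikhlin--Lizorkin-type argument), so no interpolation is needed for the passage to $\LR{r}$. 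Uniqueness at $\rey=1$ is again Lemma \ref{lt_OseenUniquenessLem}.

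The main obstacle is exactly this fixed-$\rey$ a priori estimate in the anisotropic, scaling-critical norm $\oseennorm{\cdot}{q}$: the presence of the two distinct Lebesgue exponents $2q/(2-q)$ and $4q/(4-q)$ reflects the paraboloidal wake structure of $\fundsoloseen$, and it is the sharp summability of this kernel --- not any soft functional-analytic device --- that underlies the whole homeomorphism. Everything else (the scaling reduction, injectivity via Lemma \ref{lt_OseenUniquenessLem}, the multiplier upgrade to $\LR{r}$, and the passage from the a priori estimate to the bounded inverse) is routine by comparison.
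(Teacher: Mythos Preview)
The paper's own proof of this lemma is nothing more than the citation ``See \cite[Theorem VII.4.1]{galdi:book1}'', so there is no argument to compare against beyond what is contained in Galdi's monograph. Your proposal correctly unpacks precisely that content---the dilation $v(x)=w(\rey x)$ reducing the $\rey$-dependence to the fixed operator $-\Delta-\partial_1$ (which is exactly why the powers of $\snorm{\rey}$ appear in the definition of $\oseennorm{\cdot}{q}$), together with the classical fundamental-solution estimates and the Mikhlin-type multiplier bound for $\norm{\grad^2\cdot}_r$---so your approach is correct and is the same route the paper defers to; the only minor caveat is that your appeal to Lemma~\ref{ss_StrongSolThmEmbeddingLem} for $\partial_1 v\in\LR{r}$ uses the restricted range $q\le\tfrac{4}{3}$, $r>4$ of that lemma rather than the full $q\in(1,2)$ stated here, but this is the regime in which the result is actually applied.
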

\begin{proof}
See \cite[Theorem VII.4.1]{galdi:book1}.
\end{proof}

\begin{lem}\label{ss_MaxRegThmLem}
If $q\in(1,2)$, $r\in(4,\infty)$ and $\rey\neq 0$, 
then
\begin{align*}
\begin{aligned}
&\ALTP: \xoseen{q,r}(\R^3)\times\WSRsigmacompl{2,1}{q,r}(\grp)\ra\LRsigma{q,r}(\grp),\\
&\ALTP(\vvel,\wvel):= \partial_t\wvel-\Delta\bp{\vvel+\wvel} -\rey\partial_1\bp{\vvel+\wvel}
\end{aligned}
\end{align*}
is a homeomorphism. Moreover 
\begin{align}\label{ss_MaxRegThmOseenInvBound}
\norm{\ALTPinverse} \leq \Cc[lt_MaxRegThmOseenMaxRegConst]{C}\, \polynomial(\rey,\per),
\end{align}
where $\polynomial(\rey,\per)$ is a polynomial in $\rey$ and $\per$, and $\Cclast{C}=\Cclast{C}(q)$.
\end{lem}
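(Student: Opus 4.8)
The plan is to use the complementary projections $\proj$ and $\projcompl$ to reduce $\ALTP$ to a block-diagonal operator, the diagonal blocks being the steady Oseen operator $\ALOseen$ of Lemma \ref{lt_OseenMappingThmLem} and the purely oscillatory time-periodic Oseen operator $\ALTP$ of Lemma \ref{lt_TPOseenMappingThmLem}. Recall that $\proj$ commutes with the spatial derivatives, annihilates $\partial_t\wvel$ (being a time-average over the period), and fixes the time-independent field $\vvel$ while killing $\wvel$, whereas $\projcompl$ does the reverse. Hence applying $\proj$ to $\ALTP(\vvel,\wvel)=f$ yields $-\Delta\vvel-\rey\partial_1\vvel=\ALOseen\vvel=\proj f$, while applying $\projcompl$ yields $\partial_t\wvel-\Delta\wvel-\rey\partial_1\wvel=\ALTP\wvel=\projcompl f$. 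Under the splitting $\LRsigma{q,r}(\grp)=\LRsigma{q,r}(\R^3)\oplus\LRsigmacompl{q,r}(\grp)$ inherited from \eqref{lrsigmaDecomp}, the operator $\ALTP$ therefore acts diagonally, and its invertibility reduces to that of the two blocks.

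First I would check that $\ALTP$ maps into $\LRsigma{q,r}(\grp)$: by Lemma \ref{lt_OseenMappingThmLem} the steady block $\ALOseen\vvel$ lands in $\LRsigma{q,r}(\R^3)$, and by Lemma \ref{lt_TPOseenMappingThmLem}, applied with the exponent $q$ and with the exponent $r$ separately, the oscillatory block $\ALTP\wvel$ lands in $\LRsigmacompl{q}(\grp)\cap\LRsigmacompl{r}(\grp)=\LRsigmacompl{q,r}(\grp)$; the two targets are complementary subspaces, so the sum lies in $\LRsigma{q,r}(\grp)$. Bijectivity then follows block-wise. For injectivity, $\ALTP(\vvel,\wvel)=0$ gives $\ALOseen\vvel=0$ and $\ALTP\wvel=0$ after projecting, hence $\vvel=0$ and $\wvel=0$ by the two mapping lemmas. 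For surjectivity, given $f\in\LRsigma{q,r}(\grp)$ I would set $\vvel:=\ALOseeninverse(\proj f)$ and $\wvel:=\ALTPinverse(\projcompl f)$; since $\proj f\in\LRsigma{q,r}(\R^3)$ and $\projcompl f\in\LRsigmacompl{q,r}(\grp)$, both are well-defined and $\ALTP(\vvel,\wvel)=\proj f+\projcompl f=f$.

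The one step that is not purely formal, and which I regard as the main obstacle, is showing that the time-periodic inverse is compatible with the intersection of $\LR{q}$ and $\LR{r}$. Lemma \ref{lt_TPOseenMappingThmLem} produces an $\LR{q}$-solution $\wvel_q\in\WSRsigmacompl{2,1}{q}(\grp)$ and, separately, an $\LR{r}$-solution $\wvel_r\in\WSRsigmacompl{2,1}{r}(\grp)$ of $\ALTP\wvel=\projcompl f$, and one must argue that these coincide so that $\wvel:=\wvel_q=\wvel_r$ actually lies in $\WSRsigmacompl{2,1}{q,r}(\grp)$. Since both have vanishing time-average, their difference satisfies \eqref{lt_TPOseenUniquenessEquation} with vanishing $\proj$-part, and Lemma \ref{lt_TPOseenUniqueness} forces $\wvel_q=\wvel_r$. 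The analogous identification for the steady block is handled automatically, as Lemma \ref{lt_OseenMappingThmLem} already delivers a homeomorphism onto the intersection space $\LRsigma{q,r}(\R^3)$.

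Finally, the bound \eqref{ss_MaxRegThmOseenInvBound} comes from estimating each block and using that $\proj,\projcompl$ are bounded on $\LRsigma{q,r}(\grp)$ with constants independent of $\rey$ and $\per$ (they are averaging operators, so H\"older's inequality gives operator norm at most one). This yields $\norm{(\vvel,\wvel)}\leq\norm{\ALOseeninverse}\,\norm{\proj f}+\norm{\ALTPinverse}\,\norm{\projcompl f}$; inserting the $\rey$-independent bound on $\norm{\ALOseeninverse}$ from Lemma \ref{lt_OseenMappingThmLem} and the bound $\norm{\ALTPinverse}\leq C\,\polynomial(\rey,\per)$ from Lemma \ref{lt_TPOseenMappingThmLem}, and absorbing all constants into a single polynomial in $\rey$ and $\per$, produces \eqref{ss_MaxRegThmOseenInvBound}.
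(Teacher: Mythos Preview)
Your proposal is correct and follows precisely the same approach as the paper, which simply invokes the decomposition \eqref{lrsigmaDecomp} together with Lemmas~\ref{lt_TPOseenMappingThmLem} and~\ref{lt_OseenMappingThmLem}; you have merely spelled out the details (including the use of Lemma~\ref{lt_TPOseenUniqueness} to reconcile the $\LR{q}$- and $\LR{r}$-solutions) that the paper's one-line proof leaves implicit. One trivial slip: $\projcompl=\id-\proj$ has operator norm at most $2$, not $1$, but this is harmless for the argument.
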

\begin{proof}\newCCtr[c]{ss_MaxRegThm}
Recalling $\LRsigma{q,r}(\grp)=\LRsigma{q,r}(\R^3)\oplus\LRsigmacompl{q,r}(\grp)$ from \eqref{lrsigmaDecomp}, 
Lemma \ref{lt_TPOseenMappingThmLem} and Lemma \ref{lt_OseenMappingThmLem} concludes the proof.
\end{proof}

\begin{lem}\label{ss_PressureMappingLem}
Let $q\in(1,3)$ and $r\in(1,\infty)$. 
Then 
\begin{align*}
\begin{aligned}
\gradmap: \xpres{q,r}(\grp)\ra \gradspace{q,r}(\grp),\quad \gradmap\,\upres:=\grad\upres
\end{aligned}
\end{align*}
is a homeomorphism.
\end{lem}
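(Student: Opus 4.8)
The plan is to show that $\gradmap$ is a bounded linear bijection with bounded inverse, treating well-definedness, boundedness, injectivity, and surjectivity with a bounded inverse in turn. \emph{Boundedness and well-definedness} are essentially built into the definition of $\norm{\cdot}_{\xpres{q,r}}$: for $\upres\in\xpres{q,r}(\grp)$ the last two terms of that norm already give $\grad_x\upres\in\LR{q}(\grp)^3\cap\LR{r}(\grp)^3$, while in Fourier variables $\FT_\grp(\grad\upres)=i\xi\,\upresft$, so the Helmholtz multiplier $\idmatrix-\frac{\xi\otimes\xi}{\snorm{\xi}^2}$ from Lemma \ref{lt_HelmholtzProjDefLem} annihilates it. Hence $\hproj\grad\upres=0$ and $\grad\upres\in\gradspace{q,r}(\grp)$, with $\norm{\grad\upres}_{\gradspace{q,r}}=\norm{\grad\upres}_q+\norm{\grad\upres}_r\leq C\norm{\upres}_{\xpres{q,r}}$ directly from the norm.

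For \emph{injectivity}, if $\grad\upres=0$ in $\TDR(\grp)$ then $\upres$ is independent of $x$, so $\upres(\cdot,t)$ is constant in $x$ for a.e.\ $t$; since the first term of $\norm{\cdot}_{\xpres{q,r}}$ forces $\upres(\cdot,t)\in\LR{\frac{3q}{3-q}}(\R^3)$, that constant must vanish, whence $\upres=0$.

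\emph{Surjectivity with a bounded inverse} is the heart of the matter. Given $g\in\gradspace{q,r}(\grp)$, that is $g\in\LR{q}(\grp)^3\cap\LR{r}(\grp)^3$ with $\hproj g=0$, I would define the candidate potential by the multiplier $\upresft:=-i\,\frac{\xi\cdot\ft{g}}{\snorm{\xi}^2}$, equivalently $\upres:=\iFT_\grp\bb{-i\,\frac{\xi\cdot\ft{g}}{\snorm{\xi}^2}}$. Since $\hproj g=0$ reads $\ft{g}=\frac{\xi(\xi\cdot\ft{g})}{\snorm{\xi}^2}$ in Fourier space, one computes $i\xi\,\upresft=\ft{g}$, so $\grad\upres=g$ and $\gradmap$ is onto. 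Because the Helmholtz multiplier involves only the spatial frequency $\xi$, the operator $\hproj$ acts as a convolution in $x$ alone, so $\hproj g=0$ means $g(\cdot,t)$ is a spatial gradient field for a.e.\ $t$, and the construction above coincides slice-wise with the classical reconstruction of a scalar potential from a gradient field on $\R^3$. The required $\xpres{q,r}$-bound then splits along the three terms of the norm: the two gradient terms equal $\norm{g(\cdot,t)}_q$ and $\norm{g}_r$ (up to the Haar-measure normalization), while for the first term I would apply the Sobolev embedding $\DSR{1}{q}(\R^3)\embeds\LR{\frac{3q}{3-q}}(\R^3)$ (valid precisely for $q\in(1,3)$; see \cite{galdi:book1}) slice-wise to get $\norm{\upres(\cdot,t)}_{\frac{3q}{3-q}}\leq C\norm{\grad_x\upres(\cdot,t)}_q=C\norm{g(\cdot,t)}_q$, and integrate in $t$. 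Summing yields $\norm{\upres}_{\xpres{q,r}}\leq C\,\norm{g}_{\gradspace{q,r}}$, the bounded inverse.

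I expect the main obstacle to lie in the surjectivity step: one must verify that the multiplier $\frac{\xi_j}{\snorm{\xi}^2}$ indeed returns a locally integrable function on $\grp$ whose spatial gradient is $g$, and --- most delicately --- that its time slices $\upres(\cdot,t)$ are the correct representatives in the homogeneous space $\DSR{1}{q}(\R^3)$ so that the Sobolev inequality applies and $t\mapsto\upres(\cdot,t)$ is measurable with the claimed integrability. The cleanest route is to carry out the reconstruction slice-wise in time through the established $\R^3$-theory (Helmholtz decomposition together with the Sobolev estimate for $\DSR{1}{q}(\R^3)$) and then confirm that the slice-wise potential assembles into the $\grp$-multiplier expression above.
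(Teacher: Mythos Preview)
The paper does not actually prove this lemma; it simply cites \cite[Lemma 5.4]{mrtpns}. Your outline is the natural direct argument and is essentially what one expects to find behind that citation: well-definedness and injectivity are immediate from the definition of the $\xpres{q,r}$-norm, and for surjectivity one inverts the gradient slice-wise via the symbol $-i\xi/\snorm{\xi}^2$, obtaining the $\LR{3q/(3-q)}$-control on $\upres(\cdot,t)$ from the Hardy--Littlewood--Sobolev inequality (equivalently, the homogeneous Sobolev embedding $\DSR{1}{q}(\R^3)\hookrightarrow\LR{3q/(3-q)}(\R^3)$ for $q\in(1,3)$). The worry you flag about the multiplier $\xi_j/\snorm{\xi}^2$ producing a legitimate $\LRloc{1}(\grp)$ function is resolved exactly by that boundedness: writing $\xi_j/\snorm{\xi}^2=(\xi_j/\snorm{\xi})\cdot\snorm{\xi}^{-1}$ exhibits the operator as a Riesz transform composed with the Riesz potential $I_1$, bounded $\LR{q}(\R^3)\to\LR{3q/(3-q)}(\R^3)$, so the slice-wise reconstruction automatically yields the decaying representative and integrates in $t$ to give $\upres\in\xpres{q,r}(\grp)$ with the required bound.
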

\begin{proof}
See \cite[Lemma 5.4]{mrtpns}.
\end{proof}

\begin{proof}[Proof of Theorem \ref{ss_StrongSolThm}]\newCCtr[c]{ss_StrongSolThm}
We can use the Helmholtz projection
to eliminate the pressure term $\grad\upres$ in \eqref{ss_nsongrp}.
More precisely, we shall first study
\begin{align}\label{ss_nssol}
\begin{pdeq}
&\partial_t\uvel -\Delta\uvel -\rey\partial_1\uvel + \hproj\bb{\nsnonlin{\uvel}}= \hproj f && \tin\grp,\\
&\Div\uvel =0 && \tin\grp.
\end{pdeq}
\end{align} 
After solving \eqref{ss_nssol}, a pressure term $\upres$ can be constructed such that $(\uvel,\upres)$ solves \eqref{ss_nsongrp}.

We first show that any pair of vector fields 
$(\vvel,\wvel)\in\xoseen{q,r}(\R^3)\times\WSRsigmacompl{2,1}{q,r}(\grp)$ satisfies 
$\nsnonlin{(\vvel+\wvel)}\in\LR{q}(\grp)^3\cap\LR{r}(\grp)^3$. 
Recalling \eqref{ss_StrongSolThmEmbeddingLemGradvr} and \eqref{ss_StrongSolThmEmbeddingLemvinfty}, we find that 
\begin{align}\label{ss_StrongSolThmNonlintermEst1}
\norm{\nsnonlin{\vvel}}_r \leq \norm{\vvel}_\infty\,\norm{\grad\vvel}_r \leq 
\Cc{ss_StrongSolThm} \norm{\vvel}_{\xoseen{q,r}(\R^3)}^2.
\end{align}
Moreover, employing H\"older's inequality and recalling \eqref{ss_StrongSolThmEmbeddingLemGradv2} we deduce 
\begin{align}\label{xxxUNUSEDLABELxxxss_StrongSolThmNonlintermEst2}
\norm{\nsnonlin{\vvel}}_q \leq \norm{\vvel}_{\frac{2q}{2-q}}\,\norm{\grad\vvel}_2 \leq 
\Cc{ss_StrongSolThm} \norm{\vvel}_{\xoseen{q,r}(\R^3)}^2.
\end{align}
We also observe that
\begin{align}\label{xxxUNUSEDLABELxxxss_StrongSolThmNonlintermEst3}
\norm{\nsnonlinb{\vvel}{\wvel}}_r \leq \norm{\vvel}_\infty\,\norm{\grad\wvel}_r \leq \Cc{ss_StrongSolThm} 
\norm{\vvel}_{\xoseen{q,r}(\R^3)}\,\norm{\wvel}_{2,1q,r}
\end{align}
and 
\begin{align}\label{xxxUNUSEDLABELxxxss_StrongSolThmNonlintermEst4}
\norm{\nsnonlinb{\vvel}{\wvel}}_q \leq \norm{\vvel}_\infty\,\norm{\grad\wvel}_q \leq \Cc{ss_StrongSolThm} 
\norm{\vvel}_{\xoseen{q,r}(\R^3)}\,\norm{\wvel}_{2,1q,r}.
\end{align}
Similarly, recalling \eqref{ss_StrongSolThmEmbeddingLemGradvinfty} we can estimate
\begin{align}\label{xxxUNUSEDLABELxxxss_StrongSolThmNonlintermEst5}
\norm{\nsnonlinb{\wvel}{\vvel}}_r \leq \norm{\wvel}_r\,\norm{\grad\vvel}_\infty \leq \Cc{ss_StrongSolThm} 
\norm{\wvel}_{2,1q,r}\,\norm{\vvel}_{\xoseen{q,r}(\R^3)}
\end{align}
and 
\begin{align}\label{xxxUNUSEDLABELxxxss_StrongSolThmNonlintermEst6}
\norm{\nsnonlinb{\wvel}{\vvel}}_q \leq \norm{\wvel}_q\,\norm{\grad\vvel}_\infty \leq \Cc{ss_StrongSolThm} 
\norm{\wvel}_{2,1q,r}\,\norm{\vvel}_{\xoseen{q,r}(\R^3)}.
\end{align}
By \eqref{ss_StrongSolThmEmbeddingLemwinfty} it follows that also 
\begin{align}\label{xxxUNUSEDLABELxxxss_StrongSolThmNonlintermEst7}
\norm{\nsnonlinb{\wvel}{\wvel}}_r \leq \norm{\wvel}_\infty\,\norm{\grad\wvel}_r \leq \Cc{ss_StrongSolThm} 
\norm{\wvel}_{2,1q,r}^2
\end{align}
and 
\begin{align}\label{ss_StrongSolThmNonlintermEst8}
\norm{\nsnonlinb{\wvel}{\vvel}}_q \leq \norm{\wvel}_q\,\norm{\grad\vvel}_\infty 
\leq \Cc{ss_StrongSolThm} 
\norm{\wvel}_{2,1,q,r}\,\norm{\vvel}_{\xoseen{q,r}(\R^3)}.
\end{align}
Combining \eqref{ss_StrongSolThmNonlintermEst1}--\eqref{ss_StrongSolThmNonlintermEst8}, we conclude $\nsnonlin{(\vvel+\wvel)}\in\LR{q}(\grp)^3\cap\LR{r}(\grp)^3$ and
\begin{align}\label{ss_StrongSolThmNonlintermEstFinal}
\norm{\hproj\bb{\nsnonlin{(\vvel+\wvel)}}}_{\LRsigma{q,r}(\grp)}\leq \Cc[ss_StrongSolThmNonlintermEstFinalConst]{ss_StrongSolThm} 
\norm{(\vvel,\wvel)}_{\xoseen{q,r}(\R^3)\times\WSRsigmacompl{2,1}{q,r}(\grp)}^2.
\end{align}
Recalling Lemma \ref{ss_MaxRegThmLem}, we can now define the map
\begin{align*}
\begin{aligned}
&\fpmap:\xoseen{q,r}(\R^3)\times\WSRsigmacompl{2,1}{q,r}(\grp)\ra\xoseen{q,r}(\R^3)\times\WSRsigmacompl{2,1}{q,r}(\grp),\\
&\fpmap(\vvel,\wvel):=\ALTPinverse\bp{\hproj f-\hproj\bb{\nsnonlin{(\vvel+\wvel)}}}.
\end{aligned}
\end{align*}
By construction, a fixed point $(\vvel,\wvel)$ of $\fpmap$ yields a solution $\uvel:=\vvel+\wvel$ to \eqref{ss_nssol}.  
As $\xoseen{q,r}(\R^3)$ and $\WSRsigmacompl{2,1}{q,r}(\grp)$ are Banach spaces, we shall employ Banach's fixed point theorem to show existence of such a fixed point. To this end, we recall \eqref{ss_MaxRegThmOseenInvBound} and estimate
\begin{align}
\begin{aligned}\label{ss_StrongSolThmEstOfL}
\norm{\fpmap(\vvel,\wvel)} &\leq \const{lt_MaxRegThmOseenMaxRegConst} \polynomial(\rey,\per)\,\bp{
\norm{\hproj f}_{\LRsigma{q,r}(\grp)} + \norm{\hproj\bb{\nsnonlin{(\vvel+\wvel)}}}_{\LRsigma{q,r}(\grp)} } \\
&\leq \Cc[ss_StrongSolThmFixedpointconst1]{ss_StrongSolThm}\polynomial(\rey,\per)\,\bp{ \const{ss_StrongSolThmEps} + 
\norm{(\vvel,\wvel)}_{\xoseen{q,r}(\R^3)\times\WSRsigmacompl{2,1}{q,r}(\grp)}^2\, }. 
\end{aligned}
\end{align}
Consequently, $\fpmap$ is a self-mapping on the ball $\overline{B_\rho}\subset\xoseen{q,r}(\R^3)\times\WSRsigmacompl{2,1}{q,r}(\grp)$ provided 
$\rho$ and $\const{ss_StrongSolThmEps}$ satisfy
\begin{align*}
\const{ss_StrongSolThmFixedpointconst1}\polynomial(\rey,\per)\bp{\const{ss_StrongSolThmEps} + \rho^2} \leq \rho.
\end{align*}
The above inequality is satisfied if we, for example, choose
\begin{align}\label{ss_StrongSolThmParameterChoice}
\rho:=\frac{1}{4\const{ss_StrongSolThmFixedpointconst1}\polynomial(\rey,\per)},\quad 
\const{ss_StrongSolThmEps}:= \frac{1}{16\const{ss_StrongSolThmFixedpointconst1}^2\polynomial(\rey,\per)^2}.
\end{align}
With this choice of parameters, we further have for $(\vvel_1,\wvel_1),(\vvel_2,\wvel_2)\in\overline{B_\rho}$:
\begin{align*}
\norm{\fpmap(\vvel_1,\wvel_1)-\fpmap(\vvel_2,\wvel_2)} &\leq 
\const{ss_StrongSolThmFixedpointconst1}\polynomial(\rey,\per)\, \norm{(\vvel_1,\wvel_1)-(\vvel_2,\wvel_2)}^2\\
&\leq \const{ss_StrongSolThmFixedpointconst1}\polynomial(\rey,\per)\,2\rho\, \norm{(\vvel_1,\wvel_1)-(\vvel_2,\wvel_2)} \\
&\leq \half\norm{(\vvel_1,\wvel_1)-(\vvel_2,\wvel_2)}. 
\end{align*}
Thus, $\fpmap$ becomes a contractive self-mapping. By 
Banach's fixed point theorem, $\fpmap$ then has a unique fixed point in $\overline{B_\rho}$.  

Finally, we construct the pressure. By \eqref{ss_StrongSolThmNonlintermEstFinal}, $\nsnonlin{\uvel}\in\LR{q}(\grp)^3\cap\LR{r}(\grp)^3$. Recalling
Lemma \ref{ss_PressureMappingLem}, the function 
\begin{align*}
\upres:=\gradmap^{-1}\bp{\bb{\id-\hproj}\bp{f-\nsnonlin{\uvel}}}
\end{align*}
belongs to $\xpres{q,r}(\grp)$. Clearly, $(\uvel,\upres)$ is a solution to \eqref{ss_nsongrp}. 
\end{proof}

\begin{lem}\label{ss_AddRegOfWeakSolLem}
Let $\rey\neq 0$ and $\weakuvel\in\LRloc{1}(\grp)^3$ 
be a generalized solution to \eqref{ss_nsongrp}, that is, it satisfies 
\eqref{UniquenessClassDefDefofweaksol} for all $\Phi\in\CRcisigma(\grp)$, with
$\weakuvel\in\LR{2}\bp{(0,\per);\DSRNsigma{1}{2}(\R^3)}$ and $\projcompl\weakuvel\in\LR{\infty}\bp{(0,\per);\LR{2}(\R^3)^3}$. 
If for some $q\in(1,\frac{5}{4}\big]$
\begin{align}\label{ss_AddRegOfWeakSolLemDatacond1}
f\in\LR{q}(\grp)^3\cap\LR{\frac{3}{2}}(\grp)^3
\end{align} 
then
\begin{align}
&\projcompl\weakuvel\in\WSRsigmacompl{2,1}{q}(\grp).\label{ss_AddRegOfWeakSolLemAddRegProjcompl}
\end{align}
If for some $\tq\in(1,\frac{3}{2}\big]$
\begin{align}\label{ss_AddRegOfWeakSolLemDatacond2}
f\in\LR{\tq}(\grp)^3\cap\LR{\frac{3}{2}}(\grp)^3
\end{align} 
then
\begin{align}
&\proj\weakuvel\in\xoseen{\tq}(\R^3).\label{ss_AddRegOfWeakSolLemAddRegProj}
\end{align}
\end{lem}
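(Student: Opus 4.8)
The plan is to decouple the system with the temporal projections $\proj,\projcompl$ and the Helmholtz projection $\hproj$, and then upgrade the regularity of each part by inverting the linear operators of Lemma \ref{lt_OseenMappingThmLem} and Lemma \ref{lt_TPOseenMappingThmLem}. Testing \eqref{UniquenessClassDefDefofweaksol} against solenoidal fields is equivalent to the identity $\hproj\bb{\partial_t\weakuvel-\Delta\weakuvel-\rey\partial_1\weakuvel+\nsnonlin{\weakuvel}-f}=0$ in $\TDR(\grp)$, and since $\proj,\projcompl$ commute with $\hproj$ and with the spatial operators and $\partial_t$, applying them yields the steady Oseen system
\begin{align*}
\ALOseen\proj\weakuvel = \hproj\proj\bb{f-\nsnonlin{\weakuvel}}\quad\tin\R^3
\end{align*}
and the mean-free time-periodic Oseen system
\begin{align*}
\ALTP\projcompl\weakuvel = \hproj\projcompl\bb{f-\nsnonlin{\weakuvel}}\quad\tin\grp.
\end{align*}
The decisive structural observation to record here is the cancellation obtained by writing $\weakuvel=\proj\weakuvel+\projcompl\weakuvel$ and expanding $\nsnonlin{\weakuvel}$ into four bilinear terms: the pure steady term $\nsnonlinb{\proj\weakuvel}{\proj\weakuvel}$ is annihilated by $\projcompl$, while the two mixed terms have vanishing time-average and are annihilated by $\proj$. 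Hence the steady forcing reduces to $\nsnonlinb{\proj\weakuvel}{\proj\weakuvel}$ and $\proj\bb{\nsnonlinb{\projcompl\weakuvel}{\projcompl\weakuvel}}$, and the oscillatory forcing to the three terms carrying at least one factor $\projcompl\weakuvel$.

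Next I would extract base integrability from the hypotheses. From $\weakuvel\in\LR{2}\bp{(0,\per);\DSRNsigma{1}{2}(\R^3)}$ and the embedding $\DSRNsigma{1}{2}(\R^3)\embeds\LR{6}(\R^3)$ I get $\grad\weakuvel\in\LR{2}(\grp)$ and $\weakuvel\in\LR{2}\bp{(0,\per);\LR{6}(\R^3)}$; averaging in time gives $\proj\weakuvel\in\LR{6}(\R^3)$ with $\grad\proj\weakuvel\in\LR{2}(\R^3)$. Combining $\projcompl\weakuvel\in\LR{\infty}\bp{(0,\per);\LR{2}(\R^3)^3}$ with $\projcompl\weakuvel\in\LR{2}\bp{(0,\per);\LR{6}(\R^3)}$ and interpolating yields $\projcompl\weakuvel\in\LR{10/3}(\grp)$. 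Feeding these bounds into the surviving bilinear terms, Hölder's inequality controls $\nsnonlinb{\proj\weakuvel}{\proj\weakuvel}$ and $\proj\bb{\nsnonlinb{\projcompl\weakuvel}{\projcompl\weakuvel}}$ in $\LR{3/2}(\R^3)$, the mixed terms in $\LR{3/2}(\grp)$, and $\nsnonlinb{\projcompl\weakuvel}{\projcompl\weakuvel}$ in $\LR{5/4}(\grp)$; this is precisely where the anchoring hypothesis $f\in\LR{3/2}$ enters.

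With these estimates I obtain a first regularity gain. Since $\hproj\proj\bb{f-\nsnonlin{\weakuvel}}\in\LRsigma{3/2}(\R^3)$ and $3/2<2$, Lemma \ref{lt_OseenMappingThmLem} (in its single-exponent form) produces a field in $\xoseen{3/2}(\R^3)$; as this field and $\proj\weakuvel$ solve the same Oseen equation and their difference lies in $\LR{6}(\R^3)$, Lemma \ref{lt_OseenUniquenessLem} identifies them, so $\proj\weakuvel\in\xoseen{3/2}(\R^3)$. The oscillatory forcing lies a priori only in the sum $\LR{3/2}(\grp)+\LR{5/4}(\grp)$, so I would invert Lemma \ref{lt_TPOseenMappingThmLem} on each summand and invoke the uniqueness Lemma \ref{lt_TPOseenUniqueness} (both $\projcompl\weakuvel$ and the constructed field are mean-free tempered distributions solving the same system) to place $\projcompl\weakuvel$ in $\WSRsigmacompl{2,1}{3/2}(\grp)+\WSRsigmacompl{2,1}{5/4}(\grp)$. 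Either way, the spatial and temporal integrability of both $\proj\weakuvel$ and $\projcompl\weakuvel$ now strictly exceeds the base level.

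Finally I would bootstrap. Reinserting the improved integrability of $\proj\weakuvel$ (whose integrability gains push down toward $\LR{2}(\R^3)$ as the Oseen index approaches $1$) and of $\grad\projcompl\weakuvel$ into the Hölder estimates lowers the attainable exponent of each bilinear term, which through the same inversion-plus-uniqueness argument lowers the integrability index of $\proj\weakuvel$ and $\projcompl\weakuvel$ in turn. Alternating the steady and oscillatory improvements, the coupled exponent recursion decreases toward $1$ and therefore reaches the prescribed $\tq\in(1,\frac{3}{2}\big]$ and $q\in(1,\frac{5}{4}\big]$ after finitely many rounds, giving \eqref{ss_AddRegOfWeakSolLemAddRegProj} and \eqref{ss_AddRegOfWeakSolLemAddRegProjcompl}. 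The main obstacle is exactly closing this recursion: one must verify that at every stage \emph{both} coupled indices genuinely improve rather than stalling on a near-critical gap (for instance the oscillatory mixed term requires $\proj\weakuvel\in\LR{10/3}(\R^3)$ while the preceding step only supplies $\LR{24/7}(\R^3)$, which is remedied only once the Oseen index has dropped to $5/4$), and one must carry the solutions through sum-spaces of $\LR{p}(\grp)$ and pin them down via the uniqueness lemmas, since on $\R^3$ higher integrability cannot be traded for lower without the decay furnished by the Oseen estimates.
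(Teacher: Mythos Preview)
Your decomposition, cancellation observations, and base integrability estimates are all correct and match the paper. The gap is in the bootstrap for the steady part $\proj\weakuvel$. From the base hypothesis $\proj\weakuvel\in\DSRNsigma{1}{2}(\R^3)$ you get $\proj\weakuvel\in\LR{6}(\R^3)$ and $\grad\proj\weakuvel\in\LR{2}(\R^3)$, hence $\nsnonlinb{\proj\weakuvel}{\proj\weakuvel}\in\LR{3/2}(\R^3)$, and Lemma~\ref{lt_OseenMappingThmLem} at index $3/2$ returns precisely $\proj\weakuvel\in\xoseen{3/2}(\R^3)$, that is, $\proj\weakuvel\in\LR{6}$ and $\grad\proj\weakuvel\in\LR{12/5}$. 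No integrability of $\proj\weakuvel$ below $\LR{6}$ or of $\grad\proj\weakuvel$ below $\LR{2}$ is gained, so the term $\nsnonlinb{\proj\weakuvel}{\proj\weakuvel}$ stays stuck at $\LR{3/2}$ and the recursion does not move. This is exactly the critical scaling of the three-dimensional steady Navier--Stokes problem, and it cannot be broken by linear inversion plus H\"older alone; your parenthetical remark about the Oseen index ``having dropped to $5/4$'' presupposes what you are trying to prove.

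The paper does not bootstrap here. It treats the equation for $\proj\weakuvel$ as the \emph{nonlinear} steady Navier--Stokes system with Oseen drift and external force $\hproj\proj f-\hproj\proj\bb{\nsnonlin{\projcompl\weakuvel}}\in\LR{q}(\R^3)\cap\LR{3/2}(\R^3)$, and invokes \cite[Lemma~X.6.1]{GaldiBookNew}, a genuine regularity result for weak solutions of that nonlinear problem, to obtain $\proj\weakuvel\in\xoseen{q}(\R^3)\cap\xoseen{3/2}(\R^3)$ in one stroke. With this in hand the mixed terms $\nsnonlinb{\projcompl\weakuvel}{\proj\weakuvel}$ and $\nsnonlinb{\proj\weakuvel}{\projcompl\weakuvel}$ land directly in $\LR{q}(\grp)$ (using also $\nsnonlin{\projcompl\weakuvel}\in\LR{1}(\grp)\cap\LR{5/4}(\grp)$ and interpolation), and a single application of Lemma~\ref{lt_TPOseenMappingThmLem} plus Lemma~\ref{lt_TPOseenUniqueness} gives \eqref{ss_AddRegOfWeakSolLemAddRegProjcompl}. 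So the missing ingredient in your argument is precisely this nonlinear steady regularity lemma; once you import it, no iteration is needed.
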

\begin{proof}\newCCtr[c]{ss_AddRegOfWeakSolLem}
We first assume \eqref{ss_AddRegOfWeakSolLemDatacond1} for some $q\in(1,\frac{5}{4}\big]$.  
Put $\weakvvel:=\proj\weakuvel$ and $\weakwvel:=\projcompl\weakuvel$. 
By assumption $\weakwvel\in\LR{2}(\grp)^3$ and $\grad\weakwvel\in\LR{2}(\grp)^{3\times 3}$, whence
\begin{align}\label{ss_AddRegOfWeakSolLemSummabilitywgradwlow}
\nsnonlin{\weakwvel}\in\LR{1}(\grp)^3.
\end{align}
Employing first H\"older's inequality and then a Sobolev-type inequality, see for example \cite[Lemma II.2.2]{galdi:book1} invoked with $n=3$, $q=2$ and $r=\frac{10}{3}$, we estimate
\begin{align}\label{ss_AddRegOfWeakSolLemSummabilitywgradwhighpre}
\begin{aligned}
\iper\int_0^\per\int_{\R^3}\snorm{\nsnonlin{\weakwvel}}^{\frac{5}{4}}\,\dx\dt &\leq 
\iper\int_0^\per \norm{\grad\weakwvel(\cdot,t)}_2^{\frac{5}{4}}\, \norm{\weakwvel(\cdot,t)}_{\frac{10}{3}}^{\frac{5}{4}}\,\dt\\
&\leq \Cc{ss_AddRegOfWeakSolLem}\,\iper\int_0^\per \norm{\grad\weakwvel(\cdot,t)}_2^{\frac{5}{4}}\, 
\Bp{\norm{\grad\weakwvel(\cdot,t)}_2^{\frac{3}{5}}\, \norm{\weakwvel(\cdot,t)}_{2}^{\frac{2}{5}}}^{\frac{5}{4}}\,\dt\\
&\leq\Cc{ss_AddRegOfWeakSolLem}\, \bp{\esssup_{t\in(0,\per)}\norm{\weakwvel(\cdot,t)}_2}^{\frac{1}{2}}\cdot 
\iper\int_0^\per \norm{\grad\weakwvel(\cdot,t)}_2^{2}\,\dx <\infty,
\end{aligned}
\end{align}
whence
\begin{align}\label{ss_AddRegOfWeakSolLemSummabilitywgradwhigh}
\nsnonlin{\weakwvel}\in\LR{\frac{5}{4}}(\grp)^3.
\end{align}
We further deduce, by employing first Minkowski's integral inequality, then H\"older's inequality, and finally the Sobolev embedding 
$\DSRNsigma{1}{2}(\R^3)\embeds\LR{6}(\R^3)$, that 
\begin{align*}
\begin{aligned}
\Bp{\int_{\R^3}\bigg|\iper\int_0^\per \nsnonlin{\weakwvel}\,\dt\bigg|^{\frac{3}{2}}\,\dx}^{\frac{3}{2}}&\leq 
\iper\int_0^\per\Bp{\int_{\R^3} \snorm{\nsnonlin{\weakwvel}}^{\frac{3}{2}}\,\dx}^{\frac{3}{2}}\,\dt \\
&\leq \iper\int_0^\per 
\Bp{\int_{\R^3} \snorm{\weakwvel}^{6}\,\dx}^{\frac{1}{6}}
\Bp{\int_{\R^3} \snorm{\grad\weakwvel}^{2}\,\dx}^{\frac{1}{2}}
\,\dt\\
&\leq \iper\int_0^\per \int_{\R^3} \snorm{\grad\weakwvel}^{2}\,\dx\dt<\infty.
\end{aligned}
\end{align*}
Consequently, we have 
\begin{align}\label{ss_AddRegOfWeakSolLemSummabilityprojwgradwhigh}
\proj\bb{\nsnonlin{\weakwvel}}\in\LR{\frac{3}{2}}(\grp)^3.
\end{align} 
Recalling Lemma \ref{lt_projprojcomplinnerprodl1locfunctions}, it is easy to verify from the weak formulation \eqref{UniquenessClassDefDefofweaksol} that  
\begin{align*}
\begin{aligned}
&\int_{\R^3} -\weakvvel\cdot\partial_t\Phi +\grad\weakvvel:\grad\Phi -\rey\partial_1\weakvvel\cdot\Phi
+ \Bp{\nsnonlin{\weakvvel} + \proj\bb{\nsnonlin{\weakwvel}}}\cdot\Phi\,\dx = \int_{\R^3} \proj f\cdot\Phi\,\dx
\end{aligned}
\end{align*}  
for all $\Phi\in\CRcisigma(\R^3)$. This means that $\weakvvel\in\DSRNsigma{1}{2}(\R^3)$ is a generalized solution to the steady-state problem
\begin{align}\label{ss_AddRegOfWeakSolLemEqForvvel}
\begin{pdeq}
&-\Delta\weakvvel -\rey\partial_1\weakvvel + \hproj\bb{\nsnonlin{\weakvvel}}  = \hproj\proj f - \hproj\proj\bb{\nsnonlin{\weakwvel}}  && \tin\R^3,\\
&\Div\weakvvel =0 && \tin\R^3.
\end{pdeq}
\end{align} 
From \eqref{ss_AddRegOfWeakSolLemSummabilitywgradwlow}, \eqref{ss_AddRegOfWeakSolLemSummabilityprojwgradwhigh}, and 
assumption \eqref{ss_AddRegOfWeakSolLemDatacond1}, we deduce
the summability
\begin{align*}
\proj f - \proj\bb{\nsnonlin{\weakwvel}} \in\LR{q}(\R^3)^3\cap\LR{\frac{3}{2}}(\R^3)^3
\end{align*}
for the right-hand side in \eqref{ss_AddRegOfWeakSolLemEqForvvel}. 
Known results for the steady-state Navier-Stokes problem \eqref{ss_AddRegOfWeakSolLemEqForvvel} then imply
\begin{align}\label{ss_AddRegOfWeakSolLemSummabilityvvel}
\weakvvel\in\xoseen{q}(\R^3)\cap\xoseen{\frac{3}{2}}(\R^3). 
\end{align}
More specifically, we can employ \cite[Lemma X.6.1]{GaldiBookNew}\footnote{Lemma X.6.1 is new in the latest edition of the 
monograph \cite{GaldiBookNew}.} which, although formulated for a three-dimensional exterior domain, also holds for solutions to the
whole-space problem \eqref{ss_AddRegOfWeakSolLemEqForvvel}. By the additional regularity for $\weakvvel$ implied by \eqref{ss_AddRegOfWeakSolLemSummabilityvvel}, it follows that $\grad\weakvvel\in\LR{2}(\R^3)^{3\times 3}$. 
Since by assumption $\weakwvel\in\LR{2}(\grp)^3$, we thus have $\nsnonlinb{\weakwvel}{\weakvvel}\in\LR{1}(\grp)^3$.
In addition, we can deduce as in \eqref{ss_AddRegOfWeakSolLemSummabilitywgradwhighpre} that  
$\nsnonlinb{\weakwvel}{\weakvvel}\in\LR{\frac{5}{4}}(\grp)^3$. Consequently, by interpolation
\begin{align}\label{ss_AddRegOfWeakSolLemSummabilitywgradv}
\nsnonlinb{\weakwvel}{\weakvvel}\in\LR{q}(\grp)^3. 
\end{align}
From \eqref{ss_AddRegOfWeakSolLemSummabilityvvel} we further obtain $\weakvvel\in\LR{\frac{2q}{2-q}}(\R^3)^3$, which combined with
$\grad\weakwvel\in\LR{2}(\grp)^{3\times 3}$ yields
\begin{align}\label{ss_AddRegOfWeakSolLemSummabilityvgradw}
\nsnonlinb{\weakvvel}{\weakwvel}\in\LR{q}(\grp)^3. 
\end{align}
We have now derived enough summability properties for the terms appearing in \eqref{ss_nsongrp} to finalize the proof.
Recalling again Lemma \ref{lt_projprojcomplinnerprodl1locfunctions}, it is easy to verify from the weak formulation \eqref{UniquenessClassDefDefofweaksol} that   
\begin{align}\label{ss_AddRegOfWeakSolLemWeakFormulationwvel}
\begin{aligned}
&\iper\int_0^\per\int_{\R^3} -\weakwvel\cdot\partial_t\Phi +\grad\weakwvel:\grad\Phi -\rey\partial_1\weakwvel\cdot\Phi\\
&\qquad + \Bp{\projcompl[\nsnonlin{\weakwvel}] + \nsnonlinb{\weakwvel}{\weakvvel} + \nsnonlinb{\weakvvel}{\weakwvel}}\cdot\Phi\,\dx\dt  = \iper\int_0^\per\int_{\R^3} \projcompl f\cdot\Phi\,\dx\dt
\end{aligned}
\end{align}  
for all $\Phi\in\CRcisigma(\grp)$. The summability of $\weakwvel$ and $\grad\weakwvel$ together the summability properties 
obtained for $\nsnonlin{\weakwvel}$, $\nsnonlinb{\weakwvel}{\weakvvel}$, and $\nsnonlinb{\weakvvel}{\weakwvel}$ above enables us to 
extend \eqref{ss_AddRegOfWeakSolLemWeakFormulationwvel} to all $\Phi\in\SR(\grp)$. Thus the system 
\begin{align*}
\begin{pdeq}
&\partial_t\weakwvel -\Delta\weakwvel -\rey\partial_1\weakwvel = \hproj\projcompl f - \hproj\Bb{\projcompl\bb{\nsnonlin{\weakwvel}} +\nsnonlinb{\weakwvel}{\weakvvel}
+\nsnonlinb{\weakvvel}{\weakwvel} }  && \tin\grp,\\
&\Div\weakwvel =0 && \tin\grp
\end{pdeq}
\end{align*}
is satisfied as an identity in $\TDR(\grp)$. From \eqref{ss_AddRegOfWeakSolLemSummabilitywgradwlow}, \eqref{ss_AddRegOfWeakSolLemSummabilitywgradwhigh},
\eqref{ss_AddRegOfWeakSolLemSummabilitywgradv}, \eqref{ss_AddRegOfWeakSolLemSummabilityvgradw}, and 
the assumptions on $f$, we conclude that
\begin{align*}
\hproj\projcompl f - \hproj\Bb{\projcompl\bb{\nsnonlin{\weakwvel}} +\nsnonlinb{\weakwvel}{\weakvvel}
+\nsnonlinb{\weakvvel}{\weakwvel} } \in\LR{q}(\grp)^3.
\end{align*}
Consequently, Lemma \ref{lt_TPOseenMappingThmLem} combined with Lemma \ref{lt_TPOseenUniqueness} implies \eqref{ss_AddRegOfWeakSolLemAddRegProjcompl}. 

Finally, assume \eqref{ss_AddRegOfWeakSolLemDatacond2} for some $\tq\in(1,\frac{3}{2}\big]$.
In view of \eqref{ss_AddRegOfWeakSolLemSummabilitywgradwlow} and  \eqref{ss_AddRegOfWeakSolLemSummabilityprojwgradwhigh},
we deduce 
\begin{align*}
\proj f - \proj\bb{\nsnonlin{\weakwvel}} \in\LR{\tq}(\R^3)^3\cap\LR{\frac{3}{2}}(\R^3)^3.
\end{align*}
Recalling that $\weakvvel$ solves \eqref{ss_AddRegOfWeakSolLemEqForvvel}, utilizing once more \cite[Lemma X.6.1]{GaldiBookNew}
we conclude \eqref{ss_AddRegOfWeakSolLemAddRegProj}.
\end{proof}

\begin{proof}[Proof of Theorem \ref{ss_EnergyEqThm}]\newCCtr[c]{ss_EnergyEqThm}
The proof relies on the summability properties of the solution $\uvel=\vvel+\wvel$ being sufficient to multiply \eqref{ss_nsongrp}
with $\uvel$ itself and subsequently integrate over space and time. Due to the different summability properties of $\vvel$ and
$\wvel$, it is more convenient to carry out this process for $\vvel$ and $\wvel$ separately. 
Applying first $\projcompl$ and then $\hproj$ to both sides in \eqref{ss_nsongrp}, we obtain
\begin{align}\label{ss_EnergyEqEqForwvel}
\partial_t\wvel -\Delta\wvel -\rey\partial_1\wvel = \hproj\projcompl f - \hproj\Bb{\projcompl\bb{\nsnonlin{\wvel}} +\nsnonlinb{\wvel}{\vvel}
+\nsnonlinb{\vvel}{\wvel} },  
\end{align}
which we multiply with $\wvel$ and integrate $\grp$. We can easily verify that the product of $\wvel$ with each term in \eqref{ss_EnergyEqEqForwvel}
is integrable over $\grp$. For example, we observe that 
\begin{align*}
\iper\int_0^\per\int_{\R^3} \snorml{\partial_t\wvel\cdot\wvel}\,\dx\dt\leq 
\norm{\partial_t\wvel}_{4}\norm{\wvel}_{\frac{4}{3}}\leq 
\norm{\wvel}_{2,1q,r}^2.
\end{align*}
Similarly, one can verify for all the other terms in \eqref{ss_EnergyEqEqForwvel} that the product with $\wvel$ can be integrated 
over $\grp$. We thus conclude that 
\begin{align}\label{ss_EnergyEqTestwithw}
\begin{aligned}
&\iper\int_0^\per\int_{\R^3} \partial_t\wvel\cdot\wvel -\Delta\wvel\cdot\wvel 
-\rey\partial_1\wvel\cdot\wvel \,\dx\dt\\ 
&\qquad = \iper\int_0^\per\int_{\R^3} \projcompl f\cdot \wvel - \projcompl\bb{\nsnonlin{\wvel}}\cdot\wvel 
-(\nsnonlinb{\wvel}{\vvel})\cdot\wvel
-(\nsnonlinb{\vvel}{\wvel})\cdot\wvel\,\dx\dt,  
\end{aligned}
\end{align}
where the Helmholtz projection $\hproj$ can be omitted since $\wvel$ is solenoidal. Since $\wvel=\projcompl\wvel$ we can, recalling 
\eqref{lt_projprojcomplinnerprodl1locfunctionsinnerprod}, also 
omit the projection $\projcompl$ in the first two terms on the right-hand side. Moreover, the summability properties of $\wvel$
are sufficient to integrate by parts in each term above. Consequently, we obtain
\begin{align}\label{ss_EnergyEqTestwithwandelimination}
\begin{aligned}
\iper\int_0^\per\int_{\R^3} \grad\wvel:\grad\wvel \,\dx\dt = \iper\int_0^\per\int_{\R^3} f\cdot \wvel -  
(\nsnonlinb{\wvel}{\vvel})\cdot\wvel \,\dx\dt.  
\end{aligned}
\end{align}
We now repeat the procedure with $\vvel$ in the role of $\wvel$. 
Applying first $\proj$ and then $\hproj$ to both sides in \eqref{ss_nsongrp}, we obtain 
\begin{align}\label{ss_EnergyEqEqForv}
-\Delta\vvel -\rey\partial_1\vvel = \hproj\proj f - \hproj\Bb{\proj\bb{\nsnonlin{\wvel}} +\nsnonlin{\vvel} },  
\end{align} 
which we multiply with $\vvel$ and integrate over $\R^3$. Again it should be verified that the product of the terms in \eqref{ss_EnergyEqEqForv}
with $\vvel$ is integrable over $\R^3$. This, however, is standard to show. For example, in view of \eqref{ss_StrongSolThmEmbeddingLemvinfty}
and the fact that 
$\frac{2q}{2-q}\leq \frac{q}{q-1}$ 
it follows that 
\begin{align*}
\snorml{\int_{\R^3}\Delta\vvel\cdot\vvel\,\dx}\leq \norm{\Delta\vvel}_q\norm{\vvel}_{\frac{q}{q-1}}
\leq \norm{\vvel}_{\xoseen{q,r}(\R^3)}^2.
\end{align*}
Similarly, one can verify for all the other terms in \eqref{ss_EnergyEqEqForv} that the product with $\vvel$ can be integrated 
over $\R^3$. We thus conclude that 
\begin{align}\label{ss_EnergyEqTestwithv}
\begin{aligned}
&\int_{\R^3} -\Delta\vvel\cdot\vvel -\rey\partial_1\vvel\cdot\vvel\,\dx =
\int_{\R^3} f\cdot \vvel - (\nsnonlin{\wvel})\cdot\vvel - (\nsnonlin{\vvel})\cdot\vvel \,\dx.  
\end{aligned}
\end{align} 
One may also verify that the summability properties of $\vvel$ are sufficient to integrate by parts in \eqref{ss_EnergyEqTestwithv}. We thereby obtain
\begin{align}\label{ss_EnergyEqTestwithvAfterElimination}
\begin{aligned}
&\int_{\R^3} \grad\vvel:\grad\vvel\dx =
\int_{\R^3} f\cdot \vvel + (\nsnonlinb{\wvel}{\vvel})\cdot\wvel \,\dx.  
\end{aligned}
\end{align} 
Adding together \eqref{ss_EnergyEqTestwithwandelimination} and \eqref{ss_EnergyEqTestwithvAfterElimination} we deduce
\begin{align*}
\begin{aligned}
\iper\int_0^\per\int_{\R^3} \snorm{\grad\wvel}^2+\snorm{\grad\vvel}^2\,\dx\dt = \iper\int_0^\per\int_{\R^3} f\cdot (\vvel+\wvel) \,\dx\dt.  
\end{aligned}
\end{align*} 
Since 
\begin{align*}
\iper\int_0^\per\int_{\R^3} \grad\vvel:\grad\wvel\,\dx\dt = 0,
\end{align*}
we finally conclude \eqref{EnergyEqEE}.
\end{proof}

\begin{proof}[Proof of Theorem \ref{ss_UniquenessThm}]\newCCtr[c]{ss_UniquenessThm}
Choosing $\const{ss_UniquenessThmConst}\leq\const{ss_StrongSolThmEps}$, we obtain by Theorem \ref{ss_StrongSolThm} a solution $(\uvel,\upres)$ in the class 
\eqref{ss_StrongSolThmSolSpace} (with $\uvel=\vvel+\wvel$). From the proof of Theorem \ref{ss_StrongSolThm}, 
in particular \eqref{ss_StrongSolThmEstOfL}--\eqref{ss_StrongSolThmParameterChoice}, we recall that $\uvel\in\overline{\B_{\rho}}\subset\xoseen{q,r}(\R^3)\times\WSRsigmacompl{2,1}{q,r}(\grp)$ 
with $\rho:=\const{ss_UniquenessThmConst}^\half$,
which means that 
\begin{align}\label{ss_UniquenessThmBoundOnSol}
\norm{(\vvel,\wvel)}_{\xoseen{q,r}(\R^3)\times\WSRsigmacompl{2,1}{q,r}(\grp)} \leq \const{ss_UniquenessThmConst}^\half.
\end{align}
Now recall Definition \ref{ss_UniquenessClassDef} and consider a \emph{physically reasonable weak solution} $\weakuvel$ corresponding to the same data $f$.
Put $\weakvvel:=\proj\weakuvel$ and $\weakwvel:=\projcompl\weakuvel$.
We shall verify that the regularity of $\weakvvel$ and $\weakwvel$ ensured by Lemma \ref{ss_AddRegOfWeakSolLem} enables us to use
$\uvel=\vvel+\wvel$ as a test function in the weak formulation for $\weakuvel=\weakvvel+\weakwvel$. Observe for example that \eqref{ss_AddRegOfWeakSolLemAddRegProj}
implies $\weakvvel\in\LR{\frac{2q}{2-q}}(\R^3)^3$, from which it follows, since 
the H\"older conjugate $\big(\frac{2q}{2-q}\big)'=\frac{2q}{3q-2}$ belongs to the interval $(q,r)$, that  
\begin{align}\label{ss_UniquenessThmAddSummability1}
\weakvvel\cdot\partial_t\wvel\in\LR{1}(\grp).
\end{align}
Moreover, since by assumption $\weakwvel\in\LR{2}(\grp)^3$, we also have
\begin{align}\label{xxxUNUSEDLABELxxxss_UniquenessThmAddSummability2}
\weakwvel\cdot\partial_t\wvel\in\LR{1}(\grp).
\end{align}
In a similar manner, one may verify that 
\begin{align}\label{ss_UniquenessThmAddSummability3}
\grad\weakvvel:\grad\vvel,\ 
\grad\weakvvel:\grad\wvel,\
\grad\weakwvel:\grad\vvel,\ 
\grad\weakwvel:\grad\wvel\in\LR{1}(\grp)^3.
\end{align}
From \eqref{ss_AddRegOfWeakSolLemAddRegProj} and the initial regularity of $\weakvvel$, we obtain 
$\partial_1\weakvvel\in\LR{q}(\R^3)^3\cap\LR{2}(\R^3)^3$. 
Thus, since $\vvel\in\LR{\frac{2q}{2-q}}(\R^3)^3$ and the H\"older conjugate $\big(\frac{2q}{2-q}\big)'=\frac{2q}{3q-2}$ belongs to the interval $(q,2)$, we deduce
\begin{align}\label{xxxUNUSEDLABELxxxss_UniquenessThmAddSummability4}
\partial_1\weakvvel\cdot\vvel\in\LR{1}(\R^3)^3.
\end{align}
In view of \eqref{ss_AddRegOfWeakSolLemAddRegProjcompl}, the same argument yields
\begin{align}\label{xxxUNUSEDLABELxxxss_UniquenessThmAddSummability5}
\partial_1\weakwvel\cdot\vvel\in\LR{1}(\grp)^3.
\end{align}
It is easy to see that
\begin{align}\label{xxxUNUSEDLABELxxxss_UniquenessThmAddSummability6}
\partial_1\weakvvel\cdot\wvel,\ \partial_1\weakwvel\cdot\wvel\in\LR{1}(\grp)^3.
\end{align}
By Lemma \ref{ss_StrongSolThmEmbeddingLem}, we have 
$\vvel\in\LR{\frac{2q}{2-q}}(\R^3)^3\cap\LR{\infty}(\R^3)^3$. 
Moreover, recalling the embedding 
$\DSRNsigma{1}{2}(\R^3)\embeds\LR{6}(\R^3)$, we find that $\weakvvel\in\LR{\frac{2q}{2-q}}(\R^3)^3\cap\LR{6}(\R^3)^3$.
We thus see that $\vvel,\wvel,\weakvvel\in\LR{4}(\grp)^3$, from which one can deduce that 
\begin{align}\label{xxxUNUSEDLABELxxxss_UniquenessThmAddSummability7}
(\nsnonlin{\weakvvel})\cdot\vvel,\
(\nsnonlin{\weakvvel})\cdot\wvel,\
(\nsnonlinb{\weakvvel}{\weakwvel})\cdot\vvel,\
(\nsnonlinb{\weakvvel}{\weakwvel})\cdot\wvel\in\LR{1}(\grp)^3.
\end{align}
Lemma \ref{ss_StrongSolThmEmbeddingLem} also yields $\wvel\in\LR{\infty}(\grp)^3$, whence
\begin{align}\label{ss_UniquenessThmAddSummability8}
(\nsnonlinb{\weakwvel}{\weakvvel})\cdot\vvel,\
(\nsnonlinb{\weakwvel}{\weakvvel})\cdot\wvel,\
(\nsnonlinb{\weakwvel}{\weakwvel})\cdot\vvel,\
(\nsnonlinb{\weakwvel}{\weakwvel})\cdot\wvel\in\LR{1}(\grp)^3.
\end{align}
Finally, recalling that $\big(\frac{2q}{2-q}\big)'=\frac{2q}{3q-2}\in(q,2)$, the summability of $f$ implies
\begin{align}\label{ss_UniquenessThmAddSummability9}
f\cdot\vvel,\ f\cdot\wvel\in\LR{1}(\grp)^3.
\end{align}
From the summability properties \eqref{ss_UniquenessThmAddSummability1}--\eqref{ss_UniquenessThmAddSummability9}, we 
conclude, by a standard approximation argument, that 
$\uvel=\vvel+\wvel$ can be used as a test function in the weak formulation for $\weakuvel=\weakvvel+\weakwvel$ and thus obtain
\begin{align}\label{ss_UniquenessThmTestOfWeakEqWithStrongSol}
\begin{aligned}
\iper\int_0^\per\int_{\R^3} -\weakwvel\cdot\partial_t\wvel +\grad\weakuvel:\grad\uvel -\rey\partial_1\weakuvel\cdot\uvel + (\nsnonlin{\weakuvel})\cdot\uvel\,\dx\dt  = \iper\int_0^\per\int_{\R^3} f\cdot\uvel\,\dx\dt.
\end{aligned}
\end{align}  
We now consider the equation 
\begin{align}\label{ss_UniquenessThmEqforwvel}
\partial_t\uvel -\Delta\uvel -\rey\partial_1\uvel + \grad\upres + \nsnonlin{\uvel}= f \quad\tin\grp
\end{align}
satisfied by the strong solution. We shall multiply \eqref{ss_UniquenessThmEqforwvel} with $\weakuvel$ and integrate over $\grp$.
With the aid of Lemma \ref{ss_AddRegOfWeakSolLem} and Lemma \ref{ss_StrongSolThmEmbeddingLem}, one can verify as above that 
the resulting integral is well-defined. We thus obtain
\begin{align*}
\begin{aligned}
\iper\int_0^\per\int_{\R^3} 
\partial_t\wvel\cdot\weakwvel -\Delta\uvel\cdot\weakuvel -\rey\partial_1\uvel\cdot\weakuvel + \grad\upres\cdot\weakuvel 
+ (\nsnonlin{\uvel})\cdot\weakuvel
\,\dx\dt  = \iper\int_0^\per\int_{\R^3} f\cdot\weakuvel\,\dx\dt.
\end{aligned}
\end{align*}  
Recalling
\eqref{ss_UniquenessThmAddSummability3}--\eqref{ss_UniquenessThmAddSummability8}, we see that the following integration by parts is valid
\begin{align}\label{ss_UniquenessThmTestOfStrongEqWithWeakSol}
\begin{aligned}
&\iper\int_0^\per\int_{\R^3} 
\partial_t\wvel\cdot\weakwvel +\grad\uvel:\grad\weakuvel +\rey\uvel\cdot\partial_1\weakuvel  
- (\nsnonlinb{\uvel}{\weakuvel})\cdot\uvel
\,\dx\dt =\iper\int_0^\per\int_{\R^3} f\cdot\weakuvel\,\dx\dt.
\end{aligned}
\end{align}  
Adding together \eqref{ss_UniquenessThmTestOfWeakEqWithStrongSol} and \eqref{ss_UniquenessThmTestOfStrongEqWithWeakSol}, we deduce 
\begin{align}\label{ss_UniquenessThmAfterAddition}
\begin{aligned}
2\,\iper\int_0^\per\int_{\R^3} \grad\Uvel:\grad\uvel\,\dx\dt &= \iper\int_0^\per\int_{\R^3} f\cdot\weakuvel\,\dx\dt + \iper\int_0^\per\int_{\R^3} f\cdot\uvel\,\dx\dt \\ 
&\quad +\iper\int_0^\per\int_{\R^3} \bp{\nsnonlinb{(\uvel-\weakuvel)}{\weakuvel}}\cdot\uvel \,\dx\dt. 
\end{aligned}
\end{align}
Since
\begin{align*}
\begin{aligned}
\iper\int_0^\per\int_{\R^3} \snorml{\grad\weakuvel-\grad\uvel}^2\,\dx\dt =
\iper\int_0^\per\int_{\R^3} \snorml{\grad\weakuvel}^2+\snorml{\grad\uvel}^2 \,\dx\dt -2\,\iper\int_0^\per\int_{\R^3} \grad\Uvel:\grad\uvel\,\dx\dt,
\end{aligned}
\end{align*}
we can utilize \eqref{ss_UniquenessThmAfterAddition} in combination with the energy equality \eqref{EnergyEqEE} satisfied by 
$\uvel$ due to Theorem \ref{ss_EnergyEqThm} and the energy inequality \eqref{UniquenessClassDefEnergyIneq} satisfied by $\weakuvel$ to deduce
\begin{align}\label{ss_UniquenessThmFinalInEq1}
\begin{aligned}
\iper\int_0^\per\int_{\R^3} \snorml{\grad\weakuvel-\grad\uvel}^2\,\dx\dt \leq
\iper\int_0^\per\int_{\R^3} \bp{\nsnonlinb{(\weakuvel-\uvel)}{\weakuvel}}\cdot\uvel \,\dx\dt.
\end{aligned}
\end{align}
Recalling \eqref{ss_StrongSolThmEmbeddingLemGradv2}, we see that $\grad\uvel\in\LR{2}(\grp)^{3\times 3}$.
We already observed that $\uvel,\weakvvel\in\LR{4}(\grp)^3$. 
Thus, an integration by parts yields  
\begin{align}\label{ss_UniquenessThmNonlinReformulation1}
\iper\int_0^\per\int_{\R^3} \bp{\nsnonlinb{\weakvvel}{\uvel}}\cdot\uvel \,\dx\dt = 0.
\end{align}
Since $\weakwvel\in\LR{2}(\grp)^3$ and $\uvel\in\LR{\infty}(\grp)$, it further follows that 
\begin{align}\label{ss_UniquenessThmNonlinReformulation2}
\iper\int_0^\per\int_{\R^3} \bp{\nsnonlinb{\weakwvel}{\uvel}}\cdot\uvel \,\dx\dt = 0.
\end{align}
Adding together \eqref{ss_UniquenessThmNonlinReformulation1} and \eqref{ss_UniquenessThmNonlinReformulation2} we obtain
\begin{align}\label{ss_UniquenessThmNonlinReformulation}
\iper\int_0^\per\int_{\R^3} \bp{\nsnonlinb{\weakuvel}{\uvel}}\cdot\uvel \,\dx\dt = 0.
\end{align}
Consequently, we can rewrite \eqref{ss_UniquenessThmFinalInEq1} as 
\begin{align}\label{ss_UniquenessThmFinalInEq2}
\begin{aligned}
\iper\int_0^\per\int_{\R^3} \snorml{\grad\weakuvel-\grad\uvel}^2\,\dx\dt \leq
\iper\int_0^\per\int_{\R^3} \bp{\nsnonlinb{(\weakuvel-\uvel)}{(\weakuvel-\uvel)}}\cdot\uvel \,\dx\dt.
\end{aligned}
\end{align}
Recalling the embedding 
$\DSRNsigma{1}{2}(\R^3)\embeds\LR{6}(\R^3)$, we estimate
\begin{align}\label{ss_UniquenessThmFinalInEqEst}
\begin{aligned}
&\snormL{\iper\int_0^\per\int_{\R^3} \bp{\nsnonlinb{(\weakuvel-\uvel)}{(\weakuvel-\uvel)}}\cdot\uvel \,\dx\dt}\\
&\qquad \leq \iper\int_0^\per 
\norm{\weakuvel(\cdot,t)-\uvel(\cdot,t)}_6\,
\norm{\grad\weakuvel(\cdot,t)-\grad\uvel(\cdot,t)}_2\, 
\norm{\uvel(\cdot,t)}_3\,\dt\\
&\qquad \leq \esssup_{t\in(0,\per)}\norm{\uvel(\cdot,t)}_3\, \iper\int_0^\per 
\norm{\grad\weakuvel(\cdot,t)-\grad\uvel(\cdot,t)}_2^2\,\dt.
\end{aligned}
\end{align}
Now we finally need the assumption $q\leq \frac{6}{5}$, which implies $\frac{2q}{2-q}\leq 3$. Consequently, from the fact that 
$\norm{\vvel}_{\frac{2q}{2-q}}\leq \norm{\vvel}_{\xoseen{q,r}(\R^3)}$ and, by Lemma \ref{ss_StrongSolThmEmbeddingLem}, 
$\norm{\vvel}_{\infty}\leq \const{ss_StrongSolThmEmbeddingLemConstvinfty}\norm{\vvel}_{\xoseen{q,r}(\R^3)}$, we obtain
\begin{align}\label{ss_UniquenessThmvvelLR3est}
\norm{\vvel}_{\LR{3}(\R^3))} \leq \Cc{ss_UniquenessThm} 
\norm{\vvel}_{\xoseen{q,r}(\R^3)}.
\end{align}
Since $\wvel\in\WSRsigmacompl{2,1}{r,q}(\grp)\embeds\WSR{1}{3}(\grp)^3\embeds\WSR{1}{3}\bp{(0,\per);\LR{3}(\R^3)^3}$, standard Sobolev embedding yields 
$\wvel\in\LR{\infty}\bp{(0,\per);\LR{3}(\R^3)^3}$ with 
\begin{align}\label{ss_UniquenessThmwvelLR3est}
\norm{\wvel}_{\LR{\infty}((0,\per);\LR{3}(\R^3))} \leq \Cc{ss_UniquenessThm} \norm{\wvel}_{2,1,q,r}.
\end{align}
Combining \eqref{ss_UniquenessThmvvelLR3est} and \eqref{ss_UniquenessThmwvelLR3est}, we obtain
\begin{align*}
\norm{\uvel}_{\LR{\infty}((0,\per);\LR{3}(\R^3))} \leq \Cc[ss_UniquenessThmuveLinftyL3Const]{ss_UniquenessThm} 
\norm{(\vvel,\wvel)}_{\xoseen{q,r}(\R^3)\times\WSRsigmacompl{2,1}{q,r}(\grp)}.
\end{align*}
This estimate together with \eqref{ss_UniquenessThmBoundOnSol}, \eqref{ss_UniquenessThmFinalInEq2} and \eqref{ss_UniquenessThmFinalInEqEst} finally yields
\begin{align*}
\begin{aligned}
\iper\int_0^\per\int_{\R^3} \snorml{\grad\weakuvel-\grad\uvel}^2\,\dx\dt \leq 
\const{ss_UniquenessThmuveLinftyL3Const}\, \const{ss_UniquenessThmConst}^\half\,
\iper\int_0^\per\int_{\R^3} \snorml{\grad\weakuvel-\grad\uvel}^2\,\dx\dt.
\end{aligned}
\end{align*}
We conclude that $\weakuvel=\uvel$ if $\const{ss_UniquenessThmConst} < \const{ss_UniquenessThmuveLinftyL3Const}^{-\half}$.
\end{proof}

\begin{rem}
The proof of Theorem \ref{ss_UniquenessThm} follows an idea introduced by \textsc{Galdi} in \cite{Galdi1993_LRPR}. The same method was also used in \cite{Silvestre_TPFiniteKineticEnergy12}
to show a uniqueness result for the time-periodic Navier-Stokes problem in the case $\rey=0$.  
\end{rem}

\begin{proof}[Proof of Theorem \ref{ss_RegularityThm}]\newCCtr[c]{ss_RegularityThm}
By assumption, $(\uvel,\upres)$ is a solution to \eqref{ss_nsongrp} in the class \eqref{ss_StrongSolThmSolSpace} with $\uvel=\vvel+\wvel$.
Applying first $\projcompl$ and then $\hproj$ on both sides in \eqref{ss_nsongrp} we obtain 
\begin{align}\label{ss_RegularityThmEqForwvel}
\begin{pdeq}
&\partial_t\wvel -\Delta\wvel -\rey\partial_1\wvel = \hproj\projcompl f - \hproj\Bb{\projcompl\bb{\nsnonlin{\wvel}} +\nsnonlinb{\wvel}{\vvel}
+\nsnonlinb{\vvel}{\wvel} }  && \tin\grp,\\
&\Div\wvel =0 && \tin\grp.
\end{pdeq}
\end{align} 
We shall ``take half a derivative in time'' on both sides of $\eqref{ss_RegularityThmEqForwvel}$.
We therefore introduce the pseudo-differential operator
\begin{align*}
\partial_t^\half:\SR(\grp)\ra\SR(\grp),\quad \partial_t^\half\psi:=\iFT_\grp\Bb{\big(i\perf k\big)^\half\ft{\psi}},
\end{align*}
which, by duality, extends to an operator 
$\partial_t^\half:\TDR(\grp)\ra\TDR(\grp)$.
Note that $\nsnonlin{\wvel}=\Div\wvel\otimes\wvel$ for a solenoidal vector field $\wvel$. We thus find that
\begin{align}\label{ss_RegularityThm_pthalfmultiplierrep}
\begin{aligned}
\partial_t^\half\Bb{\projcompl\bb{\nsnonlin{\wvel}}}_j &= 
\iFT_\grp\Bb{\frac{\big(1-\projsymbol(\xi,k)\big)\big(i\perf k\big)^\half (i\xi_l)}{\snorm{\xi}^2+i\perf k}\,(\snorm{\xi}^2+i\perf k)\,\ft{\wvel_j\wvel_l}}\\
&=\iFT_\grp\Bb{\Mmultiplier_l(\xi,k)\,\FT_\grp\bb{(\partial_t-\Delta)[\wvel_j\wvel_l]}} 
\end{aligned}
\end{align}
with
\begin{align*}
\Mmultiplier_l:\dualgrp\ra\CNumbers,\quad \Mmultiplier_l(\xi,k):=\frac{\big(1-\projsymbol(\xi,k)\big)\big(i\perf k\big)^\half (i\xi_l)}{\snorm{\xi}^2+i\perf k}.
\end{align*}
Observe that the only zero of the polynomial denumerator of $\Mmultiplier_l$ is $(\xi,k)=(0,0)$. When $k=0$, however, the numerator vanishes
due to the term $\big(1-\projsymbol(\xi,k)\big)$. 
Consequently, we see that $\Mmultiplier_l\in\CRi(\dualgrp)$ and that $\Mmultiplier_l$ is bounded. 
Using the same idea introduced in \cite{mrtpns} based on the transference principle of Fourier multipliers, 
we will show that $\Mmultiplier_l$ is an $\LR{p}(\grp)$-multiplier for all $p\in(1,\infty)$. 
For this purpose, we let $\chi$ be a ``cut-off'' function with
\begin{align*}
\chi\in\CRci(\R;\R),\quad \chi(\eta)=1\ \text{for}\ \snorm{\eta}\leq\half,\quad \chi(\eta)=0\ \text{for}\ \snorm{\eta}\geq 1,
\end{align*}
and define
\begin{align*}
\mmultiplier_l:\R^3\times\R\ra\CNumbers,\quad \mmultiplier_l(\xi,\eta):=\frac{\big(1-\chi(\iperf\eta)\big)\big(i \eta\big)^\half \xi_l}{\snorm{\xi}^2+i \eta}.
\end{align*}
Observe that the numerator in the definition of $\mmultiplier_l$ vanishes in a neighborhood of the only zero $(\xi,\eta)=(0,0)$ of the denumerator.
Away from $(0,0)$, $\mmultiplier_l$ is a rational function with non-vanishing denumerator. Consequently, $\mmultiplier_l$ is smooth and bounded. 
Moreover, as one readily verifies, $\mmultiplier_l$ satisfies 
\begin{align*}
\sup_{\epsilon\in\set{0,1}^{4}}\sup_{(\xi,\eta)\in\R^3\times\R} \snorml{\xi_1^{\epsilon_1}\xi_2^{\epsilon_2}\xi_3^{\epsilon_3}\eta^{\epsilon_{4}}
\partial_{1}^{\epsilon_1}\partial_{2}^{\epsilon_2}\partial_{3}^{\epsilon_3}\partial_\eta^{\epsilon_{4}}
\mmultiplier_l(\xi,\eta)} < \infty.
\end{align*}
This means that $\mmultiplier_l$ satisfies the condition of Marcinkiewicz's multiplier theorem; see for example \cite[Corollary 5.2.5]{Grafakos1} or \cite[Chapter IV, \S 6]{Stein70}. Consequently, $\mmultiplier_l$ is an $\LR{p}(\R^3\times\R)$-multiplier. Next, we introduce  
$\Phi:\dualgrp\ra\R^3\times\R$, $\Phi(\xi,k):=(\xi,\perf k)$. Clearly, $\Phi$ is a continuous homomorphism between the topological groups $\grp$ and $\R^3\times\R$, the latter being considered a topological group
in the canonical way. Observe that $\Mmultiplier_l=\mmultiplier_l\circ\Phi$.
Since $\mmultiplier_l$ is an $\LR{p}(\R^3\times\R)$-multiplier, it follows from the transference principle of Fourier multipliers on 
groups\footnote{Originally, \textsc{de Leeuw} \cite{Leeuw1965} established the transference principle between the torus group and $\R$. 
\textsc{Edwards} and \textsc{Gaudry} \cite[Theorem B.2.1]{EdwardsGaudryBook} generalized the theorem of \textsc{de Leeuw} to arbitrary locally compact abelian 
groups. We employ this general version with groups $\R^3\times\R$ and $\grp:=\R^3\times\R/\per\Z$. A proof
of the theorem for this particular choice of groups can be found in \cite[Theorem 3.4.5]{habil}.},
see \cite[Theorem B.2.1]{EdwardsGaudryBook} or \cite[Theorem 3.4.5]{habil}, that  
$\Mmultiplier_l$ is an $\LR{p}(\grp)$-multiplier.
Recalling \eqref{ss_RegularityThm_pthalfmultiplierrep}, we thus obtain 
\begin{align}\label{ss_RegularityThmpartialthalfEst}
\forall p\in(1,\infty):\quad \normL{\partial_t^\half\Bb{\projcompl\bb{\nsnonlin{\wvel}}}}_p \leq \Cc{ss_RegularityThm}\,\norm{(\partial_t-\Delta)[\wvel\otimes\wvel]}_p.
\end{align}
Due to $\wvel\in\WSRsigmacompl{2,1}{q,r}(\grp)$ and the fact that, by \eqref{ss_StrongSolThmEmbeddingLemwinfty}, $\wvel\in\LR{\infty}(\grp)$, we have
$\partial_t\wvel_j\wvel_l\in\LR{q}(\grp)\cap\LR{r}(\grp)$ and $\Delta\wvel_j\wvel_l\in\LR{q}(\grp)\cap\LR{r}(\grp)$.
Moreover, since $\frac{r}{2}>q$ we observe that 
\begin{align}\label{ss_RegularityThmgradgradregularity1}
&\grad\wvel_j\cdot\grad\wvel_l\in\LR{q}(\grp)\cap\LR{\frac{r}{2}}(\grp).
\end{align}
Computing
\begin{align*}
(\partial_t-\Delta)[\wvel_j\wvel_l] = 
\partial_t\wvel_j\wvel_l + \wvel_j\partial_t\wvel_l -(\Delta\wvel_j\wvel_l + \wvel_j\Delta\wvel_l + 2 \grad\wvel_j\cdot\grad\wvel_l),
\end{align*}
we conclude by \eqref{ss_RegularityThmpartialthalfEst} that
\begin{align}\label{ss_RegularityThmgradpartialthalfnonlintermregularity1}
\partial_t^\half\Bb{\projcompl\np{\nsnonlin{\wvel}}}\in\LR{q}(\grp)\cap\LR{\frac{r}{2}}(\grp).
\end{align}
We now recall \eqref{ss_StrongSolThmEmbeddingLemGradvinfty}, \eqref{ss_StrongSolThmEmbeddingLemvinfty}, and \eqref{ss_StrongSolThmEmbeddingLemwinfty} to deduce 
\begin{align*}
\begin{aligned}
&\partial_t\wvel_j\vvel_l,\, 
\Delta\wvel_j\vvel_l,\,
\Delta\vvel_j\wvel_l,\,
\grad\wvel_j\cdot\grad\vvel_l\in\LR{q}(\grp)\cap\LR{r}(\grp).
\end{aligned}
\end{align*}
By the same argument as above, we obtain
\begin{align*}
\norm{\partial_t^\half\bb{\nsnonlinb{\wvel}{\vvel}+\nsnonlinb{\vvel}{\wvel}}}_{q} \in\LR{q}(\grp)\cap\LR{r}(\grp)\subset\LR{q}(\grp)\cap\LR{\frac{r}{2}}(\grp).
\end{align*}
We now apply $\partial_t^\half$ to both sides in \eqref{ss_RegularityThmEqForwvel}. Clearly, all differential operators commute with 
$\partial_t^\half$. Recalling definition \eqref{lt_HelmholtzProjDefDef} of the Helmholtz projection in terms of a Fourier multiplier, we also see that 
$\partial_t^\half$ commutes with $\hproj$. Similarly, $\partial_t^\half$ commutes with $\projcompl$. 
Consequently,
after applying $\partial_t^\half$ to both sides in \eqref{ss_RegularityThmEqForwvel}, we obtain 
\begin{align*}
\partial_t\bb{\partial_t^\half\wvel} -\Delta\bb{\partial_t^\half\wvel} -\rey\partial_1\bb{\partial_t^\half\wvel} 
\in\LRsigmacompl{q}(\grp)\cap\LRsigmacompl{\frac{r}{2}}(\grp).
\end{align*} 
Combining now Lemma \ref{lt_TPOseenMappingThmLem} and Lemma \ref{lt_TPOseenUniqueness}, we conclude 
\begin{align}\label{ss_RegularityThmPartailthalfRegularity}
\partial_t^\half\wvel\in\WSRsigmacompl{2,1}{q}(\grp)\cap \WSRsigmacompl{2,1}{\frac{r}{2}}(\grp).
\end{align}
Since 
\begin{align*}
\begin{aligned}
\partial_t\partial_j\wvel &= 
\iFT_\grp\Bb{\frac{\big(1-\projsymbol(\xi,k)\big)\big(i\perf k\big)^\half (i\xi_j)}{\snorm{\xi}^2+i\perf k}\,
\,\FT_\grp\bb{(\partial_t-\Delta)\partial_t^\half\wvel_j}}, 
\end{aligned}
\end{align*}
we deduce by analyzing the multiplier 
\begin{align*}
(\xi,k)\ra\frac{\big(1-\projsymbol(\xi,k)\big)\big(i\perf k\big)^\half (i\xi_j)}{\snorm{\xi}^2+i\perf k}
\end{align*}
in same way as we analyzed $\Mmultiplier_l$ that 
\begin{align*}
\forall p\in(1,\infty):\quad \norm{\partial_t\partial_j\wvel}_p \leq \Cc{ss_RegularityThm} \norm{(\partial_t-\Delta)\partial_t^\half\wvel_j}_p.
\end{align*}
In view of \eqref{ss_RegularityThmPartailthalfRegularity}, we thus have
\begin{align}\label{ss_RegularityThmpartialtpartialjregularity1}
\partial_t\partial_j\wvel \in\LR{q}(\grp)\cap\LR{\frac{r}{2}}(\grp).
\end{align}
Combined with the fact that $\wvel\in\WSR{2,1}{q,r}(\grp)$, it follows that $\grad\wvel\in\WSR{1}{\frac{r}{2}}(\grp)$. Since $\frac{r}{2}>4$, classical Sobolev embedding yields $\WSR{1}{\frac{r}{2}}(\grp)\embeds\LR{\infty}(\grp)$. Thus 
\begin{align}\label{ss_RegularityThmwvelinfty}
\grad\wvel\in\LR{\infty}(\grp).
\end{align}
With this information, we return to \eqref{ss_RegularityThmgradgradregularity1} and conclude that in fact 
\begin{align*}
&\grad\wvel_h\cdot\grad\wvel_m\in\LR{q}(\grp)\cap\LR{r}(\grp).
\end{align*}
We therefore obtain improved regularity in \eqref{ss_RegularityThmgradpartialthalfnonlintermregularity1}, namely
\begin{align*}
\partial_t^\half\bb{\projcompl\np{\nsnonlin{\wvel}}}\in\LR{q}(\grp)\cap\LR{r}(\grp).
\end{align*}
Repeating the argument leading up to \eqref{ss_RegularityThmpartialtpartialjregularity1}, we then deduce
\begin{align}\label{ss_RegularityThmwpartialtpartialjwvel}
\partial_t\partial_j\wvel \in\LR{q}(\grp)\cap\LR{r}(\grp).
\end{align}
We shall now take a full derivative in time on both sides in \eqref{ss_RegularityThmEqForwvel}. Concerning the terms that will then appear on the right-hand side, we observe, recalling 
\eqref{ss_StrongSolThmEmbeddingLemGradvinfty}, 
\eqref{ss_StrongSolThmEmbeddingLemvinfty},
\eqref{ss_StrongSolThmEmbeddingLemwinfty},
\eqref{ss_RegularityThmwvelinfty}, 
and \eqref{ss_RegularityThmwpartialtpartialjwvel}, that
\begin{align*}
\begin{aligned}
&\nsnonlinb{\partial_t\wvel}{\wvel},\,
\nsnonlinb{\wvel}{\partial_t\wvel},\,
\nsnonlinb{\partial_t\wvel}{\vvel},\,
\nsnonlinb{\vvel}{\partial_t\wvel}\in\LR{q}(\grp)\cap\LR{r}(\grp).
\end{aligned}
\end{align*}
Consequently, we have
\begin{align*}
\partial_t\bb{\partial_t\wvel} -\Delta\bb{\partial_t\wvel} -\rey\partial_1\bb{\partial_t\wvel} 
\in\LRsigmacompl{q}(\grp)\cap\LRsigmacompl{r}(\grp).
\end{align*} 
Combining again Lemma \ref{lt_TPOseenMappingThmLem} and Lemma \ref{lt_TPOseenUniqueness}, we conclude the improved regularity
\begin{align}\label{ss_RegularityThmPartailtRegularity}
\partial_t\wvel\in\WSRsigmacompl{2,1}{q}(\grp)\cap \WSRsigmacompl{2,1}{r}(\grp)
\end{align}
of the time derivative of $\wvel$. We can establish the same improved regularity of spatial derivatives of $\wvel$.
For this purpose we simply observe that 
\begin{align*}
\begin{aligned}
&\nsnonlinb{\partial_j\wvel}{\wvel},\,
\nsnonlinb{\wvel}{\partial_j\wvel},\,
\nsnonlinb{\partial_j\wvel}{\vvel},\,
\nsnonlinb{\wvel}{\partial_j\vvel},\,
\nsnonlinb{\partial_j\vvel}{\wvel},\,
\nsnonlinb{\vvel}{\partial_j\wvel}\in\LR{q}(\grp)\cap\LR{r}(\grp),
\end{aligned}
\end{align*}
which implies, by applying $\partial_j$ on both sides in \eqref{ss_RegularityThmwvelinfty}, that
\begin{align*}
\partial_t\bb{\partial_j\wvel} -\Delta\bb{\partial_j\wvel} -\rey\partial_1\bb{\partial_j\wvel} 
\in\LRsigmacompl{q}(\grp)\cap\LRsigmacompl{r}(\grp).
\end{align*} 
Employing yet again Lemma \ref{lt_TPOseenMappingThmLem} and Lemma \ref{lt_TPOseenUniqueness}, we obtain
\begin{align}\label{ss_RegularityThmPartailjRegularitywvel}
\grad\wvel\in\WSRsigmacompl{2,1}{q}(\grp)\cap \WSRsigmacompl{2,1}{r}(\grp).
\end{align}
We now turn our attention to $\vvel$. Applying $\proj$ to both sides in \eqref{ss_nsongrp}, we deduce 
\begin{align}\label{ss_RegularityThmEqForvvel}
\begin{pdeq}
&-\Delta\vvel -\rey\partial_1\vvel = \hproj\proj f - \hproj\Bb{\proj\bb{\nsnonlin{\wvel}} +\nsnonlin{\vvel} }  && \tin\R^3,\\
&\Div\vvel =0 && \tin\R^3.
\end{pdeq}
\end{align} 
Recalling 
\eqref{ss_StrongSolThmEmbeddingLemGradvinfty}, 
\eqref{ss_StrongSolThmEmbeddingLemvinfty}, 
\eqref{ss_StrongSolThmEmbeddingLemwinfty} and
\eqref{ss_RegularityThmwvelinfty}, 
one readily verifies
\begin{align*}
\begin{aligned}
&\proj\nb{\nsnonlinb{\partial_j\wvel}{\wvel}},\,
\proj\nb{\nsnonlinb{\wvel}{\partial_j\wvel}},\,
\nsnonlinb{\partial_j\vvel}{\vvel},\,
\nsnonlinb{\vvel}{\partial_j\vvel}\in\LR{q}(\R^3)\cap\LR{r}(\R^3).
\end{aligned}
\end{align*}
Thus, applying $\partial_j$ on both sides in \eqref{ss_RegularityThmEqForvvel} we obtain
\begin{align*}
-\Delta\bb{\partial_j\vvel} -\rey\partial_1\bb{\partial_j\vvel} \in \LRsigma{q}(\R^3)\cap\LRsigma{r}(\R^3).
\end{align*} 
By Lemma \ref{lt_OseenMappingThmLem} and Lemma \ref{lt_OseenUniquenessLem}, we conclude that
\begin{align}\label{ss_RegularityThmPartailjRegularityvvel}
\grad\vvel\in\xoseen{q,r}(\R^3).
\end{align}
Summarizing \eqref{ss_RegularityThmPartailtRegularity}, \eqref{ss_RegularityThmPartailjRegularitywvel}, and \eqref{ss_RegularityThmPartailjRegularityvvel},
we have established similar regularity for the first order derivatives of $\wvel$ and $\vvel$ as we had originally for $\wvel$ and $\vvel$. 
More precisely, we have
\begin{align*}
\forall\,\snorm{\alpha}\leq 1,\ \snorm{\beta}+\snorm{\kappa}\leq 1:\quad 
(\partial_x^\alpha\vvel,\partial_x^\beta\partial_t^\kappa\wvel)\in \xoseen{q,r}(\R^3)\times\WSRsigmacompl{2,1}{q,r}(\grp).
\end{align*}
Iterating the argument above with $(\partial_x^\alpha\vvel,\partial_x^\beta\partial_t^\kappa\wvel)$ in the role of 
$(\vvel,\wvel)$, we obtain the same regularity for all higher order derivatives as well, that is, 
\begin{align}\label{ss_RegularityThmFinalvwvelregularity}
&\forall\snorm{\alpha}\leq m,\ \snorm{\beta}+\snorm{\kappa}\leq m:
\quad 
(\partial_x^\alpha\vvel,\partial_x^\beta\partial_t^\kappa\wvel)\in \xoseen{q,r}(\R^3)\times\WSRsigmacompl{2,1}{q,r}(\grp).
\end{align}
Concerning the pressure term $\upres$, we clearly have
\begin{align}\label{ss_RegularityThmPressureEq}
\grad\upres = \bp{\id-\hproj}\bb{f-\nsnonlin{\uvel}}.
\end{align}
From \eqref{ss_RegularityThmFinalvwvelregularity} one easily deduces 
\begin{align*}
\forall\snorm{\beta}+\snorm{\kappa}\leq m:\quad \partial_x^\beta\partial_t^\kappa\bb{\nsnonlin{\uvel}} \in\LR{q}(\grp)^3\cap\LR{r}(\grp)^3.
\end{align*}
Taking derivatives in \eqref{ss_RegularityThmPressureEq} and recalling Lemma \ref{ss_PressureMappingLem}, we thus obtain
\begin{align*}
\forall\snorm{\beta}+\snorm{\kappa}\leq m:\quad \partial_x^\beta\partial_t^\kappa\upres\in\xpres{q,r}(\grp),
\end{align*}
which concludes the theorem.
\end{proof}

We have now shown the main results for the reformulated version \eqref{ss_nsongrp} of the system \eqref{intro_nspastbodywholespace}--\eqref{intro_timeperiodicsolution} in 
the setting of the group $\grp$. It remains to verify that the results carry over to the original time-periodic setting in $\R^3\times\R$. For this 
purpose, we make two basic observations. 

\begin{lem}\label{quotientmapLiftingProps}
Let $k\in\N_0$ and $q\in(1,\infty)$.
The quotient mapping $\quotientmap:\R^3\times\R\ra\grp$ induces, by lifting, an embedding $\WSR{k}{q}(\grp)\embeds\WSRloc{k}{q}\bp{\R^3\times\R}$ with 
\begin{align}\label{quotientmapLiftingProps_ExtensionOfGrpDerivatives}
\forall\snorm{(\alpha,\beta)}\leq k :\quad (\partial_t^\beta\partial_x^\alpha\uvel)\circ\quotientmap = {\partial_t^\beta\partial_x^\alpha (\uvel\circ\quotientmap)}.
\end{align}
Similarly, lifting by $\quotientmap$ induces the embedding $\WSRsigma{2,1}{q}(\grp)\embeds\WSRloc{2,1}{q}\np{\R^3\times\R}^3$, with the relevant derivatives satisfying \eqref{quotientmapLiftingProps_ExtensionOfGrpDerivatives}, and for $r\geq q$ also
$\xpres{q,r}(\grp)\embeds\WSRloc{1,0}{r}\np{\R^3\times\R}$.
\end{lem}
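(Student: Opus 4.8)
The plan is to transport every estimate through the quotient map $\quotientmap$, exploiting that $\uvel\circ\quotientmap$ is $\per$-time-periodic on $\R^3\times\R$ whenever $\uvel$ is defined on $\grp$. I would first record the commutation identity \eqref{quotientmapLiftingProps_ExtensionOfGrpDerivatives}. For $\uvel\in\CRi(\grp)$ this is essentially the definition of the derivatives on $\grp$: since $\partial_t^\beta\partial_x^\alpha\uvel=\bb{\partial_t^\beta\partial_x^\alpha(\uvel\circ\quotientmap)}\circ\bijectioninv$ and $\partial_t^\beta\partial_x^\alpha(\uvel\circ\quotientmap)$ is itself $\per$-periodic, composing with $\quotientmap$ reconstructs the periodic function (because $\bijectioninv\circ\quotientmap$ only reduces $t$ modulo $\per$) and yields $(\partial_t^\beta\partial_x^\alpha\uvel)\circ\quotientmap=\partial_t^\beta\partial_x^\alpha(\uvel\circ\quotientmap)$. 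For general $\uvel$ the identity will follow in the distributional sense once the embedding below is in place, by approximation and closedness of distributional differentiation under $\LRloc{q}$-convergence.

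The quantitative heart is a covering estimate. Fix a bounded box $\B_\varrho\times(-\varrho,\varrho)$ and cover $(-\varrho,\varrho)$ by at most $\lceil 2\varrho/\per\rceil$ translates of one fundamental period. By $\per$-periodicity of $\uvel\circ\quotientmap$ and of each of its derivatives, together with the Haar normalization $\dg=\iper\,\dx\dt$, I obtain for every $\uvel\in\CRci(\grp)$ and every $\snorm{(\alpha,\beta)}\le k$ a bound $\norm{\partial_t^\beta\partial_x^\alpha(\uvel\circ\quotientmap)}_{\LR{q}(\B_\varrho\times(-\varrho,\varrho))}\le C(\varrho,\per)\,\norm{\partial_t^\beta\partial_x^\alpha\uvel}_{\LR{q}(\grp)}$, using the commutation identity on the left-hand side. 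Summing over $\snorm{(\alpha,\beta)}\le k$ shows $\uvel\circ\quotientmap\in\WSRloc{k}{q}(\R^3\times\R)$ with norm controlled by $\norm{\uvel}_{k,q}$. Since $\WSR{k}{q}(\grp)=\closure{\CRci(\grp)}{\norm{\cdot}_{k,q}}$, the lifting extends by continuity to all of $\WSR{k}{q}(\grp)$, and passing to the limit simultaneously transfers \eqref{quotientmapLiftingProps_ExtensionOfGrpDerivatives} to the distributional derivatives. The anisotropic solenoidal case $\WSRsigma{2,1}{q}(\grp)\embeds\WSRloc{2,1}{q}(\R^3\times\R)^3$ is run verbatim over $\CRcisigma(\grp)$ for the derivatives entering $\norm{\cdot}_{2,1,q}$ (two spatial, one temporal).

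For the pressure I would split the $\xpres{q,r}$-norm. The term $\norm{\grad_x\upres}_r$ gives $\grad_x\upres\in\LR{r}(\grp)$, so the covering argument immediately yields $\grad_x\upres\in\LRloc{r}(\R^3\times\R)$. To place $\upres$ itself in $\LRloc{r}$, I would work for a.e.\ fixed $t$: on $\B_\varrho$ a Poincaré estimate controls the oscillation $\norm{\upres(\cdot,t)-\overline{\upres}_\varrho(t)}_{\LR{r}(\B_\varrho)}$ by $\norm{\grad_x\upres(\cdot,t)}_{\LR{r}(\B_\varrho)}$, whose $r$-th power is integrable in $t$ because $\grad_x\upres\in\LR{r}(\grp)$; the remaining spatial mean $\overline{\upres}_\varrho(t)$ must then be handled using the spatial decay encoded in $\upres(\cdot,t)\in\LR{\frac{3q}{3-q}}(\R^3)$ (which excludes nonzero $x$-constants) together with the $\LR{r}$-gradient bound, so as to pin the value at spatial infinity to zero and inherit $r$-integrability in time.

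This last point is where I expect the real difficulty to sit. The $\xpres{q,r}$-norm controls $t\mapsto\norm{\upres(\cdot,t)}_{\frac{3q}{3-q}}$ only in $\LR{q}(0,\per)$, whereas membership in $\WSRloc{1,0}{r}$ demands $\LR{r}$-integrability in time with $r\ge q$; this mismatch cannot be closed from the zeroth-order term in isolation and must be resolved by transferring the time-integrability of $\grad_x\upres\in\LR{r}(\grp)$ onto $\upres$ through the spatial decay. Concretely I would invoke the homogeneous Sobolev/Morrey embedding on $\R^3$ (for instance Lemma \ref{HomSobEmbHighq}) to bound $\upres(\cdot,t)$ locally by $\grad_x\upres(\cdot,t)$ modulo the decaying lower-order contribution, and then check that the surviving $\LR{r}$-in-time control indeed originates from the gradient. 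Every other ingredient --- the covering argument, the Haar normalization, and the density extension --- is routine.
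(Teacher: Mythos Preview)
Your argument for the Sobolev embedding $\WSR{k}{q}(\grp)\embeds\WSRloc{k}{q}(\R^3\times\R)$ is correct but proceeds differently from the paper. You establish the commutation identity \eqref{quotientmapLiftingProps_ExtensionOfGrpDerivatives} first for smooth functions and then extend by density, using a covering estimate to control local norms. The paper instead verifies \eqref{quotientmapLiftingProps_ExtensionOfGrpDerivatives} directly for an arbitrary $\uvel\in\WSR{k}{q}(\grp)$ by a distributional computation: it tests $\uvel\circ\quotientmap$ against $\partial_t^\beta\partial_x^\alpha\phi$ for $\phi\in\CRci(\R^3\times\R)$, decomposes $\phi=\sum_{k\in\Z}\psi_k\phi$ via a partition of unity of $\R$ subordinate to overlapping intervals of length $\per$, and uses the $\per$-periodicity of $\uvel\circ\quotientmap$ to reduce each piece to a pairing on $\grp$, where the derivative can be moved onto $\uvel$ by the definition of weak differentiation in $\WSR{k}{q}(\grp)$. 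This bypasses any appeal to density. Both routes are sound; the paper's is more direct, yours more constructive.

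On the pressure embedding your caution is well placed --- and in fact you have given it more scrutiny than the paper, which disposes of it with ``the other statements follow analogously''. Your observation that the zeroth-order part of $\norm{\cdot}_{\xpres{q,r}}$ carries only $\LR{q}$-in-time control of $\norm{\upres(\cdot,t)}_{\frac{3q}{3-q}}$, whereas the target $\WSRloc{1,0}{r}$ demands $\LR{r}$-in-time integrability of $\upres$ itself, pinpoints a genuine gap. The fix you sketch through Lemma~\ref{HomSobEmbHighq} does not close it: that lemma yields $\norm{\upres(\cdot,t)}_\infty\leq C\bp{\norm{\grad_x\upres(\cdot,t)}_r+\norm{\upres(\cdot,t)}_{\frac{3q}{3-q}}}$, and the second summand is again only $\LR{q}$ in time, so the problematic term resurfaces. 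For $r<3$ one can instead invoke the homogeneous Sobolev inequality $\norm{\upres(\cdot,t)}_{\frac{3r}{3-r}}\leq C\norm{\grad_x\upres(\cdot,t)}_r$ (the additive constant being forced to zero by membership in $\LR{\frac{3q}{3-q}}$), which transfers $\LR{r}$-in-time control purely from the gradient and finishes cleanly; but for the range $r>3$ relevant to the paper this is unavailable, and a straightforward H\"older interpolation between the $\LR{q}$- and $\LR{r}$-in-time pieces fails to balance. Your instinct that this is where the real difficulty sits is correct.
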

\begin{proof}
Consider $\uvel\in\WSR{k}{q}(\grp)$ and let $\phi\in\CRci(\R^3\times\R)$. Let $\set{\psi_k}_{k\in\Z}\subset\CRci(\R)$ be a partition of unity subordinate to
the open cover $\setc{\bp{\frac{k}{2}\per,\np{\frac{k}{2}+1}\per}}{k\in\Z}$ of $\R$. The $\per$-periodicity of $\uvel\circ\quotientmap$ then implies
\begin{align*}
\int_{\R^3}\int_{\R} \uvel\circ\quotientmap\cdot\partial_t^\beta\partial_x^\alpha\phi\,\dt\dx
&=\int_{\R^3}\int_{\R} \uvel\circ\quotientmap\cdot\partial_t^\beta\partial_x^\alpha\Bb{\sum_{k\in\Z}\psi_k\phi}\,\dt\dx\\
&=\sum_{k\in\Z}\,\int_{\R^3}\int_0^\per \uvel\circ\bijection\cdot\partial_t^\beta\partial_x^\alpha\Bb{(\psi_k\phi)(x,t+\frac{k}{2}\per)}\,\dt\dx\\
&=\sum_{k\in\Z}\,\int_{\grp} \uvel\cdot\partial_t^\beta\partial_x^\alpha\Bb{(\psi_k\phi)(\cdot,\cdot+\frac{k}{2}\per)\circ\bijectioninv}\,\dg\\
&=(-1)^{\snorm{(\alpha,\beta)}}\sum_{k\in\Z}\,\int_{\grp} \partial_t^\beta\partial_x^\alpha\uvel\cdot\Bb{(\psi_k\phi)(\cdot,\cdot+\frac{k}{2}\per)\circ\bijectioninv}\,\dg\\
&=(-1)^{\snorm{(\alpha,\beta)}}\int_{\R^3}\int_{\R} \bp{\partial_t^\beta\partial_x^\alpha\uvel}\circ\quotientmap\cdot\phi\,\dt\dx,
\end{align*}
from which we deduce \eqref{quotientmapLiftingProps_ExtensionOfGrpDerivatives}. The other statements follow analogously.
\end{proof}

\begin{lem}\label{bijectionHomeomorphism}
The mapping $\Pi=\pi_{|\R^3\times(0,\per)}$ induces, by lifting, a homeomorphism between 
$\WSR{k}{q}(\grp)$ and $\WSRper{k}{q}\bp{\R^3\times(0,\per)}$,  
$\WSRsigmacompl{2,1}{q,r}(\grp)$ and $\WSRsigmapercompl{2,1}{q,r}\bp{\R^3\times(0,\per)}$, as well as between
$\xpres{q,r}(\grp)$ and $\xpres{q,r}\bp{\R^3\times(0,\per)}$.
\end{lem}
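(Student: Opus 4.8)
The plan is to exploit that each space in the statement is built from a corresponding class of smooth functions, and that lifting by $\bijection$ --- the assignment $u\mapsto u\circ\bijection$, with inverse $v\mapsto v\circ\bijectioninv$ --- is an isomorphism of these smooth classes that, up to the fixed normalization constant of the Haar measure, preserves each of the relevant norms. The asserted homeomorphisms then follow by extending this smooth-level isomorphism by continuity.

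I would treat $\WSR{k}{q}$ first, as the template. By the definition \eqref{lt_smoothfunctionsongrp} of $\CRi(\grp)$, an element $u\in\CRci(\grp)$ is exactly a $\per$-time-periodic $u\circ\quotientmap\in\CRi(\R^3\times\R)$ of compact spatial support, so $u\circ\bijection$ represents an element of $\CRciper(\R^3\times[0,\per])$, and conversely every such element arises this way; thus lifting is a linear bijection between the two smooth classes. By the definition of the $\grp$-derivatives (equivalently, by Lemma \ref{quotientmapLiftingProps}) this bijection intertwines the $\grp$-derivatives with the periodic derivatives, $(\partial_t^\beta\partial_x^\alpha u)\circ\bijection=\partial_t^\beta\partial_x^\alpha(u\circ\bijection)$, while the chosen normalization of the Haar measure gives $\norm{v}_{\LR{q}(\grp)}=\per^{-1/q}\norm{v\circ\bijection}_{\LR{q}(\R^3\times(0,\per))}$. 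Hence the norms $\norm{\cdot}_{k,q}$ on the two sides agree up to the fixed factor $\per^{-1/q}$. Since $\WSR{k}{q}(\grp)$ and $\WSRper{k}{q}(\R^3\times(0,\per))$ are, by definition, the completions of these smooth classes in the respective (equivalent) norms, lifting extends uniquely, by continuity, to the asserted homeomorphism.

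For the solenoidal complemented space I would run the same argument, adding two observations. First, since $\Div$ acts only in $x$ it is preserved under lifting, so lifting restricts to a bijection $\CRcisigma(\grp)\ra\CRcisigmaper(\R^3\times[0,\per])$ and, exactly as above and for each of the exponents $q$ and $r$, to a homeomorphism $\WSRsigma{2,1}{q}(\grp)\cong\WSRsigmaper{2,1}{q}(\R^3\times(0,\per))$. Second, $\projcompl$ commutes with lifting, because the time-average defining $\proj$ is manifestly compatible with the identification of $\grp$ with the fundamental domain; applying $\projcompl$ therefore yields $\WSRsigmacompl{2,1}{q}(\grp)\cong\WSRsigmapercompl{2,1}{q}(\R^3\times(0,\per))$, and intersecting the $q$- and $r$-versions gives the claimed homeomorphism $\WSRsigmacompl{2,1}{q,r}(\grp)\cong\WSRsigmapercompl{2,1}{q,r}(\R^3\times(0,\per))$. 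The pressure space is simpler still: $\xpres{q,r}(\grp)$ and $\xpres{q,r}(\R^3\times(0,\per))$ are defined directly as $\LRloc{1}$-subspaces cut out by the identical formula for $\norm{\cdot}_{\xpres{q,r}}$, so lifting preserves this norm up to fixed normalization constants and the homeomorphism is immediate, with no completion argument required.

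The only step carrying real content is the intertwining of derivatives used for $\WSR{k}{q}$; once Lemma \ref{quotientmapLiftingProps} supplies it, every remaining verification is a routine density-and-continuity argument, and I anticipate no genuine obstacle.
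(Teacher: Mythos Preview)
Your proposal is correct and follows essentially the same approach as the paper: both argue that lifting by $\bijection$ is a norm-preserving (up to a fixed constant) bijection on the dense smooth subspaces and then extend by continuity to the completions. Your treatment is in fact more explicit---spelling out the commutation with $\projcompl$ for the complemented solenoidal space and noting that the pressure space, being defined directly by its norm rather than as a completion, needs no density argument---whereas the paper dispatches these cases with a single ``analogously.''
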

\begin{proof}
The spaces $\CRci(\grp)$ and $\CRciper\bp{\R^3\times[0,\per]}$ are dense in the Sobolev spaces $\WSR{k}{q}(\grp)$ and $\WSRper{k}{q}\bp{\R^3\times(0,\per)}$, respectively. 
By construction of the differentiable structure on $\grp$,
lifting by $\bijection$ is a homeomorphism between $\bp{\CRci(\grp),\norm{\cdot}_{k,q}}$ and the space $\bp{\CRciper\bp{\R^3\times[0,\per]},\norm{\cdot}_{k,q}}$.
It follows that this mapping extends to a homeomorphism
between $\WSR{k}{q}(\grp)$ and $\WSRper{k}{q}\bp{\R^3\times(0,\per)}$. The other statements follow analogously.
\end{proof}

\begin{proof}[Proof of Theorem \ref{ExistenceAndUniquenessThm}]
It is easy to verify that lifting by $\bijection$ is a homeomorphism between $\LR{q}(\grp)$ and $\LR{q}\bp{\R^3\times(0,\per)}$ with 
$\norm{f\circ\bijectioninv}_{q,\grp}=\per^{-\frac{1}{q}}\norm{f}_{q,\R^3\times(0,\per)}$.
We thus choose $\const{ExistenceAndUniquenessThmConst}\leq \bp{\per^{-\frac{1}{q}}+\per^{-\frac{1}{r}}}^{-1}\const{ss_StrongSolThmEps}$, where 
$\const{ss_StrongSolThmEps}$ is the constant from Theorem \ref{ss_StrongSolThm}.
Consider now a vector field $f\in\LR{q}\bp{\R^3\times(0,\per)}^3\cap\LR{r}\bp{\R^3\times(0,\per)}^3$ satisfying \eqref{intro_timeperiodicdata} and \eqref{ExistenceAndUniquenessThmDataCond}. 
Then $\tf:=f\circ\bijectioninv$ satisfies \eqref{ss_StrongSolThmDataCond}, whence there exists, by Theorem \ref{ss_StrongSolThm}, a 
solution $(\tuvel,\tupres)\in\LRloc{1}(\grp)^3\times\LRloc{1}(\grp)$ to \eqref{ss_nsongrp} in the class \eqref{ss_StrongSolThmSolSpace} (with $\tuvel=\tvvel+\twvel$).
Letting $\uvel:=\tuvel\circ\quotientmap$ and $\upres:=\tupres\circ\quotientmap$, we deduce from Lemma \ref{quotientmapLiftingProps} and Lemma \ref{bijectionHomeomorphism}
that $(\uvel,\upres)$ is a solution to \eqref{intro_nspastbodywholespace}--\eqref{intro_timeperiodicsolution} in the class \eqref{ExistenceAndUniquenessThmSolSpace}.
By Lemma \ref{quotientmapLiftingProps} we further see that a vector field $\weakuvel$ is a weak solution in the sense of Definition \ref{UniquenessClassDef} corresponding to 
$f$ if and only if $\tweakuvel:=\weakuvel\circ\bijectioninv$ is a weak solution in the sense of Definition \ref{ss_UniquenessClassDef} corresponding to $\tf$.
Consequently, uniqueness of $\uvel$ in the class of physically reasonable weak solutions follows from Theorem \ref{ss_UniquenessThm}. Finally, we obtain directly
from Theorem \ref{ss_EnergyEqThm} that $\uvel$ satisfies the energy equality \eqref{EnergyEqEE}.
\end{proof}

\begin{proof}[Proof of Theorem \ref{RegularityThm}]
Follows directly from Theorem \ref{ss_RegularityThm} in combination with Lemma \ref{quotientmapLiftingProps} and Lemma \ref{bijectionHomeomorphism}.
\end{proof}

\begin{proof}[Proof of Corollary \ref{RegularitySmoothnessCor}]
The corollary follows from Theorem \ref{RegularityThm} by a standard localization argument combined with 
classical Sobolev embedding.
\end{proof}

\bibliographystyle{abbrv}

\end{document}